\newtheorem{theorem}{Theorem}[section]
\newtheorem{proposition}[theorem]{Proposition}
\newtheorem{corollary}[theorem]{Corollary}
\newtheorem{lemma}[theorem]{Lemma}
\newtheorem{claim}[theorem]{Claim}
\theoremstyle{remark}
\theoremstyle{definition}
\newcommand{\Z}{\mathbb{Z}}
\newcommand{\A}{\mathcal{A}}
\newcommand{\Nz}{\mathbb{N}_0}
\newcommand{\N}{\mathbb{N}}
\newcommand{\As}{\mathbf{A}}
\newcommand{\ver}{\mathcal{V}}
\newcommand{\pinch}{\mathcal{V}^{\bullet}}
\newcommand{\aleq}{\leq^\circ}
\newcommand{\ageq}{\geq^\circ}
\newcommand{\lsh}{$(L)$}
\newcommand{\ssh}{$(S)$}
\renewcommand{\aa}{\mathbf{a}}
\newcommand{\bb}{\mathbf{b}}
\newcommand{\jj}{\mathbf{j}}
\newcommand{\pp}{\mathbf{p}}
\newcommand{\qq}{\mathbf{q}}
\renewcommand{\ss}{\mathbf{s}}
\newcommand{\ttt}{\mathbf{t}}
\newcommand{\uu}{\mathbf{u}}
\newcommand{\vv}{\mathbf{v}}
\newcommand{\xx}{\mathbf{x}}
\newcommand{\yy}{\mathbf{y}}
\newcommand{\zz}{\mathbf{z}}
\newcommand{\OO}{\mathbf{0}}
\newcommand{\Aall}{A^{all}}
\renewcommand{\k}{\Bbbk}
\newcommand{\heading}[1]{\medskip\par\noindent{\bf #1}}
\DeclareMathOperator{\reg}{reg}
\DeclareMathOperator{\rk}{rk}
\newcommand{\covers}{{\ > \kern-4mm \cdot \kern+2mm\ }}
\newcommand{\covered}{{\ < \kern-2mm \cdot\ }}
\newcommand{\hull}[1]{\langle #1 \rangle}
\title{Shellability of the higher pinched Veronese posets}
\author{
Martin Tancer\thanks{
  Institutionen f\"{o}r matematik, Kungliga Tekniska H\"{o}gskolan, 100~44
  Stockholm. Supported by the G\"{o}ran Gustafsson postdoctoral fellowship.
}
}
\begin{document}
\date{}

\maketitle


\begin{abstract}
 The pinched Veronese poset $\pinch_n$ is the poset with ground set consisting
 of all non-negative integer vectors of length $n$ such that the sum of their
 coordinates is divisible by $n$ with exception of the vector $(1,\dots,1)$. 
 For two
 vectors $\aa$ and $\bb$ in $\pinch_n$ we have $\aa \preceq \bb$ if and only if
 $\bb - \aa$ belongs to the ground set of $\pinch_n$.
 We show that every interval in $\pinch_n$ is shellable for $n \geq 4$.  

 In order to obtain the result, we develop a new method for showing that a
 poset is shellable. This method differs from classical lexicographic
 shellability.

 Shellability of intervals in $\pinch_n$ has consequences in
 commutative algebra. As a corollary we obtain a combinatorial proof of 
 the fact that the pinched Veronese ring is Koszul for $n \geq 4$.
 (This also follows from a result by Conca, Herzog, Trung and Valla.)
\end{abstract}

\section{Introduction}
In this paper we focus on the following question: Is every interval in the
pinched Veronese poset shellable? (Cohen-Macaulay?) Let us explain this
question and its background in detail.

By the $m$-th \emph{Veronese poset with on $n$ generators}, denoted as
$(\ver_{m,n}, \leq)$, we mean the following poset. 
Its ground set consists of non-negative integer vectors of length $n$ 
such the sum of their coordinates is divisible by $m$. The partial order on
$\ver_{m,n}$ is given so that $\aa \leq \bb$ if and only if $\aa$ is less or
equal to $\bb$ in each coordinate. It is not hard to see that every interval in 
$\ver_{m,n}$ is shellable and therefore Cohen-Macaulay.

If we set $m = n$, we just speak of the $n$-th \emph{Veronese poset}
$\ver_{n} := \ver_{n,n}$. We can pinch this poset in the following way. We remove
the distinguished vector $\jj$ which contains $1$ in each coordinate. We also
remove order relations between vectors that differ exactly by $\jj$ (making
them incomparable). In this way we thus obtain the $n$-th \emph{pinched Veronese
poset} $(\pinch_n,\preceq)$; see Figure~\ref{f:621}. (More details on this poset are discussed in
Section~\ref{s:Veronese}.)  It is very interesting that removing this single
element $\jj$ (and the corresponding order relations) strongly influences
understanding the properties of the poset. 

\begin{figure}
\begin{center}
\includegraphics{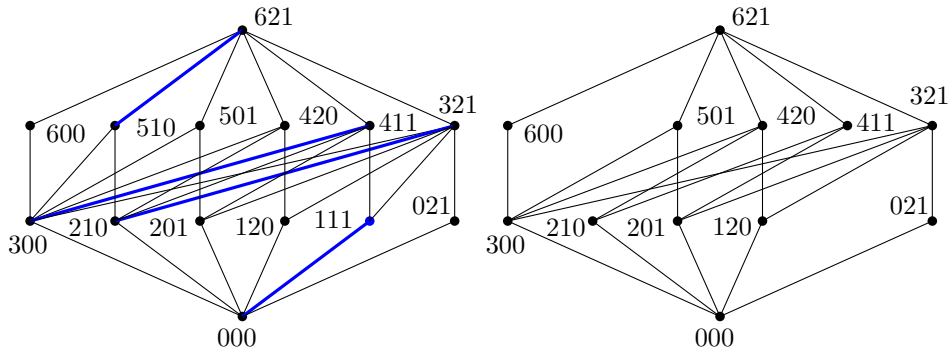}
\caption{An example of an interval in $\ver_3$ and $\pinch_3$. The edges that
have to be removed from $\ver_3$ in order to obtain $\pinch_3$ are emphasized
on the left.}
\label{f:621}
\end{center}
\end{figure}

On the algebraic side, it follows that the $n$-th pinched Veronese ring is
Koszul for $n \geq 4$ from a result by Conca, Herzog, Trung and
Valla~\cite{concaherzogtrungvalla97} (we will discuss this in more detail
below). This is equivalent to stating that every
interval in $\pinch_n$ is Cohen-Macaulay; see~\cite[Corollary
2.2]{peeva-reiner-sturmfels98}. Later on, Caviglia~\cite{caviglia09} showed that the
third pinched Veronese ring is Koszul. 
The methods used in~\cite{caviglia09} are based on computer calculations. Recently, a more general result was found by Caviglia and Conca~\cite{caviglia-conca13}
without the use of computer. 


Our task is to focus on the combinatorial side of this question. That is, we
focus on shellability of intervals in the pinched Veronese poset remarking that shellability
implies Cohen-Macaulayness. 
We also remark that Cohen-Macaulayness of a poset implies several deep intrinsic
properties of the poset. For example certain enumerative properties. The reader
is referred, for example, to~\cite{bjorner-garsia-stanley82} for more details
on Cohen-Macaulayness.

We develop a new method for showing that a certain
poset is shellable. Using this method, we are able to prove the following
theorem.

\begin{theorem}
\label{t:vn}
Let $n \geq 4$. For any $\zz \in \pinch_n$ the interval $[\OO,\zz]$ in
$\pinch_n$ is a shellable poset, where $\OO$ is the zero vector of length $n$.
\end{theorem}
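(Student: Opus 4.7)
The plan is to construct, for each $\zz \in \pinch_n$, an explicit linear order on the maximal chains of $[\OO,\zz]$ and to verify directly that it is a shelling of the order complex, bypassing edge labels. The classical EL-shelling of the ambient Veronese poset $\ver_n$ labels each cover $\aa \covers \bb$ by the atom $\bb-\aa$ in lex order, but this does not descend to $\pinch_n$: removing $\jj$ deletes expected intermediate elements (for instance, $[\aa,\aa+2\jj]$ has no intermediate $\aa+\jj$ at all) and leaves several incomparable ``increasing'' chains in other rank-two intervals. Because Theorem~\ref{t:vn} is advertised as requiring a method different from lexicographic shellability, abandoning edge labels in favor of a hand-crafted chain order is the natural route.

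To implement this, I would encode every maximal chain $C = (\OO = \cc_0 \prec \cdots \prec \cc_k = \zz)$ by its atom sequence $\sigma(C) = (\aa_1,\ldots,\aa_k)$, where $\aa_i = \cc_i - \cc_{i-1}$ is a non-$\jj$ atom, so that $\sigma(C)$ is an ordered factorization of $\zz$ whose partial sums avoid $\jj$. Fix a total order on atoms, for concreteness lex on coordinate vectors. Call an adjacent transposition of $(\aa_i,\aa_{i+1})$ \emph{admissible} at position $i$ if the new intermediate $\cc_{i-1} + \aa_{i+1}$ lies in $\pinch_n \cup \{\OO\}$, i.e.\ is not equal to $\jj$. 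Admissibility generates an equivalence relation on maximal chains and a partial order on each class (the restriction of the symmetric-group weak order). The proposed linear order on chains refines lex order on $\sigma(C)$ subject to admissibility: every non-minimal chain should admit an \emph{admissible descent}, a single admissible transposition producing a lex-smaller sequence. For each such $C$, define the restriction set $R(C)\subseteq C$ as the set of intermediate vertices $\cc_i$ at which an admissible descent is available; the shelling identity to verify is that $\langle C \rangle$ intersects the union of earlier closed facets exactly in the subcomplex of $\partial\langle C \rangle$ generated by the faces $\langle C\rangle \setminus \{\cc_i\}$ with $\cc_i \in R(C)$.

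The principal obstacle, and presumably the reason a new method is needed, is the existence of chains for which no adjacent admissible descent exists because every candidate swap is blocked by a $\jj$-intermediate. To repair this, the chain order must be tuned so that either the offending chain already sits early in the order, or some longer composite of admissible swaps simulates the missing descent, producing an earlier chain that shares the required codimension-one face. The heart of the argument should be a local analysis of rank-two intervals whose link contains $\jj$: one must show that for $n \geq 4$ there are always enough atoms with suitable support to reroute around $\jj$, a property that fails for $n = 3$ where atoms near $\jj$ are too constrained, explaining the separate treatment of that case in the literature. Establishing the purity of the shelling intersection together with the simultaneous coherence of all these local reroutings is where the bulk of the combinatorial work should lie.
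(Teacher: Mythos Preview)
Your proposal is a sketch rather than a proof: the chain order is not fully determined (``refines lex order subject to admissibility'' leaves it underspecified), the rerouting mechanism is left open, and you explicitly defer the ``bulk of the combinatorial work.'' More seriously, the central obstruction is misdiagnosed. The admissibility constraint you place on a swap $(\aa_i,\aa_{i+1})\mapsto(\aa_{i+1},\aa_i)$ is vacuous in $\pinch_n$: the new intermediate $\cc_{i-1}+\aa_{i+1}$ has coordinate sum $in$, so it can equal $\jj$ only when $i=1$, in which case it is $\aa_2\neq\jj$; and the required cover relations hold automatically because the differences $\aa_i,\aa_{i+1}$ are already $\neq\jj$. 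Hence no swap is ever blocked, and within a fixed multiset of atoms the sorted sequence is always reachable by swaps. The real difficulty --- which you note early on but then abandon --- is comparing \emph{across} multisets: there are many sorted atom sequences summing to $\zz$ (the unique $\ver_n$-increasing decomposition may use $\jj$), and when a sorted chain $C$ is lex-later than another sorted chain $C'$, the one-level move required by (Sh) must be a non-swap replacement $(\aa_j,\aa_{j+1})\to(\bb,\aa_j+\aa_{j+1}-\bb)$ with $\bb<^L\aa_j$ and both new atoms $\neq\jj$, at a position $j$ with $\cc_j\notin C'$. You supply no argument that such $\bb$ exists, and this is exactly where the case $n=3$ would bite.

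The paper's argument is structurally different and does not attempt a single global chain order. It introduces $A$-shellability (shellings respecting a prescribed order on the bottom atom) together with three transfer criteria (Theorems~\ref{t:ci}--\ref{t:ciii}) for enlarging or restricting the atom set $A$. Theorem~\ref{t:vn} is then obtained by a double induction: outer on $\rk(\zz)$, inner over initial segments of the atoms under two hand-tuned linear orders $<^L$ and $<^S$. The technical core is a structural description of $Q=I\hull{A^+}\setminus I\hull{A}$ when one atom $\aa^+$ is adjoined (Claims~\ref{c:QLk} and~\ref{c:QSk}), which feeds the edge-falling hypothesis of Theorem~\ref{t:ci}; the auxiliary order $<^S$, which displaces the single atom $1\cdots102$, is precisely the kind of global adjustment your plan anticipates but does not carry out.
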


Note that we do not lose anything by considering intervals $[\OO, \zz]$ only,
since an interval $[\aa,\bb]$ is isomorphic to $[\OO, \bb - \aa]$.


Our motivation for proving Theorem~\ref{t:vn} can be seen from two sides. On
one hand, the pinched Veronese poset is an interesting poset from a
combinatorial point of view and it is interesting to understand its
combinatorial properties. Especially, if its combinatorial properties have
further consequences in commutative algebra (see the text at the end of this
section).

On the other hand, Theorem~\ref{t:vn} can be seen as a testing example for a
new method for showing that a certain poset is shellable. We establish
inductive criteria showing that a certain poset $P$ is shellable assuming that
several subposets of $P$ are shellable and that $P$ satisfy few other
properties. Let us remark that, in general, our method differs from a very
standard tool which is lexicographic shellability.

A small drawback of our method is that it requires quite 
technical case analysis checking that all inductive criteria are satisfied. In
this part, the main message for the reader is that the analysis can be done
(still, it is fully included in the paper).

\heading{The third pinched Veronese poset.} 
The reader might wonder what is the importance of our assumption $n \geq 4$ in
Theorem~\ref{t:vn}. The case $n=1$ does not make sense. The case $n= 2$ makes the most
sense (in relation to the algebraic side of the question) if the elements
$(\alpha_1,\alpha_2)$ are further removed from the poset whenever $\alpha_1$
and $\alpha_2$ are odd. However, in this case $\pinch_2$ is isomorphic to
$\ver_{1,2}$.

The only real issue occurs when $n = 3$. In this case, our method, as stated in
section~\ref{s:crit}, does not suffice to prove shellability of $\pinch_3$. In
fact, it is possible to show that some intervals in $\pinch_3$ are not
lexicographically shellable. It turns out that the reason why some intervals in
$\pinch_3$ are not lexicographically shellable also implies limitations for our
method. Maybe a further improvement of our method might
yield a solution for $n = 3$.

\heading{More detailed relation to commutative algebra.}
Let us fix an integer $n$ and consider a subset $\A$ of $\N_0^n$. For
simplicity we assume that the sum of the coordinates of all vectors in $\A$
equals a fixed integer $m$. Given a commutative field $\k$ we consider the
ring $\k[\A]$ as a subring of $\k[x_1, \dots, x_n]$ generated by all monomials
$x^{\aa}$ for $\aa \in \A$ where $x^{\aa} = x_1^{a_1}\cdots x_n^{a_n}$ if $\aa
= (a_1, \dots, a_n)$.

We can also associate a poset $P(\A)$ to $\A$ in the following way. We let
$\Lambda$ to consist of those vectors in $\N_0^n$ that are non-negative
integer combinations of vectors from $\A$ (including zero). 
Then we set $P(\A) = (\Lambda, \leq_{\A})$ where $\aa \leq_{\A} \bb$ if and
only if $\bb - \aa \in \Lambda$.

Cohen-Macaulayness of intervals in $P(\A)$ is related to the Koszul property
of $\k[\A]$ in the following way.

\begin{proposition}[{\cite[Corollary 2.2]{peeva-reiner-sturmfels98}}]
  \label{p:prs}
  The ring $\k[\A]$ is Koszul if and only if every interval in $P(\A)$ is
  Cohen-Macaulay over $\k$. 
\end{proposition}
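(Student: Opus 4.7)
The plan is to prove both directions of the equivalence simultaneously by translating the Koszul property and the Cohen--Macaulay property of intervals into the same statement about the simplicial homology of order complexes of open intervals in $P(\A)$.

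First, I would set up a multigraded Tor computation. Since $\k[\A]$ is a semigroup ring graded by $\Lambda$, the groups $\mathrm{Tor}^{\k[\A]}_i(\k,\k)$ split as a direct sum over $\bb\in\Lambda$ of multigraded pieces $\mathrm{Tor}^{\k[\A]}_i(\k,\k)_\bb$. The key technical input, which I would invoke in the form of the Laudal--Sletsj\o e formula (or prove directly by reducing the bar complex), is the natural isomorphism
\[
  \mathrm{Tor}^{\k[\A]}_i(\k,\k)_\bb \;\cong\; \tilde H_{i-2}\bigl(\Delta((\OO,\bb));\,\k\bigr),
\]
where $\Delta((\OO,\bb))$ denotes the order complex of the open interval $(\OO,\bb)$ in $P(\A)$.

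Next, I would translate both sides of the proposition into vanishing statements for these homology groups. By the standing assumption that every generator in $\A$ has coordinate sum $m$, the ring $\k[\A]$ is standard graded with the internal degree of $x^{\bb}$ equal to $|\bb|/m$ (the coordinate sum divided by $m$). Koszulness is then equivalent to $\mathrm{Tor}^{\k[\A]}_i(\k,\k)_\bb=0$ whenever $i\neq|\bb|/m$, which by the Tor formula above becomes: for every $\bb\in\Lambda$, the reduced homology of $\Delta((\OO,\bb))$ is concentrated in degree $|\bb|/m-2$. On the combinatorial side, the covering relations of $P(\A)$ are precisely additions of a single element of $\A$, so every maximal chain in $[\OO,\bb]$ has length $|\bb|/m$ and the top dimension of $\Delta((\OO,\bb))$ is $|\bb|/m-2$. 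Since every interval $[\aa,\cc]$ in $P(\A)$ is isomorphic to $[\OO,\cc-\aa]$, the statement ``every interval $[\OO,\bb]$ is Cohen--Macaulay over $\k$'' is equivalent to requiring, for every $\bb'\in\Lambda$, that $\tilde H_{*}(\Delta((\OO,\bb'));\k)$ be concentrated in top dimension $|\bb'|/m-2$. This coincides exactly with the translated Koszul condition, giving the equivalence.

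The main obstacle is to establish the Tor formula. I would take the reduced bar resolution of $\k$ over $\k[\A]$ (or its minimal multigraded replacement), observe that its strand in multidegree $\bb$ has a basis indexed by chains $\OO \prec \aa_1 \prec \cdots \prec \aa_k \prec \bb$ in $P(\A)$, and identify this, after a dimension shift by two, with the augmented simplicial chain complex of $\Delta((\OO,\bb))$; matching the differentials is then a routine sign check. Everything else in the argument is a bookkeeping translation between the algebraic grading and the combinatorial rank function of $P(\A)$.
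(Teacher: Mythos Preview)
The paper does not give its own proof of this proposition: it is quoted verbatim as Corollary~2.2 of Peeva--Reiner--Sturmfels and used as a black box to derive Corollary~\ref{c:koszul}. So there is nothing in the paper to compare your argument against.

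That said, your outline is essentially the standard proof and is correct. The key identity
\[
  \mathrm{Tor}^{\k[\A]}_i(\k,\k)_\bb \;\cong\; \tilde H_{i-2}\bigl(\Delta((\OO,\bb));\k\bigr)
\]
is exactly the Laudal--Sletsj\o e / bar-resolution computation you describe, and the standing hypothesis that every element of $\A$ has coordinate sum $m$ is precisely what makes $P(\A)$ graded with rank function $|\bb|/m$, so that Koszulness becomes the vanishing condition you state. One point you pass over quickly but which is worth making explicit: Cohen--Macaulayness of a bounded graded poset is, by Reisner's criterion for order complexes, equivalent to the vanishing of $\tilde H_i(\Delta((x,y));\k)$ below top dimension for \emph{all} open subintervals $(x,y)$, not just the full open interval. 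Your argument handles this correctly via the translation $[\aa,\cc]\cong[\OO,\cc-\aa]$ together with the quantification over all $\bb'\in\Lambda$, but it would be clearer to say so in one sentence rather than leaving it implicit.
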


The reader is referred, for example, to~\cite{froberg99} for more information
about the importance of the Koszul property. 

If we set $\A_{m,n}$ to consist of all vectors in $\N^n_0$ whose coordinates
sum to $m$ we get $P(\A_{m,n}) = \ver_{m,n}$. Similarly, if we set
$\A^\bullet_n$ to $\A_{n,n} \setminus \{\jj\}$, we get $P(\A^\bullet_n) =
\pinch_n$. Thus, we have the following corollary of Theorem~\ref{t:vn} and
Proposition~\ref{p:prs}. 

\begin{corollary}
\label{c:koszul}
  The ring $\k[\A^\bullet_n]$ is Koszul for any $n \geq 4$.
\end{corollary}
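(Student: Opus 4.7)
The plan is to combine Theorem~\ref{t:vn} with Proposition~\ref{p:prs} via the standard implication that shellability entails Cohen-Macaulayness over every field. Since the setup preceding the statement identifies $P(\A^\bullet_n)$ with $\pinch_n$, it suffices to establish that every interval in $\pinch_n$ is Cohen-Macaulay over each field $\k$.

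First I would upgrade Theorem~\ref{t:vn} from principal intervals $[\OO,\zz]$ to arbitrary intervals $[\aa,\bb]$. This is the content of the parenthetical remark immediately following the theorem: the translation $\cc \mapsto \cc - \aa$ is a poset isomorphism from $[\aa,\bb]$ onto $[\OO, \bb - \aa]$, because by the definition of $\pinch_n$ the order relation between two vectors depends only on their difference. Consequently $[\aa,\bb]$ is shellable whenever $[\OO, \bb-\aa]$ is, and Theorem~\ref{t:vn} supplies the latter for all $n \geq 4$.

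Finally I would invoke the well-known fact (for instance via Reisner's criterion applied to the order complex of the open interval) that a shellable bounded poset is Cohen-Macaulay over every field. Together with Proposition~\ref{p:prs} this yields that $\k[\A^\bullet_n]$ is Koszul for $n \geq 4$. All the substantial work sits in Theorem~\ref{t:vn}; there is no real obstacle in the corollary itself, which is merely a routine chain of implications---the only point requiring a brief word is the reduction from principal to arbitrary intervals via translation.
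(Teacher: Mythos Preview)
Your proposal is correct and matches the paper's own justification: the corollary is stated simply as a consequence of Theorem~\ref{t:vn} and Proposition~\ref{p:prs}, using the translation isomorphism $[\aa,\bb]\cong[\OO,\bb-\aa]$ and the standard implication that shellability yields Cohen-Macaulayness over any field. There is nothing more to add.
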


As we mentioned above, Corollary~\ref{c:koszul} also follows from the result of
Conca et al~\cite{concaherzogtrungvalla97}, thus our contribution for algebraic
side is a combinatorial proof of this corollary.

For completeness, we explain how to derive Corollary~\ref{c:koszul} from
Corollary~6.10~(2) in~\cite{concaherzogtrungvalla97}. We set $I$ to be the
ideal $(x_1^2, \dots, x_n^2)$ in $\k[x_1,\dots,x_n]$. It is generated by a regular
sequence since $x_i^2$ is a non-zero divisor in
$\k[x_1,\dots,x_n]/(x_1^2,\dots,x_{i-1}^2)$. Setting $d = 2$, $e = 1$, $c = n-2$ and $r = n$ in Corollary~6.10~(2)
from~\cite{concaherzogtrungvalla97} we get that $\k[I_n]$ is Koszul where
$\k[I_n]$ is generated by all monomials of degree $n$ belonging to $I$; that
is, $\k[I_n] = \k[\A^\bullet_n]$. 

\medskip
Very recently Vu~\cite{vu13arXiv} proved a general result that for $m,n \geq 2$
and $\xx \in \A_{m,n}$ the ring $\k[\A_{m,n} \setminus \{\xx\}]$ is Koszul
unless $m \geq 3$ and $\xx$ is $(0,\dots,0,2,m-2)$ or one of its permutations
(this result also includes Corollary~\ref{c:koszul}).

\heading{Further related work.} Here we very briefly mention further related
work. We keep several terms undefined in this paragraph. The reader is welcome
to consult the cited sources for more details.
Eisenbud, Reeves and Totaro~\cite{eisenbud-reeves-totaro94} showed that the
$m$th Veronese subrings of $\k[z_1,\dots,z_t]/I$ are Koszul where $I$ is
a homogeneous ideal and $m$ is large enough (more precisely when $m \geq
\reg(I)/2$ where $\reg(I)$ is Castelnuovo-Mumford regularity of $I$). Further
investigation of Koszulness of $\k[z_1,\dots,z_t]/I$ can be found
in~\cite{batzies-welker02,hersh-welker05,herzog-reiner-welker98,peeva-reiner-sturmfels98} in the context
where the generators $z_i$ correspond to monomials $x^{\aa}$ as above and $I$
records the syzygies between the monomials (and then $\k[z_1,\dots,z_t]/I
\simeq \k[\A]$).

\heading{Structure.}
In Section~\ref{s:crit} we explain our new method for showing shellability. In
Section~\ref{s:crit_proofs} we prove the correctness of the method.
Section~\ref{s:Veronese} serves as a preliminary section on properties of
the (pinched) Veronese posets. In Section~\ref{s:proof_vn} we prove
Theorem~\ref{t:vn}. Finally, in Section~\ref{s:lex} we compare the strength of
our shellability method (mainly) with standard chain-lexicographic shellability
of Bj\"{o}rner and Wachs~\cite{bjorner-wachs82}. If the reader is more
interested in the shellability criteria rather than Theorem~\ref{t:vn}, we
highly recommend to read Section~\ref{s:lex} right after
Section~\ref{s:crit}. Here we offer the graph of the dependency of the
sections:
\begin{center}
\includegraphics{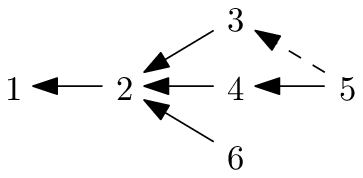}
\end{center}
The dashed arrow between Sections~\ref{s:crit_proofs} and~\ref{s:proof_vn}
means that Section~\ref{s:crit_proofs} is not necessary for understanding
Section~\ref{s:proof_vn}; however, the correctness of the proof in
Section~\ref{s:proof_vn} is based on Section~\ref{s:crit_proofs}.

\section{Method for showing shellability}
\label{s:crit}
In this section we describe our main tools for the proof of Theorem~\ref{t:vn}.
We need to set up some preliminaries first.

\heading{Poset preliminaries.}
Let $P = (P,\leq)$ be a graded poset with rank function $\rk$. By $\hat 0$ we
mean the unique minimal element of $P$ (if it exists) and similarly by $\hat 1$ we
mean the unique maximal element (if it exists). For $a, b \in P$ we say that $a$
\emph{covers} $b$, $a \covers b$, if $a > b$ and there is no $c$ with $a > c >
b$. Equivalently, $a > b$ and $\rk(a) = \rk(b) + 1$. Pairs of elements $a, b$
with $a \covers b$ are also known as \emph{edges} in the Hasse diagram of $P$.
\emph{Atoms} are elements that cover $\hat 0$. That is, atoms are elements of
rank $1$ in a poset that contains $\hat 0$. 

From now on, let us assume that $P$ contains a unique minimal element. 
Let $A$ be a set of some atoms in $P$. 
By $P\hull{A} = (P\hull{A}, \leq)$ we mean the induced subposet of $P$ with the ground set
$$
P\hull{A} = \{\hat 0\} \cup \{b \in P\colon b \geq a \hbox{ for some }a \in
A\}.
$$
%
%

\heading{Shellability.}
Now we assume that $P$ contains both a unique minimal and a unique maximal element. 
Let $C(P)$ be the set of maximal chains of $P$. 
A \emph{shelling order} is an order of chains from $C(P)$ satisfying the
following condition.

\begin{itemize}
  \item[(Sh)] If $c'$ and $c$ are two chains from $C(P)$ 
  such that $c'$ appears before $c$, then there is a chain $c^*$ from $C(P)$
  appearing before $c$ such that $c \cap c^* \supseteq c \cap c'$ and also 
  $c$ and $c^*$ differ in one level only (that is, $|c \Delta c^*| = 2$ where
  $\Delta$ denotes the symmetric difference).
\end{itemize}

A poset $P$ is \emph{shellable} if it admits a shelling order.
This is equivalent with saying that the order complex of $P$ is
shellable (as a simplicial complex).

\heading{$A$-shellability.}
Now let us assume that $A = (A, \aleq)$ is a partially ordered set of some atoms in $P$. We say
that $P\hull{A}$ is \emph{$A$-shellable} if $P\hull{A}$ is shellable with a shelling
order respecting the order on $A$. That is, if $c$ and $c'$ are two maximal
chains on $P\hull{A}$ and the unique atom of $c'$ appears before the unique atom
of $c$ in the $\aleq$ order, then $c'$ appears before
$c$ in the shelling.\footnote{For purposes of Theorem~\ref{t:vn}, it would be
fully sufficient to consider $\aleq$ as a linear order (a.k.a. total order).
However, we use partial orders, because nothing new has to be done to obtain more general
criteria with partial orders; and we believe that for some further applications
partial orders might be important.}

\heading{Using $A$-shellability.}
Let $P$ be a poset for which we aim to show that $P$ is shellable (in our
application $P = \pinch_n$). Let us order all the atoms of $P$ into a sequence 
$a_1, \dots, a_t$. For $k \in [t]$ let us set $A_k := \{a_1, \dots, a_k\}$ and
consider $A_k$ as a partially ordered set with the order $a_1 \aleq a_2 \aleq
\cdots \aleq a_k$. We would like to prove inductively that $P\hull{A_k}$ is
$A_k$-shellable. Let us assume that we are able to perform the first induction
step, that is, to show $A_1$-shellability of $P\hull{A_1}$ and let us focus on
the second induction step. We will provide two criteria, Theorems~\ref{t:ci}
and~\ref{t:cii} below, how to prove $A_{k+1}$-shellability of $P\hull{A_{k+1}}$
assuming $A_{k}$-shellability of $P\hull{A_k}$. 

This technique is quite similar to the technique using \emph{recursive atom
orderings} defined by Bj\"{o}rner and Wachs~\cite{bjorner-wachs83} and a
comparison of these two techniques is discussed in Section~\ref{s:lex}. In particular, the second criterion
(Theorem~\ref{t:cii}) is set up in such a way that it covers the case of
recursive atom orderings.
However, the technique presented here allows more freedom. In particular it
allows to combine different criteria to achieve the task.

One technical issue is the following. In our application for the pinched Veronese posets, it is not enough to consider the induction steps along a single ordering $a_1
\aleq a_2 \aleq \cdots \aleq a_t$ of the atoms of $P$. If we aimed on a single
ordering only, we would not have strong enough induction assumption to achieve
the task. Thus we will rather focus on many orderings of the atoms. For
considering more orderings simultaneously, it pays off to set up a third
criterion, Theorem~\ref{t:ciii}, which allows to `restrict' an $A$-shelling of
$P\hull{A}$ to an $A'$-shelling of $P\hull{A'}$ where $A'$ is a subset of $A$. 

\heading{Necessity of the criteria.} In our approach, the first criterion,
Theorem~\ref{t:ci}, seems to be the most important. The remaining two theorems
could, perhaps, be circumvented; however, they will simplify our analysis.



\medskip

\heading{Setting up the criteria.}
To set up conditions in the criteria, we need some additional notation. 
We fix some partially ordered set $A = (A, \aleq)$ of atoms of $P$ and a
further atom $a^+$ which is not in $A$. Think of $A = A_k$ and $a^+ =
a_{k+1}$ when comparing with the sketch above (it is more convenient to use a
notation independent of the index $k$).

%
We set $A^+ := A \cup \{a^+\}$ and $Q := P\hull{A^+} \setminus
P\hull{A}$. The partial order on $A^+$, which we again denote by $\aleq$,
extends $\aleq$ on $A$ so that $a^+ \ageq a$ for any $a \in A$.
We also consider $Q = (Q, \leq)$ as a subposet of $P$ with the unique minimal element
$a^+$ (it does not need to have a unique maximal element).

For $q \in Q$, we set $I(q)$ to be the interval $[q, \hat 1]$. Elements of
$P$ that cover $q$ are atoms of $I(q)$. By $A(q)$ we denote the set of (all)
atoms of $I(q)$ which simultaneously belong to $P\hull{A}$. By $\Aall(q)$ we
denote the set of all atoms of $I(q)$.
In particular,
note that the poset $I(q)\hull{A(q)}$ is well defined (we will need this poset later
on).

%

\heading{Edge falling property.}
Let $q \in Q$. We say that $q$ has the \emph{edge falling} property if for
every $p \in P\hull{A}$ with $p \covers q$
and every $q' \in Q \cup \{\hat 0 \}$ with $q \covers q'$ there is $p' \in
P\hull{A}$ such that $p \covers p' \covers q'$. See
Figure~\ref{f:ef}.
\begin{figure}
\begin{center}
  \includegraphics{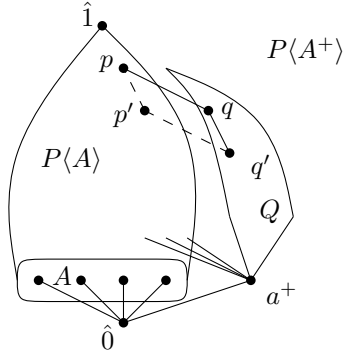}
\end{center}
\caption{The edge falling property. The $P\hull{A}$--$Q$ edge $pq$ falls by one
level to $p'q'$.}
\label{f:ef}
\end{figure}

%
%
%

%
%

\heading{Shellability criteria.} Now, we can state our first criterion; see also Figure~\ref{f:ci}.

\begin{theorem}[Criterion I]
\label{t:ci}
  The poset $P\hull{A^+}$ is $A^+$-shellable if the following conditions are
  satisfied.
  \begin{enumerate}[(i)]
    \item $P\hull{A}$ is $A$-shellable;
    \item for every $q \in Q$ the interval $[a^+,q]$ is shellable;
    \item every $q \in Q$ has the edge falling property; and
    \item for every $q \in Q$ the poset $I(q)\hull{A(q)}$ is shellable.
  \end{enumerate}
\end{theorem}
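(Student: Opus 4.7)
The plan is to concatenate an $A$-shelling of $\mathcal{C}_1 := C(P\hull{A})$ with a carefully constructed shelling of $\mathcal{C}_2 := C(P\hull{A^+}) \setminus \mathcal{C}_1$, using (ii) and (iv) to handle chains with a common ``last $Q$-element'' and (iii) to move between such groups. Since $a^+$ is $\aleq$-maximal in $A^+$, any $A^+$-shelling must list $\mathcal{C}_1$ (whose chains start at atoms of $A$) before $\mathcal{C}_2$ (whose chains start at $a^+$), and hypothesis (i) supplies the required order on the former block.

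For each $c \in \mathcal{C}_2$, let $\tau(c)$ denote the $\leq$-maximum element of $c \cap Q$. Because $P\hull{A}$ is upward-closed in $P\hull{A^+}$, the chain splits uniquely as $c = \{\hat{0}\} \cup c_{\mathrm{in}} \cup c_{\mathrm{ter}}$, where $c_{\mathrm{in}}$ is a maximal chain of $[a^+, \tau(c)]$ and $c_{\mathrm{ter}}$ is a maximal chain of $I(\tau(c))\hull{A(\tau(c))}$. I order $\mathcal{C}_2$ first by $\rk(\tau(c))$ (smaller first), then by an arbitrary linear tie-break among the $Q$-elements of equal rank, and finally, within a block where $\tau(c)=q$ is fixed, lexicographically by the pair $(c_{\mathrm{in}}, c_{\mathrm{ter}})$ using the shellings from (ii) and (iv).

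To verify (Sh) for a pair $c' < c$, I separate cases. The case $c, c' \in \mathcal{C}_1$ is immediate from (i). The intra-block case $\tau(c) = \tau(c') = q$ is handled by lifting whichever of $c_{\mathrm{in}}$, $c_{\mathrm{ter}}$ changes first: the shelling move on the corresponding component guaranteed by (ii) or (iv) produces $c^*$ that still lies in the block $\tau = q$, differs from $c$ in a single level, and satisfies $c \cap c^* \supseteq c \cap c'$ by a routine check. The remaining cross-block case, $\tau(c) \ne \tau(c')$ (which also covers $c' \in \mathcal{C}_1$), is where the edge-falling property (iii) enters: applied to the pair $p \covers \tau(c) \covers q'$ on $c$ -- where $p \in P\hull{A}$ is the element of $c$ just above $\tau(c)$ and $q' \in Q \cup \{\hat{0}\}$ is the one just below -- it produces $p^* \in P\hull{A}$ with $p \covers p^* \covers q'$, and replacing $\tau(c)$ by $p^*$ in $c$ yields the desired $c^*$. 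Either $c^* \in \mathcal{C}_1$ (when $\tau(c)=a^+$) or $\rk(\tau(c^*)) = \rk(\tau(c)) - 1$, so $c^* < c$ globally and $c^*$ differs from $c$ in exactly one level.

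The step requiring genuine care is why $c \cap c^* \supseteq c \cap c'$ in this cross-block situation. Since $c \cap c^* = c \setminus \{\tau(c)\}$, it suffices to show $\tau(c) \notin c'$. Suppose for contradiction that $\tau(c) \in c' \cap Q$: then $\tau(c') \geq \tau(c)$ by maximality of $\tau(c')$, whence $\rk(\tau(c')) \geq \rk(\tau(c))$; combined with $\rk(\tau(c')) \leq \rk(\tau(c))$ (forced by $c' < c$ in the cross-block case), we get $\tau(c') = \tau(c)$, contradicting the hypothesis. The entire argument is thus driven by the single observation that $\tau$ picks out the maximal $Q$-element of its chain, and the principal obstacle I anticipate is merely the bookkeeping of verifying that each constructed $c^*$ really lies in an earlier block and that its decomposition is compatible with the shelling moves supplied by (ii) and (iv).
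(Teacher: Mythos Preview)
Your proof is correct and follows essentially the same approach as the paper: the paper also concatenates the $A$-shelling of $C(P\hull{A})$ with an ordering of the remaining chains governed by their top $Q$-element (the paper's $q_{i(c)}$ is your $\tau(c)$), refines within each block lexicographically via the shellings from (ii) and (iv), and handles the cross-block case exactly by the edge-falling replacement you describe. The case split is organized slightly differently---the paper branches on whether $q_{i(c)}\in c'$ rather than on $\tau(c)\neq\tau(c')$---but these are equivalent and the verifications coincide.
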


\begin{figure}
\begin{center}
  \includegraphics{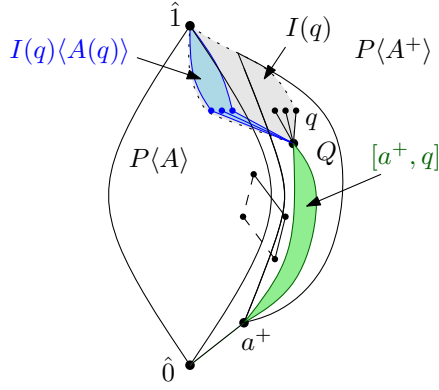}
\end{center}
\caption{Important subposets appearing in the conditions of Theorem~\ref{t:ci}. We
also recall the edge-falling property by a little diamond between $P\hull{A}$
and $Q$.}
\label{f:ci}
\end{figure}

%
%
%

The second criterion is similar to the first one; however, it focuses more on
the structure of the interval $I(a^+)$ rather than on the structure of $Q$. See
also Figure~\ref{f:cii}.

\begin{theorem}[Criterion II]
\label{t:cii}
  The poset $P\hull{A^+}$ is $A^+$-shellable if the following conditions are
  satisfied.
  \begin{enumerate}[(i)]
    \item $P\hull{A}$ is $A$-shellable;
    \item there is a linear order on $\Aall(a^+)$ such that the elements of
      $A(a^+)$
      appear before other elements in this order and such that $I(a^+) =
      I\hull{\Aall(a^+)}$ is
      $\Aall(a^+)$-shellable (with respect to this order); and
    \item for every $q \in Q$ and for every $p \in P\hull{A}$ if $p \covers q$,
      then $p \in I(a^+)\hull{A(a^+)}$.
  \end{enumerate}
\end{theorem}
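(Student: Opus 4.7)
The plan is to shell $P\hull{A^+}$ by concatenating two orderings: first, the $A$-shelling of $P\hull{A}$ granted by (i) (call this \emph{phase~1}); then, the $\Aall(a^+)$-shelling of $I(a^+)$ granted by (ii), with the initial segment $\hat 0 \covers a^+$ prepended to each of its chains (call this \emph{phase~2}). Phase-1 chains pass through atoms of $A$ while phase-2 chains pass through $a^+$, so this global order respects $\aleq$ on $A^+$ and restricts within each phase to a shelling of the relevant subposet. It then remains to verify the shelling condition (Sh) for every pair $c'$ preceding $c$.

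The two within-phase verifications are immediate: if both chains lie in phase~1, use (Sh) from (i); if both lie in phase~2, use (Sh) from (ii), since the common prefix $\hat 0 < a^+$ affects neither the intersection $c \cap c^*$ nor the symmetric difference $c \Delta c^*$. The substantive case is when $c'$ lies in phase~1 and $c$ in phase~2. Write $c$ as $\hat 0 < a^+ = z_0 < z_1 < \cdots < z_\ell = \hat 1$, and let $r = z_j$ be the smallest element of $c$ above $\hat 0$ that lies in $P\hull{A}$; such $r$ exists because $\hat 1 \in P\hull{A}$. Since elements of $c'$ above $\hat 0$ all lie in $P\hull{A}$ while $z_1,\dots,z_{j-1}$ do not, one obtains $c \cap c' \subseteq \{\hat 0, r, z_{j+1}, \dots, \hat 1\}$.

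I would now split on $j$. If $j = 1$, then $r = z_1 \in A(a^+)$; any atom $a' \in A$ with $a' \leq r$ exists (because $r \in P\hull{A}$), and $c^* := \hat 0 < a' < z_1 < \cdots < z_\ell$ is a phase-1 chain differing from $c$ only at level~$1$, with $c^* \cap c = c \setminus \{a^+\} \supseteq c \cap c'$. If $j \geq 2$, then $z_{j-1} \in Q$ and $r \covers z_{j-1}$ with $r \in P\hull{A}$, so condition (iii) forces $r \in I(a^+)\hull{A(a^+)}$, yielding an atom $r_0 \in A(a^+)$ with $r_0 \leq r$. Extend $a^+ \covers r_0$ to a maximal chain of $[r_0, r]$ in $P$ and splice it with the tail $r < z_{j+1} < \cdots < \hat 1$ of $c$ to obtain an auxiliary phase-2 chain $\tilde c$. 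Because $z_1 \notin A(a^+)$ while $r_0 \in A(a^+)$, the restriction $\tilde c|_{I(a^+)}$ precedes $c|_{I(a^+)}$ in the $\Aall(a^+)$-shelling, and a direct check gives $\tilde c \cap c \supseteq \{\hat 0, a^+, r, z_{j+1}, \dots, \hat 1\} \supseteq c \cap c'$. Invoking (Sh) inside the shelling of $I(a^+)$ then produces $c^*|_{I(a^+)}$ preceding $c|_{I(a^+)}$, differing from it at a single level, and with intersection containing the target; re-prepending $\hat 0 < a^+$ yields the required $c^*$ in phase~2.

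The main obstacle is the case $j \geq 2$: condition (iii) is precisely what is needed to drop from $r \in P\hull{A}$ to an atom $r_0 \in A(a^+)$, which in turn unlocks the shelling of $I(a^+)$ from (ii) so that a suitable $c^*$ can be extracted. Once this bridging step is in place, the remaining verifications are routine, and the concatenated order is an $A^+$-shelling of $P\hull{A^+}$.
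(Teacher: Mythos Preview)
Your proof is correct and follows essentially the same approach as the paper: concatenate the $A$-shelling of $P\hull{A}$ with the $\Aall(a^+)$-shelling of $I(a^+)$ (prepended by $\hat 0$), and verify (Sh) by the same case split---your $j=1$ versus $j\geq 2$ is exactly the paper's split on whether $c\cap A(a^+)$ is nonempty, and the constructions of $c^*$ in each case match. One small slip: you write ``$a^+ \covers r_0$'' where you mean $r_0 \covers a^+$.
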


\begin{figure}
\begin{center}
  \includegraphics{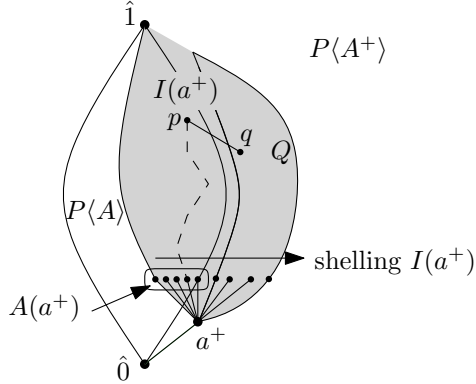}
\end{center}
\caption{Schematic drawing of the conditions of Theorem~\ref{t:cii}.}
\label{f:cii}
\end{figure}

The third criterion that we provide below differs from the previous two. In
this case we rather reduce $A$ to $A'$ instead of enlarging it.

\begin{theorem}[Criterion III]
\label{t:ciii}
  Let $A'$ be a subset of $A$, linearly ordered with the order inherited from
  $A$.  The poset $P\hull{A'}$ is $A'$-shellable if the following conditions
  are satisfied.
 \begin{enumerate}[(i)]
   \item $P\hull{A}$ is $A$-shellable; and
   \item for every $b \in A \setminus A'$ and for every $p \in P\hull{A'}$ with
     $p \covers b$, there is $b'$ appearing before $b$ in $A$ such that $b' \in
     A'$ and $p \covers b'$ (see Figure~\ref{f:ciii}).
 \end{enumerate}
\end{theorem}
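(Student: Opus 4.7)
The plan is to produce the $A'$-shelling of $P\hull{A'}$ simply by restricting the given $A$-shelling of $P\hull{A}$ to those maximal chains that happen to lie in $P\hull{A'}$. First I would record the observation that since $A' \subseteq A$, the maximal chains of $P\hull{A'}$ are exactly those maximal chains of $P\hull{A}$ whose unique atom lies in $A'$; the restriction is therefore a well-defined linear order on the maximal chains of $P\hull{A'}$ that automatically respects the inherited order on $A'$.

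The main task is to verify the shelling condition (Sh) inside this restricted order. Given two maximal chains $c'$ before $c$ in $P\hull{A'}$, condition (i) supplies a chain $c^*$ in $P\hull{A}$ that appears before $c$ in the $A$-shelling, with $c \cap c^* \supseteq c \cap c'$ and $|c \Delta c^*| = 2$. The easy case is when $c^* \in P\hull{A'}$; then nothing more needs to be done. Otherwise the atom of $c^*$ must be some $b \in A \setminus A'$, and because $c$ and $c^*$ agree on all but one element while having different atoms (one in $A'$, one not), they must differ precisely at their atoms, so they share a common tail from rank $2$ upwards whose bottom element $z$ lies in $P\hull{A'}$ and covers $b$.

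At this point I would invoke condition (ii) to produce $b' \in A'$ appearing before $b$ in $A$ with $z \covers b'$, and replace $c^*$ by the chain $c^{**}$ obtained from $c^*$ by swapping $b$ for $b'$. This $c^{**}$ lies in $P\hull{A'}$, still satisfies $c \cap c^{**} = c \cap c^*$ and $|c \Delta c^{**}| = 2$, and still appears before $c$ in the $A$-shelling because the shelling respects $\aleq$ and $b' \aleq b$ is strict, placing $c^{**}$ before $c^*$ and hence before $c$. So $c^{**}$ is the desired witness in $P\hull{A'}$.

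The only genuine obstacle I foresee is ruling out the degenerate case $b' = a_c$, where $a_c$ denotes the atom of $c$; in that case $c^{**}$ would coincide with $c$ and the argument would collapse. However, $b' = a_c$ would yield $a_c \aleq b$ strictly, and then the property that the $A$-shelling respects $\aleq$ would force $c$ to appear before $c^*$, contradicting our initial choice of $c^*$. Once this sanity check is in place the argument is essentially bookkeeping that transports the shelling witness from the ambient $A$-shelling back into $P\hull{A'}$.
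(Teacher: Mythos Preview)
Your proposal is correct and follows essentially the same approach as the paper: restrict the $A$-shelling to the chains of $P\hull{A'}$, and when the (Sh)-witness from the ambient shelling lands outside $P\hull{A'}$, observe that it must differ from $c$ at the atom level and use condition~(ii) to swap in an atom from $A'$. Your sanity check ruling out $b' = a_c$ is in fact already implicit in the chain of strict inequalities ``$c^{**}$ before $c^*$ before $c$'', so it is redundant but harmless.
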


\begin{figure}
\begin{center}
  \includegraphics{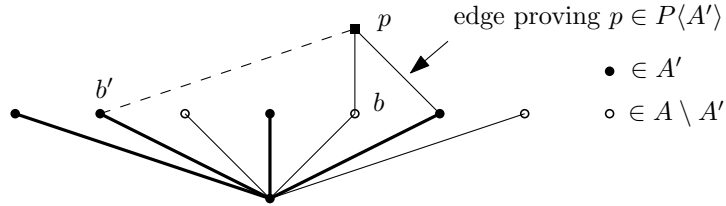}
\end{center}
\caption{Schematic drawing of condition (ii) of Theorem~\ref{t:ciii}.}
\label{f:ciii}
\end{figure}

The proofs of all three criteria are given in Section~\ref{s:crit_proofs}.

We conclude this section by remarks about the differences in the criteria
above and their comparison to lexicographic shellability.

\heading{Relation between Criterion I and Criterion II.}
A reader might check that Theorem~\ref{t:ci} `almost' follows from
Theorem~\ref{t:cii}. More precisely, it is not hard to see that conditions~(i)
and~(iii) of Theorem~\ref{t:cii} easily follow from the assumptions of
Theorem~\ref{t:ci}. The main difference is that condition~(ii) of
Theorem~\ref{t:cii} does not immediately follow from the assumptions of
Theorem~\ref{t:ci}. (Assuming that the conditions of Theorem~\ref{t:ci} are
satisfied, we can immediately deduce that $I(a^+)\hull{A(a^+)}$ is shellable by
setting $q = a^+$ in condition~(iv) of Theorem~\ref{t:ci}; however, we do not
have shelling of whole $I(a^+)$ yet).

Actually, the essence of the proof of Theorem~\ref{t:ci} can be seen as
verifying condition~(ii) of Theorem~\ref{t:cii} from conditions~(ii), (iii) and
(iv) of Theorem~\ref{t:ci}, which is solely a property of a certain
decomposition of the interval $I(a^+)$. The interested reader is welcome to
formulate the criteria on extension of a shelling of $I(a^+)\hull{A(a^+)}$ to a
shelling of whole $I(a^+)$ separately, following the proof of Theorem~\ref{t:ci}.


\heading{Relation of lexicographic shellability and $A$-shellability.}
A very standard notion for showing that a certain poset is shellable is the 
so-called (chain lexicographic) CL-shellability, introduced by Bj\"orner and
Wachs~\cite{bjorner-wachs82}, or even a further generalization, the so-called 
(chain compatible) CC-shellability introduced by Kozlov~\cite{kozlov97}, still induced
by a lexicographic order on chains. It is natural to ask
what is the relation between $A$-shellability introduced here and lexicographic
shellability (we will focus on CL-shellability only; some ideas can be carried
for CC-shellability as well). 
We discuss this relation in more detail in separate
Section~\ref{s:lex} and the reader interested in these details is encouraged to
read Section~\ref{s:lex} immediately (perhaps after finishing this section). Questions addressed in Section~\ref{s:lex} have 
arisen in discussions with Anders Bj\"{o}rner and Afshin Goodarzi. Here we
briefly survey these questions. 

It is not hard to see
that every lexicographically shellable poset is $A$-shellable where $A$ is set
of all atoms equipped with an appropriate linear order. On the other hand, it
is not hard to find an $A$-shellable poset (again with $A$ consisting of all
atoms) which is not lexicographically shellable.

We can also ask more subtle questions about the relative power of
Theorems~\ref{t:ci} and~\ref{t:cii} compared with lexicographic shellability.
(We skip Theorem~\ref{t:ciii} since it is of a different spirit.)

The conditions of Theorem~\ref{t:cii} are analogous to the conditions on recursive atom
orderings from~\cite{bjorner-wachs83}; and in particular Theorem~\ref{t:cii}
preserves lexicographic shellability (if the `shellable' assumptions are
changed into `lexicographically shellable') as well as lexicographically
shellable posets satisfy the conditions of Theorem~\ref{t:cii}. The added value of
Theorem~\ref{t:cii} appears when we use it with non-lexicographic assumptions.

Regarding Theorem~\ref{t:ci} let us (again) consider the following two questions: whether a lexicographically
shellable poset satisfies the criteria of Theorem~\ref{t:ci}; and whether
lexicographic shellability is kept by the criteria of Theorem~\ref{t:ci} (for linearly ordered $A$).

The answer to the first question is no. The answer to the second question is
not known to the author. We just remark that the proof of Theorem~\ref{t:ci}
might produce non-lexicographic shelling even if all posets in the conditions of
Theorem~\ref{t:ci} are assumed to be lexicographically shellable (not even a
CC-shelling). We again
refer to Section~\ref{s:lex} for more details.

The above-mentioned remarks suggest that $A$-shellability using
Theorem~\ref{t:ci} and lexicographic shellability are perhaps 
in `generic position' regarding applicability in various situations.

\section{Proofs of shellability criteria}
\label{s:crit_proofs}

Here we prove Theorems~\ref{t:ci},~\ref{t:cii}, and~\ref{t:ciii}. We keep the notation introduced in the previous section. 

Below we also set up an additional notation common to proofs of Theorems~\ref{t:ci}
and~\ref{t:cii}.
Let $C := C(P\hull{A})$ and $C^+ = C(P\hull{A^+})$ be the sets of maximal
chains in $P\hull{A}$ and $P\hull{A^+}$.
We know that $P\hull{A}$ is shellable, therefore there is some shelling 
order $c_1, c_2, \dots, c_t$ of all chains from $C$ (note that $P\hull{A}$
contains both $\hat 0$ and $\hat 1$). We are going to describe a shelling order
on $C^+$. In both cases, we start with $c_1, \dots, c_t$ and then we continue
with chains containing $a^+$. This way, if we show that we have a shelling order,
it will immediately be an $A^+$-shelling.

\subsection{Proof of Theorem~\ref{t:ci}}

We choose some order $q_1, \dots, q_u$ of elements of $Q$ such that
$i \leq j$ if $\rk(q_i) \leq \rk(q_j)$. In particular $q_1 = a^+$.
For every $q_i \in Q$ we have
an order of maximal chains in the interval $[a^+,q_i]$ inducing a shelling of
this interval, by condition~(ii).

Now we describe a shelling order of all maximal chains from $C^+\setminus C$.
(We already have an order on $C$.) Let
$c$ be a chain from $C^+ \setminus C$, the index $i(c)$ is denoted in such a way that
$q_{i(c)}$ is the element of $c \cap Q$ with the largest rank. Note that if
$r \in c$, $r \neq \hat 0$, and $\rk(r) < \rk(q_{i(c)})$, then $r \in Q$.

Now let $c$ and $c'$ be two different chains from $C^+ \setminus C$ and we want to
describe when $c'$ is before $c$.

The first criterion is whether $i(c') < i(c)$. That is, if $i(c') < i(c)$, then
$c'$ is sooner in the order than $c$ (and symmetrically $c'$ is later if $i(c')
> i(c)$); see Figure~\ref{f:i_order}, on the left.

\begin{figure}
\begin{center}
  \includegraphics{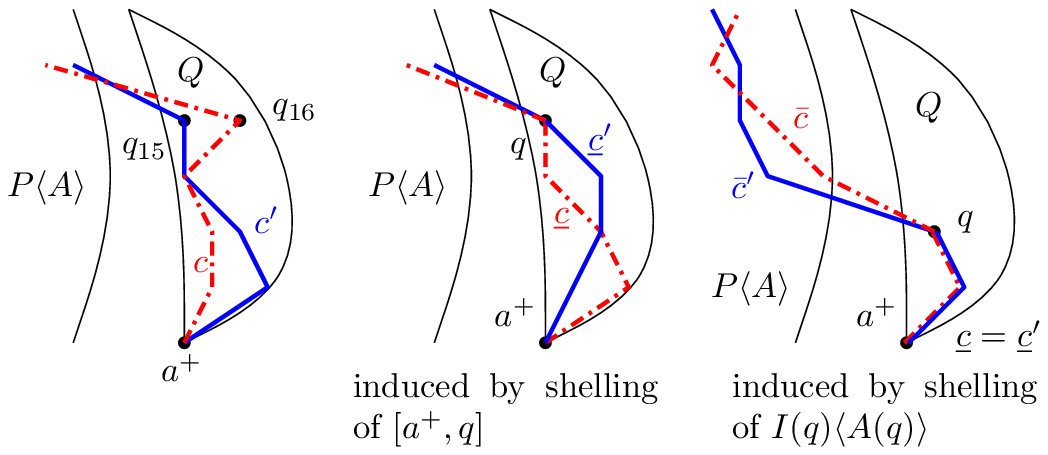}
\end{center}
\caption{Three cases when $c'$ appears before $c$.}
\label{f:i_order}
\end{figure}

If $i(c) = i(c')$, then we have the following second criterion. Let $q =
q_{i(c)} = q_{i(c')}$. We look at the
two maximal chains $\underline c = c \cap [a^+,q]$ and $\underline c' = c' \cap
[a^+,q]$ in
the interval $[a^+,q]$. As we sooner realized, if $\underline c \neq \underline c'$, then
there is order of these chains inducing a shelling on $[a^+,q]$. This induces
the order of $c$ and $c'$; see Figure~\ref{f:i_order}, in the middle. If $\underline c = \underline c'$, we need a third criterion.

Now we assume that $i(c) = i(c')$ and $\underline c = \underline c'$. The element
$q$ is defined as above. We set $\bar c = c \cap I(q)$ and $\bar c' = c'
\cap I(q)$ recalling that $I(q)$ is the interval $[q, \hat 1]$. Both chains
$\bar c$ and $\bar c'$ are maximal chains in $I(q)\hull{A(q)}$ due to the
choice of $q = q_{i(c)} = q_{i(c')}$. The condition (iv) in the statement of the theorem implies
that $I(q)\hull{A(q)}$ is shellable. We set
that $c'$ appears before $c$ in our shelling if and only if $\bar c'$ appears
before $\bar c$ in the shelling of $I(q)\hull{A(q)}$; see
Figure~\ref{f:i_order}, on the right.


%

\medskip

We have described an order of chains in $C^+$. Now we have to prove that it is
indeed a shelling order. That is we have to prove condition (Sh).
In the sequel we therefore assume that $c$ and $c'$ are given, as in (Sh), and we
seek for $c^*$. 

If $c \in C$, then we find required $c^*$ immediately from shellability of
$P\hull{A}$. In the sequel we assume $c \in C^+ \setminus C$ and we set $q =
q_{i(c)}$. We distinguish several cases.

\begin{enumerate}
%
  
  \item $q \not\in c'$.

    In this case we use the edge falling property. Let $q'$ be the element of
    $c$ such that $q \covers q'$ and $p$ be the element of $c$ such that $p
    \covers q$. The edge falling property 
implies that there is $p' \in P\hull{A}$ such that $p \covers p' \covers q'$. We set up $c^* = (c \cup
\{p'\}) \setminus \{q\}$. Obviously, $c^*$ satisfy the required properties.

  \item $q \in c'$, and $\underline c \neq \underline
    c'$ (where $\underline c = c \cap [a^+,q]$
    and $\underline c' = c' \cap [a^+,q]$).
  
    By their definition, $c'$ appears before $c$, thus due to the first
    criterion we have that $i(c') \leq i(c)$. Now since $q \in c'$, it follows
  that $i(c') = i(c)$ and therefore $q = q_{i(c')}$ (that is $q$ is
    the element of $c' \cap Q$ of the highest rank). In addition, due to the second criterion, we
    know that
  $\underline c'$ appears before $\underline c$ in the shelling of $[a^+,q]$. Therefore
  there is a maximal chain $\underline c^*$ in $[a^+,q]$ appearing before
  $\underline c$ 
  which coincides with $\underline c$ with exception of one level and such that
  $\underline
  c \cap \underline c^* \supseteq \underline c \cap \underline c'$. We set $c^*$ so
  that it coincides with $\underline c^*$ on $Q$ and with $c$ on $P\hull{A}$.

  \item $q \in c'$, and $\underline c = \underline c'$.

    We again have $q = q_{i(c')}$. Hence, the third criterion on comparison of $c$ and $c'$ applies.
   That is, $\bar c'$ appears before $\bar c$ in the shelling of
   $I(q)\hull{A(q)}$. Similarly as in the previous case, there is, therefore, a
   maximal chain $\bar c^*$ in $I(q)\hull{A(q)}$ appearing before $\bar c$
   which coincides with $\bar c$ with exception of one level and such
   that $\bar
         c \cap \bar c^* \supseteq \bar c \cap \bar c'$ (recall that $\bar c =
	 c \cap I(q)$ and $\bar c' = c' \cap I(q)$).
  We set $c^*$ so that it coincides with $c$ on $Q$ and with $\bar c^*$ on $P\hull{A}$.
\end{enumerate}

We have verified condition~(Sh) in all cases. This concludes the proof of
Theorem~\ref{t:ci}.

\subsection{Proof of Theorem~\ref{t:cii}}
In this case, it is easier to set up the order of shelling $C^+\setminus C$.
(Let us recall that the order on $C$ is already set up, and that the chains
from $C^+ \setminus C$ will follow after the chains from $C$.)

Every chain $c \in C^+ \setminus C$ contains $a^+$. Let $\bar c$ be in this case
$c \cap I(a^+)$. We set that $c'$ precedes $c$ if and only if $\bar c'$ precedes
$\bar c$ in the shelling from condition (ii) of the statement of the theorem.

Now, we need to verify condition (Sh) to be sure that we have indeed a shelling
order. Similarly as in the proof of previous theorem, we assume that $c$ and $c'$ are given, as in (Sh), and we
seek for $c^*$. We distinguish several cases.

\begin{enumerate}
  \item $c \in C$.
  
   In this case we know that $c'$ appears before $c$ and thus $c' \in C$.
   Therefore, we can find suitable $c^*$ from the shellability of $P\hull{A}$.

  \item $c \in C^+ \setminus C$ and $c' \in C^+ \setminus C$.

   In this case $\bar c'$ appears before $\bar c$, therefore, there is $\bar
   c^*$ from shelling of $I(a^+)$ such that $\bar c$ and $\bar c^*$ differ in one
   level only and that $\bar c^* \cap \bar c \supseteq \bar c' \cap \bar c$. We
   set $c^* = \bar c^* \cup \{\hat 0\}$. This choice of $c^*$ obviously satisfy
   the required properties.

  \item $c \in C^+ \setminus C$, $c' \in C$, and $c \cap A(a^+) \neq \emptyset$.

  Let $b \in c \cap A(a^+)$. Then there is $a \in A$ such that $b \covers a$
  due to the definition of $A(a^+)$. Let us set $c^* := (c \setminus \{a^+\}) \cup
  \{a\}$. Then $c^* \cap c \supseteq c' \cap c$ since $c'$ misses $a^+$. See
  Figure~\ref{f:ii_cstar}, on the left.

  \item $c \in C^+ \setminus C$, $c' \in C$, and $c \cap A(a^+) = \emptyset$.

    As usual, let $q$ be the largest element of $c \cap Q$. Let $p$ be the
    element of $c \cap P\hull{A}$ such that $p \covers q$. See
    Figure~\ref{f:ii_cstar}, on the right. Condition (iii) in
    the statement of the theorem implies that there is a maximal chain $
    c'_2$ in the interval $[a^+,p]$ such that $c'_2 \cap A(a^+) \neq \emptyset$.
    Let $\bar c'_2$ be the maximal chain in $I(a^+)$ which agrees with $c'_2$
    on $[a^+,p]$ and which agrees with $c$ on $[p,\hat 1]$. Note that $\bar
    c'_2$
    precedes $\bar c$ in the shelling of $I(a^+)$ since $\bar c'_2 \cap A(a^+) \neq
    \emptyset$ whereas $c \cap A(a^+) = \emptyset$. Therefore, by~(Sh), there is
    a chain $\bar c^*$ in $I(a^+)$ which agrees with $\bar c$ in all levels but
    one and which satisfies $\bar c^* \cap \bar c \supseteq \bar c'_2 \cap c$. In
    particular $\bar c^*$ agrees with $\bar c$ on $p$ and all elements above
    $p$. Now, we set $c^* := \bar c^* \cup \{\hat 0\}$. We have that $c^* \cap
    c \supseteq c' \cap c$ since $c' \cap c \subseteq P\hull{A}$.
\end{enumerate}

\begin{figure}
\begin{center}
  \includegraphics{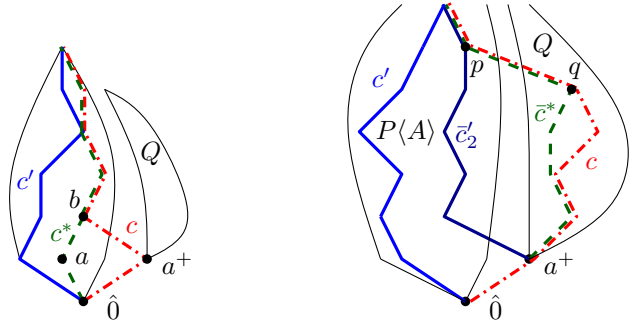}
\end{center}
\caption{Cases 3 and 4 in the proof of Theorem~\ref{t:cii}.}
\label{f:ii_cstar}
\end{figure}

This finishes the proof of Theorem~\ref{t:cii}.

\subsection{Proof of Theorem~\ref{t:ciii}}

Let $C = C(P\hull{A})$ and $C' = C(P\hull{A'})$ be the sets of maximal chains of
$P\hull{A}$ and $P\hull{A'}$. We have that $C' \subset C$. Since $P\hull{A}$ is
$A$-shellable, we have a shelling order on $C$ respecting $A$. We simply
inherit this order on $C'$. It respects $A'$; however, we have to show that it
is indeed a shelling order.

Let $c$ and $c'$ be chains in $P\hull{A'}$ such as in condition~(Sh). We look 
for a suitable $c^*$ from (Sh).

Chains $c$ and $c'$ also belong to $P\hull{A}$. Since we started with a
shelling on $C$, there is $c^{**} \in C$ such that $c^{**} \cap c \supseteq
c' \cap c$ and $c^{**}$ differs from $c$ in one level. If $c^{**}$ belongs to
$C'$, we set $c^* := c^{**}$ and we are done.

Now let us assume that $c^{**} \not \in C'$. Let $b$ and $p$ be the elements of
$c^{**}$ of rank 1 and 2 respectively, in particular $p \covers b$. Since
$c^{**} \notin C'$, it follows from the definition of $C'$ that $b \in A
\setminus A'$. Moreover, $c$ and $c^{**}$ differ in only one level. Therefore
they differ in level 1 and $p \in c$. This implies that $p \in P\hull{A'}$. By
applying now assumption~(ii) of the theorem for elements $b$ and $p$ we conclude
that there is $b' \in A'$ appearing before $b$ in $A$ such that $p \covers b'$. Let us set
$c^* := (c^{**} \setminus \{b\}) \cup \{b'\}$. Then $c^*$ appears before
$c^{**}$ in the shelling of $C$ and hence also before $c$. In addition $c^{*}$ and $c$ have to differ in level $1$
(only) by definition of $c^*$. Thus we
obtain $c^* \cap c = c^{**} \cap c \supseteq c' \cap c$ as required.


This finishes the proof of Theorem~\ref{t:ciii}.

\section{Preliminaries on the (pinched) Veronese poset}
\label{s:Veronese}

The $n$-th \emph{Veronese} poset $(\ver_n, \leq )$ is given by
$$
\ver_n = \{(\alpha_1, \dots, \alpha_n) \in \Nz^n \colon \alpha_1 + \cdots +
  \alpha_n \equiv 0
\pmod n\}
$$
and $\aa \leq \bb$ for $\aa = (\alpha_1, \dots, \alpha_n)$, $\bb = (\beta_1,
\dots, \beta_n)$ if
and only if $\alpha_i \leq \beta_i$ for $i \in [n]$. In the sequel, we often write $\aa =
\alpha_1\alpha_2\alpha_3$ instead of $\aa = (\alpha_1, \alpha_2, \alpha_3)$ and
so on for higher $n$. We can also use brackets to separate coordinates in
expressions such as $(\alpha_1 + 1)01\alpha_4$ instead of $(\alpha_1 + 1, 0, 1,
\alpha_4)$.

In slightly more general setting, for positive integers $m$ and $n$ we also
define
$$
\ver_{m,n} = \{(\alpha_1, \dots, \alpha_n) \in \Nz^n \colon \alpha_1 + \cdots +
     \alpha_n \equiv 0
   \pmod m\}.
$$
We again have that $\aa \leq \bb$ if $\aa$ is less or equal to $\bb$ in every
coordinate. In particular, we have $\ver_n = \ver_{n,n}$.

The $n$-th \emph{pinched Veronese} poset $(\pinch_n, \preceq)$ is a (non-induced) 
subposet of $\ver_n$ given by the following data.
$$
\pinch_n = \{\aa \in \ver_n \colon \aa \neq \jj\}.
$$
Here $\jj = 1\cdots 1$.
The partial order on $\pinch_n$ is given by $\aa \preceq \bb$ if $\aa \leq \bb$ and
$\bb - \aa \neq \jj$.

We also define $\OO = 0\cdots0$ to be the minimal element of $\pinch_n$.

\heading{Arithmetic operations on $\ver_n$ and $\pinch_n$.} We consider elements
of $\ver_n$ and $\pinch_n$ as vectors in $\Z^n$. We can then sum and subtract
these vectors. For a set $X \subseteq \Z^n$ and vector $\vv \in \Z^n$ we let $X
\oplus \vv$ to be the set $\{\xx + \vv \colon \xx \in X\}$. Similarly, $X
\ominus \vv := \{\xx - \vv\colon \xx \in X\}$. Let $[\OO,\zz]$ be
an interval in $\pinch_n$ and $\xx \in [\OO,\zz]$. In our considerations, we
will often use the fact that $[\xx, \zz]$ and $[\OO,\zz - \xx]$ are isomorphic;
more precisely, $[\OO,\zz - \xx] = [\xx,\zz] \ominus \xx$.

\heading{Shellability of intervals in $\ver_{m,n}$.}
It is not hard to observe, using known results, that every interval in
$\ver_{m,n}$ is shellable. We will actually need this for
considering the pinched version, thus we provide full details.

\begin{proposition}
  \label{p:non_pinch}
  Let $m$ and $n$ be positive integers. For any $\zz \in \ver_{m,n}$ the
  interval $[\OO,\zz]$ in $\ver_{m,n}$ is a shellable poset.
\end{proposition}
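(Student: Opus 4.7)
The plan is to deduce the proposition from two standard shellability facts by recognizing $[\OO,\zz]$ in $\ver_{m,n}$ as a rank-selected subposet of a distributive lattice. The starting observation is that the partial order on $\ver_{m,n}$ is just the componentwise order inherited from $\ver_{1,n} = \Nz^n$: if $\aa \leq \bb$ with $\aa, \bb \in \ver_{m,n}$, then $\bb - \aa$ automatically has coordinate sum divisible by $m$. Consequently, $[\OO,\zz]$ in $\ver_{m,n}$ embeds into $[\OO,\zz]$ in $\Nz^n$ exactly as the subposet of those elements whose coordinate sum is a multiple of $m$.

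First, I would note that $[\OO,\zz]$ in $\Nz^n$ is the product of chains $[0,z_1] \times \cdots \times [0,z_n]$, hence a finite distributive lattice, which is EL-shellable by a classical theorem of Bj\"{o}rner. Second, I would identify $[\OO,\zz] \subset \ver_{m,n}$ with the rank-selected subposet of $[\OO,\zz] \subset \Nz^n$ obtained by keeping $\OO$, $\zz$, and all elements whose coordinate sum is a positive multiple of $m$ strictly less than $\sum_i z_i$. The small bookkeeping step is to check that the cover relations in this rank selection (pairs of adjacent selected ranks with no intermediate element of selected rank) coincide with the cover relations in $\ver_{m,n}$ (pairs differing by a vector of coordinate sum $m$); this is immediate from the description of the ranks.

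Finally, invoking the classical theorem that rank selection preserves shellability of bounded pure posets (Bj\"{o}rner--Wachs) yields shellability of $[\OO,\zz]$ in $\ver_{m,n}$. No significant obstacle arises; the proof is essentially a concatenation of three standard facts, with the only minor subtlety being the correct identification of the rank-selected poset structure. An alternative, more self-contained route would be to construct an explicit shelling (for instance by ordering maximal chains lexicographically by their sequence of atomic step-up vectors from $V = \{\vv \in \Nz^n : \sum_i v_i = m,\ \vv \leq \zz\}$) and verify the shelling condition by an adjacent-swap argument on decompositions, but I would prefer the rank-selection reduction for its brevity.
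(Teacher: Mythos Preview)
Your proposal is correct and follows essentially the same route as the paper: identify $[\OO,\zz]$ in $\ver_{1,n}$ as a (distributive, hence modular) lattice, invoke Bj\"{o}rner's shellability theorem for such lattices, and then pass to $\ver_{m,n}$ by rank selection. The only cosmetic difference is that the paper phrases the first step via modularity of the explicit join/meet, whereas you phrase it via the product-of-chains description; both lead to the same citation and the same rank-selection conclusion.
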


\begin{proof}
  We have that $\ver_{m,n}$ is a subposet of $\ver_{1,n}$. We first observe
  that $[\OO, \zz]$ is shellable as an interval in $\ver_{1,n}$ and then we
  deduce that $[\OO, \zz]$ is shellable as an interval in $\ver_{m,n}$ as well.

  It is not hard to observe that $[\OO, \zz]$ as an interval in 
  $\ver_{1,n}$ is a graded modular lattice: By
  \emph{modular} we mean that
  $$
  \rk(\aa) + \rk(\bb) = \rk(\aa \vee \bb) + \rk(\aa \wedge \bb).
  $$
  If $\aa = \alpha_1\cdots\alpha_n$ and $\bb = \beta_1\cdots\beta_n$, then 
  $$\aa \vee \bb = \max(\alpha_1,\beta_1)\cdots\max(\alpha_n,\beta_n)$$
  and
  $$\aa \wedge \bb = \min(\alpha_1,\beta_1)\cdots\min(\alpha_n,\beta_n).$$
  These relations easily imply modularity of $\ver_{1,n}$. Therefore,
  $\ver_{1,n}$ is shellable by~\cite[Theorem 3.7]{bjorner80}
  (\emph{semimodular} would be sufficient).

  The fact that $\ver_{m,n}$ is shellable follows from the fact that the
  shellability is preserved by rank-selections. Indeed, if we start with $[\OO,
  \zz]$ as an interval in $\ver_{1,n}$ we remove elements exactly in levels not
  divisible by $m$ in order to turn it into an interval in $\ver_{m,n}$. This
  means that we remove the same number of elements from every maximal chain.
  Therefore, $\ver_{m,n}$ is shellable by~\cite[Theorem 11.13]{bjorner95}.

\end{proof}

\section{Proof of Theorem~\ref{t:vn}}
\label{s:proof_vn}

The task of this section is to prove Theorem~\ref{t:vn}. 
Throughout this section we assume that $n \geq 4$ is fixed.


\subsection{The induction mechanism}

Let $\Aall$ be the set of all atoms of
$\pinch_n$.\footnote{It can be computed that $|\Aall| = \binom{2n-1}n - 1$;
however, we will not need to know this value explicitly.}
We will consider several linear orders on $\Aall$ and some of its subsets. Let $\xx =
\xi_1\cdots\xi_n \in \Z^n$. For $\ell \in [n]$ we set  $\xx^{(\ell)} =
\xi_{\ell}
\cdots \xi_n$. We also set $A^{(\ell)}$ to be the subset of $\Aall$ made of all
$\xx \in \Aall$ such that $\xx^{(\ell)} \neq 0\cdots 0$. We consider two linear
orders, $<^L$ and $<^S$ on $\Aall$. 

The first order is the lexicographic order given in the following way.
Let $\ss = \sigma_1\cdots\sigma_n$ and $\ttt = \tau_1\cdots\tau_n$. 
We set $\ss <^L \ttt$ if and only there is $j \in [n]$ such
that $\sigma_i = \tau_i$ for $i < j$ and $\sigma_j < \tau_j$. 

\begin{table}
\begin{center}
\begin{tabular}{cccccccccc}
  $<^L$ on $\Aall$: &  0004 & 0013 & 0022 & 0031 & 0040 & 0103 & 0112 & 0121 \cr
				  
& 0130 & 0202 & 0211 & 0220 & 0301 & 0310 & 0400 & 1003 \cr
& 1012 & 1021 &  1030 & 1102 & \xcancel{1111} & 1120 & 1201 & 1210 \cr
& 1300 & 2002 & 2011 &  2020 & 2101 & 2110 & 2200 & 3001 \cr
& 3010 & 3100 & 4000 \\[5mm]
\end{tabular}
\begin{tabular}{ccccccccc}
$<^L$ on $A^{(4)}$: & 0004 & 0013 & 0022 & 0031 & 0103 & 0112 & 0121 & 0202 \cr 
& 0211 & 0301 & 1003 & 1012 & 1021 & 1102 & \xcancel{1111} & 1201 \cr
& 2002 & 2011 & 2101 & 3001 \\[5mm]
\end{tabular}
\begin{tabular}{crccccccc}
$<^S$ on $\Aall$: & $A^S$ & 0004 & 0013 & 0022 & 0031 & 0103 & 0112 & 0121 \cr
& & 0202 & 0211 & 0301 & 1003 & 1012 & 1021 & \xcancel{1102} \cr 
& & \xcancel{1111} & 1201 & 2002 & 2011 & 2101 & 3001 \cr
& $\{1102\}$ & 1102 \cr
& $\Aall \setminus A^{(4)}$ & 0040 & 0130 & 0220 & 0310 & 0400 & 1030 & 1120
  \cr
  & & 1210 & 1300 & 2020 & 2110 & 2200 & 3100 & 3010 \cr 
  & & 4000 \cr
\end{tabular}
\caption{Atoms of $\Aall$ and $A^{(4)}$ sorted by the $<^L$ order and atoms of
$\Aall$ sorted by the $<^S$ order for $n= 4$.}
\label{tab:orders}
\end{center}
\end{table}

The second order is a $\emph{specific}$ order which we describe now. We
set $A^S := A^{(n)} \setminus \{1\cdots102\}$. The smallest elements in $<^S$
order are the elements of $A^S$ sorted lexicographically by the $<^L$ order. Then
the element $1\cdots102$ follows. Finally, the elements of $\Aall \setminus
A^{(n)}$ follow sorted again by the $<^L$ order. The reader is referred to
Table~\ref{tab:orders} for more concrete comparison of these orders (for $n =
4$). 

We will need to work with the following ordered sets. Let
$\aa^L_i$ be the $i$th
smallest element of $\Aall$ in the $<^L$ order and similarly $\aa^S_i$ be the
$i$th smallest
element in the $<^S$ order. We then set $A^L_k := \{\aa_1^L,\dots,\aa_k^L\}$
and $A^S_k := \{\aa_1^S,\dots,\aa_k^S\}$.
We also set $A^{(\ell)}_k$ to be the set of the first $k$ elements of
$A^{(\ell)}$ in the $<^L$ order (this time, we omit the superscript $L$ for
simpler notation).

\medskip

Now let $I = [\OO,\zz]$ be any interval in $\pinch_n$. Our task is to show that
$I$ is shellable. In order to explain our next step let us use the following
simplification of notation. Let $A$ be some set of atoms of $I$ equipped with
the $<^L$ order (resp. with the $<^S$ order). Instead of saying that
$I\hull{A}$ is $A$-shellable we say that $I\hull{A}$ is \lsh-shellable  (resp.
$I\hull{A}$ is \ssh-shellable). This simplifies the notation when our typical $A$ will
be of form $A^{(\ell)}_k \cap I$. In addition, it also explicitly emphasizes
whether $A$ is equipped with the $<^L$ order or the $<^S$ order.

Our task will be to prove the assertions below. The first two assertions depend
on $k \leq |\Aall|$. The third assertion depends on $\ell \in [n-1]$ and $k
\leq |A^{(\ell + 1)}|$.\\

\begin{tabular}{cl}
  $(\As^L_{k})$ & The poset $I\hull{A^L_k \cap I}$ is \lsh-shellable
  (if nonempty).\\[3mm]
  $(\As^S_{k})$ & The poset $I\hull{A^S_k \cap I}$ is \ssh-shellable
  (if nonempty).\\[3mm]
  $(\As^{(\ell + 1)}_k)$ & The poset $I\hull{A^{(\ell + 1)}_k \cap I}$ is
  \lsh-shellable.\\[5mm]
\end{tabular} 

\begin{proposition}
\label{p:vn}
  Let $I = [\OO,\zz]$ be any interval in $\pinch_n$. Then assertions
  $(\As^L_{k})$ and $(\As^S_{k})$ are valid for any positive integer $k \leq
  |\Aall|$ and assertion $(\As^{(\ell + 1)}_k)$ is valid for any $\ell \in
  [n-1]$ and any positive integer $k \leq |A^{(\ell + 1)}|$.
\end{proposition}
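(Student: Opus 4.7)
The plan is a double induction. The outer induction is on $\rk(\zz)$ in the ambient Veronese poset $\ver_n$; the base case $\rk(\zz)\leq 1$ is trivial because the relevant posets then have at most one maximal chain. For fixed $\zz$, the inner induction is on $k$, and I would prove the three assertions $(\As^L_k)$, $(\As^S_k)$, $(\As^{(\ell+1)}_k)$ jointly so that each may feed into the others. The base case $k=1$ amounts to observing that, for an atom $\aa$ of $I$, the poset $I\hull{\{\aa\}}$ is the one-point extension of $[\aa,\zz]\cong[\OO,\zz-\aa]$ by $\OO$, whose shellability is supplied by the outer inductive hypothesis applied to the smaller interval $[\OO,\zz-\aa]$.

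For the step from $k$ to $k+1$ in $(\As^L_{k+1})$ and $(\As^S_{k+1})$, the plan is to apply Criterion~I (Theorem~\ref{t:ci}) with $A = A^{\ast}_k \cap I$ and $a^+ = \aa^{\ast}_{k+1}$, where $\ast\in\{L,S\}$. Condition~(i) is exactly the inner inductive hypothesis. Condition~(ii) follows from the outer induction, because $[a^+,q]\cong[\OO,q-a^+]$ has strictly smaller rank than $[\OO,\zz]$. For condition~(iv), after the translation $[q,\zz]\ominus q \cong [\OO,\zz-q]$, the set $A(q)$ can be identified with a set of the form $A^{(\ell+1)}_m \cap [\OO,\zz-q]$ for suitably chosen $\ell$ and $m$, so its shellability is supplied by assertion $(\As^{(\ell+1)}_m)$ of the outer induction (applied to the smaller interval). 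For $(\As^{(\ell+1)}_k)$, I would derive it from $(\As^L_{|\Aall|})$ via Criterion~III (Theorem~\ref{t:ciii}); condition~(ii) of that criterion reduces to a short combinatorial check on which atoms can lie beneath a given rank-$2$ element of $P\hull{A^{(\ell+1)}_k \cap I}$.

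The principal obstacle is verifying condition~(iii) of Criterion~I---the edge falling property---for each newly added atom $a^+$. Given covers $p \covers q$ and $q \covers q'$ with $p\in P\hull{A}$, $q\in Q$ and $q'\in Q\cup\{\OO\}$, one must produce $p'\in P\hull{A}$ with $p \covers p' \covers q'$; equivalently, one must find an atom $\aa \leq p - q'$ that precedes $a^+$ in the chosen order and such that none of the intermediate differences equals the forbidden vector $\jj$. The case analysis will split according to the shape of $a^+$ (in particular whether $a^+$ has small support or resembles $\jj$), the coordinate-wise relationship between $p$ and $q$, and the special rank-$2$ case $q'=\OO$. The hypothesis $n\geq 4$ enters as the source of enough coordinate-wise freedom to construct such an alternative atom $\aa$, and the peculiar placement of $1\cdots 102$ in the $<^S$ order---immediately after $A^S$ and before $\Aall\setminus A^{(n)}$, rather than at its natural $<^L$-position inside $A^{(n)}$---is what makes the edge falling argument succeed for $(\As^S_k)$ in cases where the $<^L$ ordering would fail.
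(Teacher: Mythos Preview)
Your double-induction skeleton, the base case $k=1$, and the derivation of $(\As^{(\ell+1)}_k)$ from $(\As^L_{|\Aall|})$ via Criterion~III all match the paper. The main gap is in your account of condition~(iv) and, correspondingly, in your explanation of why the $<^S$ order exists.

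You assert that $A(q)\ominus q$ is always of the form $A^{(\ell+1)}_m\cap[\OO,\zz-q]$. This is not true. Even for $k\geq 3$ there is the exceptional atom $\aa^+ = 201\cdots1$: then $Q$ has two ``layers'' (Claim~\ref{c:QLk}(ii)), and for $q$ in the lower layer one gets $A(q)\ominus q = A^L_{|\Aall|-2}\cap[\OO,\zz-q]$, which is an initial segment for $<^L$ but not of the $A^{(\ell+1)}_m$ type. More importantly, for $k=2$ (where $\aa^+ = 0\cdots01(n-1)$) the element $\aa^+ + 1\cdots102 = 1\cdots1(n+1)$ lies in $Q$, and one computes $A(\aa^+)\ominus\aa^+ = A^{(n)}\setminus\{1\cdots102\} = A^S$. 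This set is \emph{not} an initial segment of $A^{(n)}$ in the $<^L$ order, so no assertion of type $(\As^{(\ell+1)}_m)$ covers it. The paper therefore handles $k=2$ with Criterion~II rather than Criterion~I: condition~(ii) of Criterion~II asks for an $\Aall(\aa^+)$-shelling of $I(\aa^+)$ with $A(\aa^+)$ first, and after translation this is precisely $(S)$-shellability of $[\OO,\zz-\aa^+]$, i.e.\ the assertion $(\As^S_{|\Aall|})$ for the smaller interval. \emph{This} is the raison d'\^etre of the $<^S$ order: it is engineered so that $A^S$ becomes an initial segment, making the $k=2$ step go through.

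Your final paragraph attributes the special placement of $1\cdots102$ in $<^S$ to the edge-falling property. That is not where it is used. In the paper's proof of $(\As^S_k)$ for $k\geq 3$, the edge-falling verification is taken verbatim from the $<^L$ case whenever $\aa^+\neq 1\cdots102$, and is trivial when $\aa^+ = 1\cdots102$ because then $Q=\{\aa^+\}$. The $<^S$ order earns its keep in condition~(ii) of Criterion~II at $k=2$, not in condition~(iii) of Criterion~I later on.
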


Theorem~\ref{t:vn} follows from the proposition by setting $k = |\Aall|$
in $(\As^L_k)$ (or $(\As^S_k)$).


The task is to prove Proposition~\ref{p:vn} by a double induction. 
The first (outer) induction is over $\rk(\zz)$. 
The second (inner) induction is
slightly unusual---we first prove $(\As^L_{k})$ by induction in $k$ (see
Lemmas~\ref{l:LS1}, \ref{l:LS2}, and~\ref{l:Lk} below), then we
prove $(\As^S_{k})$ by induction in $k$ (see 
Lemmas~\ref{l:LS1}, \ref{l:LS2}, and~\ref{l:Sk} below), finally, we prove $(\As^{(\ell +
1)}_k)$ already assuming $(\As^L_{k})$ directly with no induction (see
Lemma~\ref{l:Al} below).
The fact that we use the induction is also the reason why we need to prove all
assertions $(\As^L_{k})$, $(\As^S_{k})$, and $(\As^{(\ell +
1)}_k)$, although only $(\As^L_{k})$ is sufficient for deducing
Theorem~\ref{t:vn}. We need the induction assumption strong enough so that
the induction works well. 

We also remark that $I$ does not need to contain all atoms from $\Aall$ (for example, if the
first coordinate of $\zz$ is zero). This is why we need to consider, for
example, \lsh-shellability of $I\hull{A^L_k \cap I}$ instead of
(possibly expected) \lsh-shellability of $I\hull{A^L_k}$.

For improved readability, we decompose the induction step into several lemmas,
with different approaches on how to prove them.
From now on we assume that $\zz$ and $I = [\OO,\zz]$ are fixed. 

\begin{lemma}
\label{l:LS1}
  Let us assume that Proposition~\ref{p:vn} is valid for every interval
  $[\OO,\yy]$ with $\rk(\yy) < \rk(\zz)$. Then $I\hull{A^L_1 \cap I}$ is
  \lsh-shellable and $I\hull{A^S_1 \cap I}$ is
  \ssh-shellable (if they are nonempty), that is, $(\As^L_{1})$ and
  $(\As^S_{1})$ are valid.
\end{lemma}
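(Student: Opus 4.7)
The starting observation is that $\aa^L_1$ is the $<^L$-smallest element of $\Aall$, namely the vector $\aa := 0\cdots 0n$; and inspecting the definition of $<^S$ shows that the smallest $<^S$-element is the smallest $<^L$-element of $A^S = A^{(n)}\setminus\{1\cdots 102\}$, which for $n \geq 4$ is again $0\cdots 0n$. Thus $A^L_1 = A^S_1 = \{\aa\}$ and the two statements collapse to one. If $\aa \not\preceq \zz$ then both $A^L_1 \cap I$ and $A^S_1 \cap I$ are empty and there is nothing to prove, so I may assume $\aa \in I$, in which case $A^L_1 \cap I = A^S_1 \cap I = \{\aa\}$.

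Let $R := I\hull{\{\aa\}}$. By definition $R = \{\OO\} \cup [\aa, \zz]$, the interval being computed in $\pinch_n$. The translation isomorphism recalled in Section~\ref{s:Veronese} gives $[\aa,\zz] \cong [\OO, \zz - \aa]$, and $\rk(\zz-\aa) = \rk(\zz) - 1 < \rk(\zz)$ (the ranks in $\pinch_n$ being coordinate sum divided by $n$). So the outer induction hypothesis applies to the smaller interval $[\OO, \zz - \aa]$. Concretely, applying $(\As^L_{|\Aall|})$ to this interval, and using that $A^L_{|\Aall|} = \Aall$ so that $[\OO,\zz-\aa]\hull{\Aall \cap [\OO,\zz-\aa]}$ is simply $[\OO,\zz-\aa]$ itself, yields an honest shelling of $[\OO, \zz-\aa]$ and hence, via the isomorphism, of $[\aa, \zz]$.

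To finish, I plan to lift a shelling order $c_1, c_2, \ldots$ of the maximal chains of $[\aa, \zz]$ to a shelling of $R$ simply by prepending $\OO$ to every chain; this produces exactly the maximal chains of $R$. The verification of condition~(Sh) is immediate: given chains $c'$ before $c$ in $R$, a witness $c_0^*$ from the shelling of $[\aa,\zz]$ extends to $c^* := c_0^* \cup \{\OO\}$, and since the common segment $\{\OO, \aa\}$ sits inside every maximal chain of $R$, the inclusion $c^* \cap c \supseteq c' \cap c$ and the one-level difference between $c^*$ and $c$ are preserved. Finally, because $|A^L_1 \cap I| = |A^S_1 \cap I| = 1$, the requirement that the order respect $\aleq$ on the atom set is vacuous, so the resulting shelling is simultaneously an \lsh-shelling and an \ssh-shelling. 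There is no real obstacle; the only thing to watch is the degenerate case $\aa = \zz$ (i.e.\ $\rk(\zz) = 1$), in which $R$ has a single maximal chain and the claim is immediate.
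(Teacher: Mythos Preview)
Your proof is correct and follows essentially the same approach as the paper: identify $A^L_1 = A^S_1 = \{0\cdots0n\}$, reduce to shellability of $[\aa,\zz] \cong [\OO,\zz-\aa]$ via the outer induction hypothesis, and prepend $\OO$ to each maximal chain. You are simply a bit more explicit about verifying (Sh) and about the degenerate case $\rk(\zz)=1$, but the argument is the same.
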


\begin{lemma}
\label{l:LS2}
  Let us assume that Proposition~\ref{p:vn} is valid for every interval
  $[\OO,\yy]$ with $\rk(\yy) < \rk(\zz)$. Then $I\hull{A^L_2 \cap I}$ is
  \lsh-shellable and $I\hull{A^S_2 \cap I}$ is
  \ssh-shellable (if they are nonempty), that is, $(\As^L_{2})$ and  
    $(\As^S_{2})$ are valid.
\end{lemma}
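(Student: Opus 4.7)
The plan is to apply Criterion~I (Theorem~\ref{t:ci}) to extend the shelling from $A = A^L_1 \cap I$ to $A^+ = A^L_2 \cap I$. First observe that the two smallest atoms agree in both the $<^L$ and $<^S$ orders: $\aa^L_1 = \aa^S_1 = 0\cdots 0n$ and $\aa^L_2 = \aa^S_2 = 0\cdots 01(n-1)$, with identical relative order. Thus $A^L_2 = A^S_2$ as ordered sets, so $(\As^L_2)$ and $(\As^S_2)$ are the same statement and it suffices to prove one of them. If $|A^L_2 \cap I| \leq 1$ the claim is vacuous or reduces to Lemma~\ref{l:LS1}, so I may assume both atoms lie in $I$ and set $A = \{\aa^L_1\}$ with $a^+ = \aa^L_2$.

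Of the four hypotheses of Theorem~\ref{t:ci}, three follow immediately from the available induction. Condition~(i) is exactly $(\As^L_1)$, given by Lemma~\ref{l:LS1}. For~(ii), each interval $[a^+, q]$ with $q \in Q$ is isomorphic via translation by $-a^+$ to $[\OO, q - a^+]$ in $\pinch_n$, a poset of rank strictly less than $\rk(\zz)$, so the outer induction hypothesis applied as $(\As^L_{|\Aall|})$ delivers shellability. For~(iv), after translating by $-q$ the poset $I(q)\hull{A(q)}$ becomes $[\OO, \zz - q]\hull{B}$, where $B$ is, up to at most one exceptional element, the set of atoms of $[\OO, \zz - q]$ with non-zero last coordinate; this $B$ has the form $A^{(n)}_k \cap [\OO, \zz - q]$ for suitable $k$, so the outer induction hypothesis $(\As^{(n)}_k)$ gives the required shellability.

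The main obstacle is verifying condition~(iii), the edge falling property. Given $q \in Q$, $p \in P\hull{A}$ with $p \covers q$, and $q' \in Q \cup \{\hat 0\}$ with $q \covers q'$, set $r = p - q$ and $s = q - q'$; both are atoms of $\pinch_n$. The natural candidate is $p' = q' + r = p - s$, which automatically satisfies $p \covers p' \covers q'$, so the only question is whether $p' \in P\hull{A}$, i.e.\ whether $p' \succeq \aa^L_1$. The elements of $Q$ split into those with $q_n = n - 1$ and the single exceptional element $q = 1\cdots 1(n+1)$; the subcase $q' = \hat 0$ forces $q = a^+$ and is handled directly by choosing $p' = \aa^L_1$. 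In the generic case $p'_n \geq n$ is automatic, and the only obstruction is $p' = 1\cdots 1(n+1)$, equivalently $p' - \aa^L_1 = \jj$. Here one substitutes a different decomposition $r^\dagger + s^\dagger = r + s$ into two atoms with $q' + r^\dagger \in P\hull{A}$; tracing through the forced values shows that this obstruction only occurs when $r = 1\cdots 102$, and the assumption $n \geq 4$ provides enough coordinates in $r + s$ for an alternative decomposition to exist. Handling this reshuffling together with the exceptional element $q = 1\cdots 1(n+1)$ constitutes the main combinatorial work of the proof.
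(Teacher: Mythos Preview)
Your approach via Criterion~I (Theorem~\ref{t:ci}) is different from the paper's, which uses Criterion~II (Theorem~\ref{t:cii}). Both routes can be made to work, but your verification of condition~(iv) contains a genuine gap.

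You claim that for each $q \in Q$ the translated atom set $B = A(q) \ominus q$ has the form $A^{(n)}_k \cap [\OO, \zz - q]$. This fails precisely at $q = \aa^+$: there $B$ consists of atoms $\aa$ with $\aa_n \geq 1$ \emph{and} $\aa^+ + \aa \neq 1\cdots1(n{+}1)$, i.e.\ $\aa \neq 1\cdots102$, so $B = A^S \cap [\OO, \zz - \aa^+]$. Since $1\cdots102$ sits in the middle of $A^{(n)}$ in the $<^L$ order, removing it does \emph{not} give an initial segment $A^{(n)}_k$, and $(\As^{(n)}_k)$ does not apply. The correct invocation is $(\As^S_{|A^S|})$ for $[\OO, \zz - \aa^+]$; this is exactly why the special order $<^S$ was introduced in the first place. (Also, for the exceptional $q = 1\cdots1(n{+}1)$ one finds $B = \Aall \cap [\OO, \zz - q]$, not a subset of $A^{(n)}$, so there one needs $(\As^L_{|\Aall|})$; for the remaining $q$ your description is correct.)

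The paper sidesteps iterating over all of $Q$ by using Criterion~II: its condition~(ii) is the single requirement that $I(\aa^+)$ be $\Aall(\aa^+)$-shellable with $A(\aa^+)$ coming first, and the observation $A(\aa^+) \ominus \aa^+ = A^S$ together with the inductive $(S)$-shellability of $[\OO, \zz - \aa^+]$ delivers this in one stroke. The reshuffling you sketch for the edge-falling property (the obstruction $r = 1\cdots102$, and the case $q = 1\cdots1(n{+}1)$) is isolated in the paper as Claim~\ref{c:uv102} and used to verify condition~(iii) of Criterion~II, so the core combinatorial content is the same in both approaches --- but the paper's choice of Criterion~II makes the role of $A^S$ and the $<^S$ order transparent rather than hidden inside a case of condition~(iv).
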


\begin{lemma}
\label{l:Lk}
  Let $k \in \{3, \dots, |\Aall|\}$. Let us assume that Proposition~\ref{p:vn} is valid for every interval
  $[\OO,\yy]$ with $\rk(\yy) < \rk(\zz)$. Let us also assume that
  $(\As^L_{k'})$ is valid for the interval $I = [\OO,\zz]$ and for $k' < k$.
  Then $I\hull{A^L_k \cap I}$ is
  \lsh-shellable (if nonempty), that is, $(\As^L_{k})$ is valid.
\end{lemma}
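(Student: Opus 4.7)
My plan is to apply Criterion~I (Theorem~\ref{t:ci}) with $A := A^L_{k-1}\cap I$ and $a^+ := \aa^L_k$, so that $A^+ = A^L_k \cap I$ and the $<^L$ order on $A^+$ places $a^+$ above every element of $A$. If $\aa^L_k \notin I$, then $A^L_k\cap I = A^L_{k-1}\cap I$ and $(\As^L_k)$ collapses to the inductively given $(\As^L_{k-1})$; so assume $\aa^L_k \in I$. I then need to verify the four conditions of Theorem~\ref{t:ci}.

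Conditions (i), (ii), and (iv) are essentially routine. Condition~(i) is exactly the hypothesis $(\As^L_{k-1})$ applied to $I$. For condition~(ii), translation by $-a^+$ inside $\pinch_n$ yields $[a^+, q] \cong [\OO, q - a^+]$, an interval of rank strictly less than $\rk(\zz)$, so the outer induction hypothesis applies and delivers shellability. For condition~(iv), the translation $I(q)\cong[\OO,\zz-q]$ again drops the rank, so the outer induction furnishes every part of Proposition~\ref{p:vn} for this smaller interval; the work is to identify the translated $A(q)$ with a set of the form $A^{(\ell+1)}_{k'}\cap[\OO,\zz-q]$ (or to reduce to one via Criterion~III, Theorem~\ref{t:ciii}). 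This identification is essentially bookkeeping: an atom $\bb = p-q$ of the translated poset belongs to the translation of $A(q)$ precisely when $q+\bb$ is dominated by some element of the lex-initial segment $A$, which is a condition on the leading nonzero coordinates of $\bb$ determined by $q$ and $a^+$.

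The substantive work is condition~(iii), the edge-falling property. Given $q \in Q$, $p \in P\hull{A}$ with $p \covers q$, and $q' \in Q \cup \{\OO\}$ with $q \covers q'$, write $p = q + \bb$ and (if $q' \neq \OO$) $q = q' + \cc$ with $\bb,\cc$ atoms of $\pinch_n$. The natural candidate is $p' := q' + \bb$, and the cover conditions and membership in $\pinch_n$ are easy to verify (note that $q'\neq\OO$ forces $q'+\bb \neq \jj$, as the coordinate sum of $q'+\bb$ exceeds $n$). The delicate point is producing $a'\in A$ with $a' \preceq p'$. Starting from the given $a \preceq p$, the naive choice $a' = a$ fails only when $p - \cc - a$ has a negative coordinate or equals the forbidden $\jj$. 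In the pinch obstruction one has $p = a + \cc + \jj$, and I would exchange $a$ for a lex-smaller atom of $\pinch_n$ lying under $p'$, exploiting that $A$ is a lex-initial segment; in the coordinate obstruction one uses the additional atom $a^+ \preceq p$ to relocate the dominating atom to one surviving the subtraction of $\cc$. The edge case $q' = \OO$ forces $q = a^+$ and $\rk(p) = 2$, so $p \succeq a$ with $a \in A$ of rank $1$ already gives $p \covers a$, and we take $p' := a$.

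The principal obstacle is precisely this exchange argument for edge-falling, and in particular the need to dodge the pinch at $\jj$. This is where the hypothesis $n \geq 4$ enters: for $n \geq 4$ there are always enough lex-smaller atoms of $\pinch_n$ available to absorb the exchange, whereas for $n = 3$ the rigidity of $\pinch_3$ obstructs it, consistent with the introduction's remark that some intervals of $\pinch_3$ are not even lexicographically shellable.
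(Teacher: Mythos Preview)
Your overall strategy (apply Criterion~I with $A=A^L_{k-1}\cap I$ and $a^+=\aa^L_k$) matches the paper exactly, and your treatment of conditions~(i) and~(iv) is on the right track. Your argument for~(ii) via the outer induction on $[\OO,q-\aa^+]$ is correct, though different from the paper: the paper instead proves (Claim~\ref{c:QLk}) that when $\aa^+=0\cdots0\alpha_\ell\cdots\alpha_n$ with $\alpha_\ell>0$, the set $Q$ consists (generically) of those $\qq\succeq\aa^+$ with $(\qq-\aa^+)^{(\ell+1)}=0\cdots0$, so $[\aa^+,\qq]$ is literally an interval in the unpinched $\ver_{n,\ell}$ and is shellable by Proposition~\ref{p:non_pinch}.

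The real gap is in condition~(iii). You try to verify edge-falling by starting from a witness $a\in A$ with $a\preceq p$ and pushing it down to $a'\preceq p'=q'+\bb$; you sketch a pinch case and a ``coordinate obstruction'' case, the latter handled by an unspecified ``relocation'' using $\aa^+\preceq p$. This is where your argument is not a proof: you give no mechanism for producing an $a'\in A$ when $a\not\le p'$, and it is not clear one exists by your route. The paper avoids this entirely. Once Claim~\ref{c:QLk} is in hand, one observes that $\cc=q-q'$ has $(\cc)^{(\ell+1)}=0$ (both $q,q'\in Q\cup\{\OO\}$), hence $(p'-\aa^+)^{(\ell+1)}=(p-\aa^+)^{(\ell+1)}\neq 0$ (since $p\notin Q$), so $p'\notin Q$ and therefore $p'\in I\hull{A}$ automatically. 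No atom-chasing is needed. The same structural claim is what turns your ``bookkeeping'' in~(iv) into an actual identification $A(q)\ominus q = A^{(\ell+1)}\cap[\OO,\zz-q]$ (generically), so that $(\As^{(\ell+1)}_{k'})$ from the outer induction applies.

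You also miss a genuine exceptional case: when $\aa^+=201\cdots1$ the description of $Q$ changes (Claim~\ref{c:QLk}(ii)), the natural $p'_{cand}$ can land back in $Q$, and one must pick a different $p'$ by hand; likewise condition~(iv) then requires $(\As^{(2)}_{k'})$ or $(\As^L_{k'})$ rather than $(\As^{(\ell+1)}_{k'})$. This exception is precisely the kind of thing your ``exchange'' heuristic would have to confront, and it does not fit the pattern you sketched. In short: the missing ingredient is the explicit description of $Q$; once you prove that, conditions~(ii)--(iv) become short, and the $n\ge4$ hypothesis is not used in this lemma at all (it enters in Lemma~\ref{l:Sk}).
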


\begin{lemma}
  \label{l:Sk}
  Let $k \in \{3, \dots, |\Aall|\}$. Let us assume that Proposition~\ref{p:vn} is valid for every interval
  $[\OO,\yy]$ with $\rk(\yy) < \rk(\zz)$.
  Let us also assume that $(\As^S_{k'})$ is valid for the interval
  $[\OO,\zz]$ and for $k' < k$. 
  Then $I\hull{A^S_k \cap I}$ is \ssh-shellable (if nonempty), that is,
  $(\As^S_{k})$ is valid.
\end{lemma}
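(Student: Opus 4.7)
The plan is to apply Criterion~I (Theorem~\ref{t:ci}) to the poset $I$ with $A := A^S_{k-1} \cap I$, $a^+ := \aa^S_k$, and $A^+ := A^S_k \cap I$, where $A$ carries the order inherited from $<^S$; then $I\hull{A^+}$ is precisely the poset whose $\ssh$-shellability we want. We must verify conditions (i)--(iv). Condition (i) is exactly the inner induction hypothesis $(\As^S_{k-1})$ applied to $I$.

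For condition (ii), given $q \in Q := I\hull{A^+} \setminus I\hull{A}$, the shift $\ominus a^+$ identifies $[a^+,q]$ with the interval $[\OO,q-a^+]$ of $\pinch_n$, whose rank is $\rk(q)-1 < \rk(\zz)$; the outer induction then supplies Proposition~\ref{p:vn} for this smaller interval, and in particular $(\As^L_{|\Aall|})$ gives the needed shellability. Condition (iv) is handled analogously: shifting $\ominus q$ identifies $I(q)\hull{A(q)}$ with $J\hull{B}$, where $J = [\OO,\zz-q]$ has rank smaller than $\rk(\zz)$ and $B$ is a certain set of atoms of $J$. When $B$ matches an initial segment $A^L_{k'} \cap J$ of the $<^L$-order we invoke $(\As^L_{k'})$ for $J$ directly; otherwise we start from shellability of $J\hull{\Aall\cap J}$ (via $(\As^L_{|\Aall|})$ for $J$) and apply Criterion~III (Theorem~\ref{t:ciii}) to restrict the atom set down to $B$ with a suitable linear refinement, after checking the earlier-atom-availability condition~(ii) of that theorem using the arithmetic of $\pinch_n$.

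The substantive content is condition (iii), the edge-falling property. Given $p \in I\hull{A}$ with $p \covers q$ and $q' \in Q \cup \{\hat 0\}$ with $q \covers q'$, the natural candidate is $p' := p - (q - q')$. Since both $p - q$ and $q - q'$ are atoms of $\pinch_n$, the relations $p \covers p' \covers q'$ hold in $\ver_n$ automatically; they survive the pinch provided $p' \neq \jj$, which fails exactly in the configuration $p = \jj + (q-q')$. In addition we must certify $p' \in I\hull{A}$, i.e.\ that some atom of $A$ lies weakly below $p'$; this can fail only when every atom of $\pinch_n$ weakly below $p'$ is either $a^+$ itself or comes after $a^+$ in $<^S$. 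Ruling out these two restricted families of bad configurations is the heart of the argument.

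The principal obstacle is the distinguished atom $a^+ = 1\cdots 102$, which is exactly where $<^S$ deviates from $<^L$. The $<^S$ order is engineered so that at the moment this atom is processed, $A$ already contains every atom of $A^{(n)} \setminus \{1\cdots 102\}$, while no atom of $\Aall \setminus A^{(n)}$ is yet present; this specific state excludes both bad configurations above, which is the precise reason $<^S$ is preferred to $<^L$ here. The remaining cases, namely $a^+ \in A^S$ behaving lexicographically and $a^+ \in \Aall \setminus A^{(n)}$ with all of $A^{(n)}$ already available, parallel the analysis of Lemma~\ref{l:Lk} and are expected to go through by the same shifted-interval arithmetic, with $<^S$ merely permuting the order in which the atoms become available.
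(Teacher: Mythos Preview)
Your overall plan---apply Criterion~I with $A=A^S_{k-1}\cap I$ and $\aa^+=\aa^S_k$---matches the paper exactly, and your treatment of condition~(ii) is in fact cleaner than the paper's: shifting $[\aa^+,\qq]$ to $[\OO,\qq-\aa^+]$ and invoking the outer induction on rank is valid and avoids the paper's detour through $\ver_{n,\ell}$ via Proposition~\ref{p:non_pinch}. That is a genuine simplification.

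The gap is in conditions~(iii) and~(iv). The paper's proof rests on an explicit determination of $Q$ (Claim~\ref{c:QSk}), a substantial case analysis that you have not carried out and cannot bypass. For condition~(iv) you need to know $A(\qq)\ominus\qq$ precisely before you can decide whether it is an $<^L$-initial segment or which subset $B$ to feed into Criterion~III; this identification \emph{is} the content of Claim~\ref{c:QSk}. For condition~(iii), your candidate $\pp' = \qq' + (\pp-\qq)$ is the right one, but certifying $\pp'\in I\hull{A}$ requires knowing which elements of $I\hull{A^+}$ lie in $Q$ and which do not---again Claim~\ref{c:QSk}.

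More concretely, your assertion that the cases $\aa^+\in A^S$ ``parallel the analysis of Lemma~\ref{l:Lk}'' is not quite right. When $\aa^+\in A^S$, the set $A$ in the $<^S$-order is $\{\aa\in A^S:\aa<^L\aa^+\}$, which is strictly smaller than the $<^L$-predecessor set $\{\aa\in\Aall:\aa<^L\aa^+\}$ used in Lemma~\ref{l:Lk}; in particular $1\cdots102$ is missing whenever $1\cdots102<^L\aa^+$. For most $\aa^+$ this does not enlarge $Q$, but for $\aa^+=201\cdots102$ it does: here $Q$ acquires the extra layer $(\qq-\aa^+)^{(2)}=10\cdots0$, exactly as in the $201\cdots1$ case, and this case is absent from Lemma~\ref{l:Lk}. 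The paper's Claim~\ref{c:QSk} therefore has \emph{three} exceptional atoms ($1\cdots102$, $201\cdots1$, $201\cdots102$) against the two of Claim~\ref{c:QLk}, and the extra one is precisely the kind of thing your ``expected to go through'' clause would miss.
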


\begin{lemma}
\label{l:Al}
Let $\ell \in [n-1]$ and $k \in \{1, \dots, |A^{(\ell+1)}|\}$. 
Let us assume that $(\As^L_{k'})$
  is valid for the interval $I = [\OO,\zz]$ and for $k' = |\Aall|$.
  Then $I\hull{A^{(\ell+1)}_k \cap I}$ is
  \lsh-shellable (if nonempty), that is, $(\As^{(\ell+1)}_{k})$ is valid.
\end{lemma}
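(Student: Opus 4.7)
The plan is to derive Lemma~\ref{l:Al} by applying Criterion III (Theorem~\ref{t:ciii}) with $A := \Aall \cap I$ equipped with the order $<^L$ and $A' := A^{(\ell+1)}_k \cap I$ with the inherited order. Since every nonzero element of $I$ lies above some atom, $I\hull{A}$ coincides with $I$, and the assumption $(\As^L_{|\Aall|})$ supplies condition~(i) of Theorem~\ref{t:ciii}. All the work is in condition~(ii): for each $b \in A \setminus A'$ and each $p \in I\hull{A'}$ with $p \covers b$, exhibit $b' \in A'$ with $b' <^L b$ and $p \covers b'$.

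Since $p$ has rank two and lies in $I\hull{A'}$, it covers some atom $b^{**} \in A^{(\ell+1)}_k \cap I$. If $b \in A^{(\ell+1)} \setminus A^{(\ell+1)}_k$, then every element of $A^{(\ell+1)}_k$ is lex-smaller than $b$, so this $b^{**}$ already serves as $b'$; similarly, if $b \in \Aall \setminus A^{(\ell+1)}$ and some such $b^{**}$ satisfies $b^{**} <^L b$. The substantive case is $b \in \Aall \setminus A^{(\ell+1)}$, so $b = b_1\cdots b_\ell\,0\cdots 0$, and every atom in $A^{(\ell+1)}_k \cap I$ covered by $p$ is lex-larger than $b$. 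A short check shows that any such $b^{**}$ must already differ from $b$ at some position in $[1,\ell]$, otherwise $b^{**}_{\ell+1}+\cdots+b^{**}_n = 0$ would contradict $b^{**} \in A^{(\ell+1)}$.

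In this substantive case the plan is to construct $b' \in A^{(\ell+1)} \cap I$ with $b' <^L b$ and $p \covers b'$ by hand; any such $b'$ satisfies $b' <^L b <^L b^{**}$, and since $A^{(\ell+1)}_k$ is a $<^L$-order ideal in $A^{(\ell+1)}$, this automatically places $b'$ in $A^{(\ell+1)}_k$. The natural candidate is $b' := b - e_{i^\dagger} + e_{j^*}$, where $i^\dagger \in [1,\ell]$ has $b_{i^\dagger} > 0$ and $j^* \in [\ell+1,n]$ has $p_{j^*} > 0$; such a $j^*$ exists because $p \geq b^{**}$ forces $p_{\ell+1}+\cdots+p_n > 0$. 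By construction $b' <^L b$, $b' \leq p$, $b' \in A^{(\ell+1)}$, and $b'$ has coordinate sum $n$. The three possible obstructions are $b' = \jj$, $p - b' = \jj$ (ruining $p \covers b'$ in $\pinch_n$), and $\zz - b' = \jj$ (ruining $b' \in I$); each is triggered by at most one pair $(i^\dagger,j^*)$ given the data $b$, $p$, $\zz$, and a short computation using $p \leq \zz$ shows that the last two obstructions can only fire at the same pair.

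The main obstacle, and the only place $n \geq 4$ is used, is the pigeonhole. Let $m_b$ count the positive entries of $b$ in $[1,\ell]$ and $m_p$ count those of $p$ in $[\ell+1,n]$. The analysis above gives at most two bad pairs in total, so whenever $m_b m_p \geq 3$ a valid candidate exists. The shape of $b$ in the remaining cases is very constrained: $b' = \jj$ would force $m_b = n-1 \geq 3$, so it is vacuous when $m_b \leq 2$; and when $m_b = 1$, so $b$ is concentrated at a single position $i_0 \leq \ell$ with $b_{i_0} = n$, one checks that for $\ell \leq n-2$ the $\zz - b' = \jj$ obstruction would force $\zz$ into a shape which together with $p \leq \zz$ and $\rk(p) = 2$ forces $p = \zz$ and contradicts $m_p = 1$. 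The only truly delicate subcase is $(m_b,m_p) = (1,1)$ with $\ell = n-1$, where I plan to replace the first-order candidate by $(0,\dots,n-c,\dots,0,c)$ (with $n-c$ at coordinate $i_0$ and $c$ at coordinate $n$), for $c \in \{1,2\}$ chosen according to $p_n$; a direct check, using $n \geq 4$ via $n-c \geq 2$, confirms all three $\jj$-obstructions are avoided. This completes condition~(ii) and hence the lemma.
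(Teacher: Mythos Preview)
Your argument is correct, and at the top level it matches the paper: both apply Criterion~III with $A = \Aall \cap I$ in the $<^L$ order, and the substantive content is the verification of condition~(ii). The tactical execution, however, is quite different.

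The paper first reduces to the case $k = |A^{(\ell+1)}|$ by observing that an $(L)$-shelling of $I\hull{A^{(\ell+1)}_{j+1} \cap I}$ restricts to one of $I\hull{A^{(\ell+1)}_{j} \cap I}$, so that $A' = A^{(\ell+1)} \cap I$ and one only needs to treat $b \in \Aall \setminus A^{(\ell+1)}$. Then, instead of a pigeonhole over all pairs $(i^\dagger,j^*)$, the paper chooses the index $i$ so that $\beta_i$ is \emph{minimal} among the positive coordinates of $b$; this single clever choice kills the $b' = \jj$ obstruction outright. For the remaining obstruction $p - b' = \jj$, the paper invokes the already-established Claim~\ref{c:a'l}, which produces a replacement atom in $A^{(\ell+1)}$ that is $<^L$-smaller and still $\prec p$.

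Your route keeps $k$ general throughout (handling the easy case $b \in A^{(\ell+1)} \setminus A^{(\ell+1)}_k$ directly and then using the nice observation that $A^{(\ell+1)}_k$ is a $<^L$-initial segment in $A^{(\ell+1)}$ to land the constructed $b'$ inside $A^{(\ell+1)}_k$), and replaces the ``minimal $\beta_i$ plus Claim~\ref{c:a'l}'' step by a counting argument: at most two bad pairs $(i^\dagger,j^*)$ among $m_b m_p$ candidates, with an ad~hoc fix in the boundary case $(m_b,m_p) = (1,1)$, $\ell = n-1$. This is self-contained (you never call Claim~\ref{c:a'l}), at the cost of a longer case analysis. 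Two minor points: the third obstruction $\zz - b' = \jj$ is redundant, since $b' \preceq p \preceq \zz$ already forces $b' \in I$ by transitivity of $\preceq$ (the paper notes this explicitly); and the sentence about $b^{**}$ differing from $b$ in $[1,\ell]$ is true but never used in what follows.
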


We remark that Lemma~\ref{l:LS2} implies Lemma~\ref{l:LS1}. Similarly,
Lemmas~\ref{l:Lk} and~\ref{l:Sk} together imply Lemma~\ref{l:LS2}. The reason
why we state Lemmas~\ref{l:LS1} and~\ref{l:LS2} separately is that
Lemma~\ref{l:LS1} is used in the proof of Lemma~\ref{l:LS2}, and this one is
used in the proofs of Lemmas~\ref{l:Lk} and~\ref{l:Sk}.

Assuming the validity of the lemmas we immediately obtain a proof of
Proposition~\ref{p:vn} as described just below the statement of the
proposition. Therefore, it is sufficient to prove the lemmas.

\subsection{Proofs of Lemmas~\ref{l:LS1}-\ref{l:Al}}


%
%
%
%

\begin{proof}[Proof of Lemma~\ref{l:LS1}]
  Let $A := A^L_1 = A^S_1 = \{\aa^+\}$ where $\aa^+ = 0\cdots0n$. We also assume that
  $\aa^+ \in I$
  otherwise we encounter the `empty' case. Thus \lsh-shellability of
  $I\hull{A^L_1 \cap I}$ and \ssh-shellability of
  $I\hull{A^S_1 \cap I}$ coincide with the usual shellability of
  $I\hull{A}$ (since $A$ contains a single atom). We easily observe that the
  interval $[\aa^+,\zz]$ is shellable, since it is isomorphic to $[\OO, \zz -
  \aa^+]$; and $[\OO,\zz - \aa^+]$ is shellable by our assumption. It follows that
  $I\hull{A}$ is shellable by extending every maximal chain of $[\aa^+,\zz]$ by
  $\{\OO\}$ and considering the same order of maximal chains as for shelling
  $[\aa^+,\zz]$.

\end{proof}

For the proof of a next lemma, the following claim will be useful.

\begin{claim}
\label{c:uv102}
  Let $\uu = \omega_1 \cdots\omega_{n-1}0$ be a nonzero element of $\pinch_n$
  with the last coordinate $0$, or $\uu = 1\cdots102$. Then there is $\vv \in A^S$ such that $\vv
  \prec \uu + 1\cdots102$.
\end{claim}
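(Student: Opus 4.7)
The plan is to handle the cases of the hypothesis separately and exhibit an explicit atom $\vv \in A^S$ in each. Setting $\bb := \uu + 1\cdots102$, the task reduces in every case to producing an atom $\vv$ of $\pinch_n$ with $\vv_n \neq 0$, $\vv \neq 1\cdots102$, $\vv \leq \bb$ coordinatewise, and $\bb - \vv \neq \jj$. The main obstacle is that $1\cdots102$ itself is the most natural atom below $\bb$ but is precisely the one we must avoid; and the other tempting candidate $(0,\ldots,0,n)$ fails as soon as $n \geq 5$ because $\bb_n \in \{2,4\}$. Hence the real work is to redistribute the weight of $1\cdots102$ into a different atom that still fits inside the tight envelope imposed by $\bb$, and the right redistribution depends on where $\uu$ has its nonzero coordinates.

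In the first case, $\uu = \omega_1\cdots\omega_{n-1}0$ is nonzero, so $\bb = (\omega_1+1,\ldots,\omega_{n-2}+1,\omega_{n-1},2)$, and $\omega_1 + \cdots + \omega_{n-1}$ is a positive multiple of $n$, which forces $\omega_j \geq 1$ for some $j \leq n-1$. We split on whether $\omega_{n-1} \geq 1$. If $\omega_{n-1} \geq 1$, we take $\vv := (0,1,\ldots,1,2)$; this differs from $1\cdots102$ in the first coordinate, the only nontrivial entry of $\vv \leq \bb$ is $1 \leq \omega_{n-1}$, and $\bb - \vv$ has last coordinate $0$, hence $\bb - \vv \neq \jj$. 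Otherwise $\omega_{n-1} = 0$ and some $\omega_j \geq 1$ with $j \leq n-2$, and we take the atom $\vv$ with $\vv_j = 2$, $\vv_{n-1} = 0$, $\vv_n = 1$, and $\vv_i = 1$ in all remaining positions. Here $\vv \neq 1\cdots102$ since $\vv_n = 1 \neq 2$, the only nontrivial entry of $\vv \leq \bb$ is $2 \leq \omega_j + 1$, and $\bb - \vv$ has $(n-1)$-th coordinate $0$, so $\bb - \vv \neq \jj$.

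In the second case, $\uu = 1\cdots102$ gives $\bb = (2,\ldots,2,0,4)$, and we take $\vv := (0,2,1,\ldots,1,0,2)$. Its coordinate sum is $0 + 2 + (n-4) + 0 + 2 = n$, it lies in $A^S$ because $\vv_n = 2 \neq 0$ and $\vv_2 = 2 \neq 1$, it is dominated by $\bb$ entry by entry, and $\bb - \vv$ has first coordinate $2$, so $\bb - \vv \neq \jj$. In each of the three constructions all four required properties of $\vv$ are immediate from its explicit form; once the correct $\vv$ is written down, the verification is a direct check.
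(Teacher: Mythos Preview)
Your proof is correct and follows the same overall strategy as the paper: a case-by-case explicit construction of $\vv$. Your case split in the first part (on whether $\omega_{n-1}\geq 1$) is different from the paper's (which chooses an index $i$ with $\omega_i\geq 1$ subject to two nested preferences), but yours is cleaner and avoids the secondary preference the paper needs to rule out $\bb-\vv=\jj$; in Case~2 you use $\vv=(0,2,1,\ldots,1,0,2)$ where the paper uses $\vv=1\cdots1003$, and both work equally well.
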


\begin{proof}
   If $\uu = 1\cdots102$, we can set $\vv := 1\cdots1003$, for example.

    Further, we assume $\uu \neq 1\cdots102$. Let $i$ be such that $\omega_i \geq 1$ while
    we prefer $i \neq n-1$ if possible; and furthermore, if we meet the first
    preference, we prefer $\omega_i \neq 2$ if possible. 
    
    If we meet both preferences, we set $\vv := 1\cdots121\cdots101$ where the `2'
    occurs in the $i$th position. In particular $\vv \in A^S$. We also
    have $\uu + 1\cdots102 -
    \vv = \omega_1\cdots\omega_{i-1}(\omega_i +
    1)\omega_{i+1}\cdots\omega_{n-1}1$, which is different from $\jj$ since
    $\omega_i \neq 2$. That is, $\uu + 1\cdots102 \succ \vv$.
    
    If we meet the first preference only, then we still set $\vv :=
    1\cdots121\cdots101$ where the `2' occurs on the $i$th position. This time
    we conclude $\uu + 1\cdots102 -
        \vv\neq \jj$ by realizing that there is $j \neq i,n-1$
    such that $\omega_j \neq 1$ (here we use $n \geq 4$). 
    
    Finally, it we meet no preference, then $\uu = 0\cdots0(r\cdot n)0$ for
    some integer $r$. 
    In this case, we set $\vv = 1\cdots1021$ and we
    have $\uu + 1\cdots102 - \vv = 0\cdots01(rn-2)1 \neq \jj$.

\end{proof}


\begin{proof}[Proof of Lemma~\ref{l:LS2}]
We have $\aa^L_1 = \aa^S_1 = 0\cdots0n$ and $\aa^L_2 = \aa^S_2 = 0\cdots01(n-1)$.
We set $A := \{\aa^L_1\}$ and $A^+ := \{\aa^L_1,\aa^L_2\}$. With this setting, our only task is to show that
$I\hull{A^+ \cap I}$ is \lsh-shellable (which coincide with
\ssh-shellability). We can assume that $\aa^L_2 \in I$,
otherwise $A^+ \cap I$ coincides with $A \cap I$ and we conclude by
Lemma~\ref{l:LS1}. We can also assume that $\aa^L_1 \in I$; otherwise $A^+ \cap I$
contains a single atom only and we obtain \lsh-shellability of
$I\hull{A^+ \cap I}$ in the same way as in the proof of Lemma~\ref{l:LS1}.

Altogether, we assume $\aa^L_1, \aa^L_2 \in I$ and therefore our task simplifies to
showing \lsh-shellability of $I\hull{A^+}$.
We are going to use Theorem~\ref{t:cii} for this task. For consistent notation,
we set $Q := I\hull{A^+} \setminus I\hull{A}$ and $\aa^+ = \aa^L_2$ (we prefer
using bold $\aa^+$ rather than $a^+$ in Theorem~\ref{t:cii} emphasizing that $\aa^+
\in \pinch_n$). We also recall that $I(\aa^+) = [\aa^+,\zz]$ and $\Aall(\aa^+)$ is
the set of all atoms of $I(\aa^+)$ whereas $A(\aa^+)$ is the set of only those
atoms of $I(\aa^+)$ which belong to $I\hull{A}$ as well. We need to check the
conditions of Theorem~\ref{t:cii}.

The first condition, $A$-shellability of $I\hull{A}$ just follows from
Lemma~\ref{l:LS1}.

\medskip

For checking the remaining two conditions, we need more intrinsic description
of $Q$. Note that in our notation $(\qq - \aa^+)^{(n)}$ denotes the last
coordinate of $\qq - \aa^+$. Consult Figure~\ref{f:QLS2} while following the
proof of the next claim and the rest of the proof of the lemma.

\begin{figure}
\begin{center}
  \includegraphics{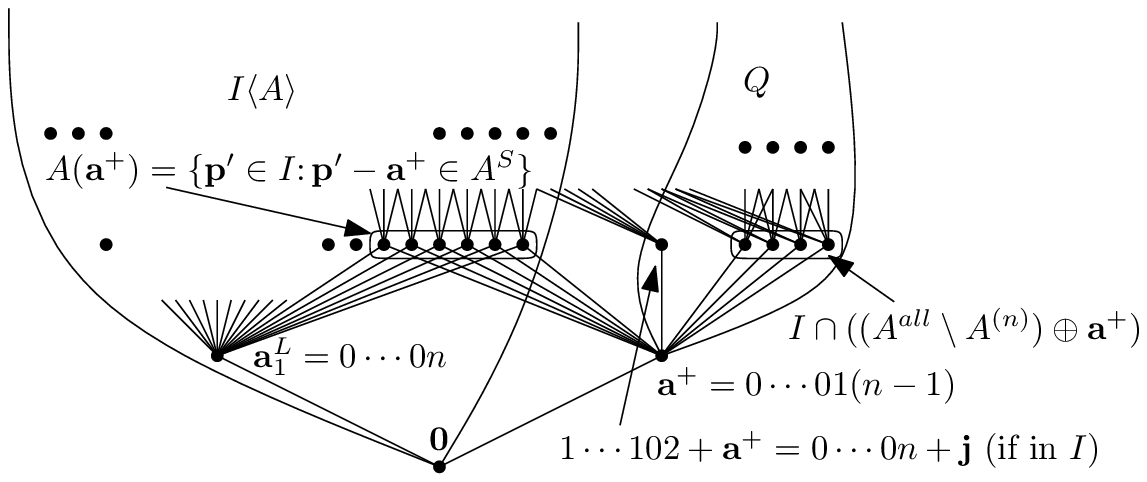}
\end{center}
\caption{Schematic drawing of $I\hull{A^+}$ in Lemma~\ref{l:LS2}.}
\label{f:QLS2}
\end{figure}

\begin{claim}
\label{c:QLS2}
  We have the following description of $Q$.
$$
Q = \{\qq \in I\hull{A^+}\colon \qq \succeq \aa^+, (\qq - \aa^+)^{(n)} = 0
\hbox{ or } \qq - \aa^+ = 1\cdots102 \}.
$$
\end{claim}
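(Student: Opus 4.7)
\medskip

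\noindent\textbf{Proof plan for Claim~\ref{c:QLS2}.} The plan is to simply unfold the two definitions of the pinched order $\preceq$ involved, noting that the only subtle point is the failure mode coming from the forbidden vector $\jj$. By construction, $Q = I\hull{A^+}\setminus I\hull{A}$, so $\qq\in Q$ iff $\qq \neq \OO$, $\qq\succeq \aa^+$ or $\qq \succeq \aa^L_1$, and $\qq\not\succeq \aa^L_1$. In particular, every $\qq\in Q$ satisfies $\qq \succeq \aa^+$, so the condition $\qq \succeq \aa^+$ on the right-hand side is automatic. What remains is to characterize, for $\qq \succeq \aa^+$, when $\qq \not\succeq \aa^L_1$.

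Recall $\aa^L_1 = 0\cdots 0n$ and $\aa^+ = \aa^L_2 = 0\cdots 01(n-1)$. The first step is to observe that $\qq\succeq \aa^L_1$ in $\pinch_n$ means componentwise $\qq \geq \aa^L_1$ together with $\qq - \aa^L_1 \neq \jj$. Hence $\qq \not\succeq \aa^L_1$ splits into the two (non-exclusive) cases: either $\qq \not\geq \aa^L_1$, or $\qq - \aa^L_1 = \jj$. The next step is to analyze each case using $\qq \geq \aa^+$, which already forces the last coordinate of $\qq$ to be at least $n-1$.

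In the first case, $\qq \not\geq \aa^L_1$ is equivalent to the last coordinate of $\qq$ being strictly less than $n$, which combined with $\qq \geq \aa^+$ forces it to equal $n-1$; equivalently, $(\qq - \aa^+)^{(n)} = 0$. In the second case, $\qq - \aa^L_1 = \jj$ gives $\qq = \aa^L_1 + \jj = 1\cdots 11(n+1)$, so
\[
\qq - \aa^+ \;=\; 1\cdots 11(n+1) - 0\cdots 01(n-1) \;=\; 1\cdots 102.
\]
This matches exactly the disjunction in the claim. Conversely, if $\qq \succeq \aa^+$ and $(\qq-\aa^+)^{(n)} = 0$, the last coordinate of $\qq$ is $n-1<n$, so $\qq\not\geq\aa^L_1$; and if $\qq-\aa^+ = 1\cdots 102$, then $\qq - \aa^L_1 = \jj$. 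In either situation $\qq\not\succeq \aa^L_1$, so $\qq\in Q$.

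There is no real obstacle here: the whole content is a careful bookkeeping of what the pinched relation $\succeq$ forbids, and the only place one must be a little careful is remembering the $\qq - \aa^L_1 = \jj$ failure mode, which is precisely what produces the exceptional element $\qq - \aa^+ = 1\cdots 102$ in the description. The claim then serves as a concrete description of $Q$ to be used in the case analysis for conditions (ii) and (iii) of Theorem~\ref{t:cii} in the rest of the proof of Lemma~\ref{l:LS2}.
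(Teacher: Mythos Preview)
Your proof is correct and follows essentially the same approach as the paper's own proof: both reduce membership in $Q$ to the condition $\qq \succeq \aa^+$ together with $\qq \not\succeq \aa^L_1$, and then split the latter into the coordinatewise failure $\qq \not\geq \aa^L_1$ (forcing the last coordinate of $\qq-\aa^+$ to vanish) and the pinched failure $\qq - \aa^L_1 = \jj$ (producing the exceptional $\qq - \aa^+ = 1\cdots102$). Your write-up is slightly more explicit in spelling out both directions, but the argument is the same.
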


\begin{proof}
  If $\qq \in Q$, then it must satisfy $\qq \succeq \aa^+$. Therefore we can
  consider $\qq$ satisfying $\qq \succeq \aa^+$ and our task is to determine
  whether $\qq \in Q$.

  Let us first consider the case $(\qq - \aa^+)^{(n)} = 0$. Then $\qq^{(n)} =
  (\aa^+)^{(n)} = n - 1$, and therefore $\qq \not\succeq \aa^L_1 = 0\cdots0n$. We
  conclude $\qq \in Q$ since $\qq \notin I\hull{A}$.

  Now, let us consider the case $(\qq - \aa^+)^{(n)} \geq 1$. Then $\qq \geq
  \aa^L_1$. We deduce $\qq \succeq \aa^L_1$ unless $\qq = \aa^L_1 + \jj$. That is
  $\qq \notin Q$ unless $\qq = (0\cdots0n) + (1\cdots1)= 1\cdots1(n+1)$. 
  In this case $\qq - \aa^+ = 1\cdots102$.
\end{proof}

Using Claim~\ref{c:QLS2}, it is easy to check the second condition in
Theorem~\ref{t:cii}. 

We first observe that Claim~\ref{c:QLS2} implies the following description of
$A(\aa^+)$: 
\begin{equation}
\label{e:Aa+}
 A(\aa^+) = \{\pp' \in I\colon \pp' - \aa^+ \in A^S\}.
\end{equation}
Indeed, $A(\aa^+)$ consists of those elements in $I$ covering $\aa^+$ 
which do not
belong to $Q$. By Claim~\ref{c:QLS2} and the definition of $A^S$ we obtain
that $A(\aa^+)$ consists of those elements $\pp' \in I$ covering 
$\aa^+$ such that $\pp' - \aa^+ \in A^S$. This immediately yields the required
description \eqref{e:Aa+} since if $\pp' - \aa^+ \in A^S$, then $\pp' \covers
\aa^+$.


Now, by the assumptions of the lemma the interval $[\OO, \zz - \aa^+]$ is
\ssh-shellable. This interval is isomorphic to $I(\aa^+)$ by adding $\aa^+$.
Therefore, using~\eqref{e:Aa+}, this isomorphism induces a shelling of
$I(\aa^+)$ required by condition~(ii) of Theorem~\ref{t:cii}.


\medskip

Finally, we want to check condition (iii) of Theorem~\ref{t:cii}. Therefore, we
are given $\qq \in Q$ and $\pp \in I\hull{A}$ such that $\pp$ covers $\qq$. Our
task is to show that $\pp \in I(\aa^+)\hull{A(\aa^+)}$.
Recalling~\eqref{e:Aa+}, our task is to
 show that there is $\pp' \in I$ such that $\pp' - \aa^+ \in A^S$ and $\pp
 \succeq \pp'$. Note that the condition $\pp' \in I$ follows from $\pp
  \succeq \pp'$, thus we do not need to check it in the following verification
  separately.

 A natural candidate for $\pp'$ is the element $\pp'_{cand} := \aa^+ + (\pp -
 \qq)$. We have $\pp'_{cand} \preceq \pp$ since $\pp - \pp'_{cand} = \qq - \aa^+$
 and $\qq \succeq \aa^+$. Furthermore, $\pp'_{cand} - \aa^+ = \pp - \qq$; therefore
 we are done if $\pp - \qq \in A^S$. See Figure~\ref{f:LS2_third}, on the left.

 It remains to consider $\pp - \qq \notin A^S$. In this case, we have to choose
 $\pp'$ different from $\pp'_{cand}$. We further distinguish two cases 
 whether $\qq - \aa^+ = 1\cdots102$ or $(\qq - \aa^+)^{(n)} = 0$ (which is
 sufficient due to Claim~\ref{c:QLS2} using $\qq \in Q$) while we keep in mind
 that $\pp - \qq \notin A^S$. See Figure~\ref{f:LS2_third}, in the middle and
 on the right.

 \begin{figure}
\begin{center}
  \includegraphics{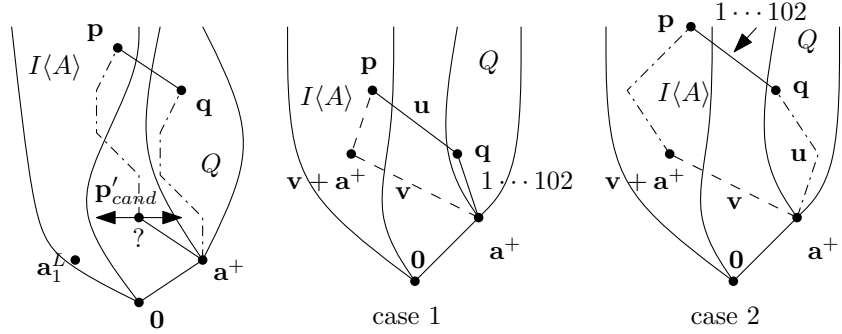}
\end{center}
\caption{Verifying condition~(iii) of Theorem~\ref{t:cii}. If $\pp - \qq$
does not belong to $A^S$ (on the left), then we need to distinguish two further cases
(in the middle and on the right). Label of an edge (a path) $\ss\ttt$ is given by
$\ttt - \ss$.}
\label{f:LS2_third}
\end{figure}

 \begin{enumerate}
   \item
    First let us assume that $\qq - \aa^+ = 1\cdots102$.


    We let $\uu := (\pp - \qq)$. In particular, either $\uu = 1\cdots102$, or 
    $\uu^{(n)} = 0$ since $\pp - \qq \notin A^S$. By
    Claim~\ref{c:uv102} there is $\vv \in A^S$ such that $\vv \prec \uu +
    1\cdots102$. Let $\pp' := \vv + \aa^+$. Then $\pp' - \aa^+ \in A^S$ and also
    $\pp' \prec \pp$ since $\pp - \pp' = (\pp - \qq) + (\qq - \aa^+) - \vv = \uu +
    1\cdots102 - \vv$ and $\uu + 1\cdots102 \succ \vv$. 
    
 \item
   Now we assume $(\qq - \aa^+)^{(n)} = 0$. Since $\pp \notin Q$,
   Claim~\ref{c:QLS2} implies that $(\pp -
   \aa^+)^{(n)} > 0$ (and $\pp - \aa^+ \neq 1\cdots102$). Therefore $(\pp -
   \qq)^{(n)} > 0$. Since $\pp - \qq \notin A^S$, we conclude $\pp - \qq =
   1\cdots102$. (This also implies that $\qq \neq \aa^+$.)

   Now let $\uu := \qq - \aa^+$. By Claim~\ref{c:uv102}, there is $\vv \in A^S$
   such that $\uu + 1\cdots102 \succ \vv$. We set $\pp' := \vv + \aa^+$. Then
   $\pp' - \aa^+ \in A^S$ and also $\pp \succ \pp'$ since $\pp - \pp' = (\pp -
   \qq) + (\qq - \aa^+) + \vv = 1\cdots102 + \uu - \vv \succ 0$. 

 \end{enumerate}

We have checked all conditions of Theorem~\ref{t:cii}. This concludes the proof
of the lemma.
\end{proof}

The following claim will be useful for the proof of the next lemma. 
Item (ii) of the claim is trivial; however, it will be useful to
refer to it as stated in the claim.
\begin{claim} \
  \label{c:a'}
  \begin{enumerate}[(i)]
    \item Let $\aa \in \Aall$ such that $\aa \neq \aa^L_1$. Then there
      is $\aa' \in \Aall$ such that $\aa' <^L \aa$ and $\aa'
      \prec \aa + \jj$. In addition, we can require $\aa' \neq 1\cdots102$.
    \item Let $\aa := \aa^L_1$. Then $\aa' \prec
      \aa + \jj$ for $\aa' = \aa^L_2$. 
  \end{enumerate}
\end{claim}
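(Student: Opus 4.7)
My plan is to dispatch part (ii) by a direct computation and to handle part (i) via an explicit construction with a small case analysis.

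For part (ii), observe that $\aa + \jj = 1\cdots1(n+1)$ and $\aa^L_2 = 0\cdots01(n-1)$, hence $(\aa + \jj) - \aa^L_2 = 1\cdots102$. This is a non-negative vector distinct from $\jj$, giving $\aa^L_2 \prec \aa + \jj$.

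For part (i), write $\aa = \alpha_1\cdots\alpha_n$ and let $i$ be the smallest index with $\alpha_i \geq 1$. Since $\aa \neq \aa^L_1 = 0\cdots0n$, at least one coordinate strictly before position $n$ is nonzero, so $i < n$. My default candidate $\aa'$ is obtained from $\aa$ by decreasing the $i$-th coordinate by $1$ and increasing the $n$-th coordinate by $1$. In the generic case this works: the first $i-1$ coordinates of $\aa$ and $\aa'$ both vanish while the $i$-th coordinate of $\aa'$ drops strictly, yielding $\aa' <^L \aa$; the bound $\aa' \leq \aa + \jj$ holds coordinate-wise because the only position where $\aa'$ exceeds $\aa$ is the $n$-th, and there $\alpha_n + 1$ equals $(\aa + \jj)_n$; and $(\aa + \jj) - \aa'$ has a $2$ at position $i$ and a $0$ at position $n$, so it differs from $\jj$, yielding $\aa' \prec \aa + \jj$.

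The only remaining concerns are whether $\aa' = \jj$ or $\aa' = 1\cdots102$. A coordinate-matching argument shows that $\aa' = \jj$ forces $\alpha_n = 0$, $\alpha_i = 2$, and $\alpha_j = 1$ for all other $j$, which combined with the minimality of $i$ pins down $\aa = 21\cdots10$; analogously $\aa' = 1\cdots102$ forces $\aa = 21\cdots101$. For these two exceptional patterns I switch to a hand-made alternative: for $\aa = 21\cdots10$ I take $\aa' = 1\cdots120$ (moving one unit from position $1$ to position $n-1$), and for $\aa = 21\cdots101$ I take $\aa' = 1\cdots1201$ with the $2$ at position $n-2$ (moving one unit from position $1$ to position $n-2$; this uses $n \geq 4$). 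Both alternatives carry a $2$ in the interior, so they are distinct from $\jj$ and from $1\cdots102$; they are lex-smaller than $\aa$ because their first coordinate is $1 < 2$; and direct computation gives $\aa' \leq \aa + \jj$ together with $(\aa + \jj) - \aa' \neq \jj$.

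The only real work is the coordinate-matching that isolates the two exceptional patterns; once those are identified, all remaining verifications are mechanical, and I expect no significant obstacle.
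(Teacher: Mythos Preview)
Your proof is correct and follows essentially the same approach as the paper: you both decrease the first nonzero coordinate by one and increase the last coordinate by one, then handle the two exceptional patterns $\aa = 21\cdots10$ and $\aa = 21\cdots101$ with the same hand-picked replacements $1\cdots120$ and $1\cdots1201$. The only cosmetic difference is that you spell out the coordinate-matching that identifies the exceptions and the verification of $\aa' \prec \aa + \jj$ in slightly more detail.
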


\begin{proof}
  Let us start with item (i). Let $\aa =
  0\cdots0\alpha_{\ell}\cdots\alpha_n$ where $\alpha_\ell \neq 0$. That is, we
  require $\aa' \prec 1\cdots1(\alpha_{\ell} + 1)\cdots(\alpha_n + 1)$.
  We have
  $\ell \leq n - 1$ since $\aa \neq \aa^L_1 = 0\cdots0n$. Let $\bb := 0\cdots0(\alpha_\ell -
  1)\alpha_{\ell + 1}\cdots\alpha_{n-1}(\alpha_n + 1)$. If $\bb \neq \jj,
  1\cdots102$, then
  $\bb <^L \aa$, and thus we can set $\aa' := \bb$.
  (Note that $\bb \leq \aa' + \jj$ and $\bb + \jj \neq \aa + \jj$
  implying $\bb \prec \aa + \jj$.) If $\bb = \jj$, then $\aa =
  21\cdots10$ and we can, for example, set $\aa' =
  1\cdots120$. If $\bb = 1\cdots102$, then $\aa=21\cdots101$ and we can set
  $\aa' = 1\cdots1201$.

  Item (ii) is trivial just since by definition of $\aa^L_1$ and $\aa^L_2$ we
  have $\aa^L_1 = 0\cdots0n$ and $\aa^L_2 =
  0\cdots01(n-1)$. 
\end{proof}

\begin{proof}[Proof of Lemma~\ref{l:Lk}]
  We set $A := A^L_{k-1} \cap I$ and $A^+ := A^L_k \cap I$; we also set $\aa^+ =
  \aa^L_k$. Our task is to show that if $A^+$ is nonempty, then $I\hull{A^+}$ is
  \lsh-shellable. 
  
  We can assume that $\aa^+ \in I$ otherwise \lsh-shellability of
  $I\hull{A^+}$ coincides with \lsh-shellability of $I\hull{A}$ which we
  conclude from the assumptions of the lemma (if $A^+ \neq \emptyset$).

  We can also assume that $A \neq \emptyset$, otherwise $I\hull{A^+}$
  has a single atom only and we derive the lemma analogously as
  Lemma~\ref{l:LS1}. In particular, from the assumptions of the lemma we know
  that assertion $(\As_{k-1}^L)$ is valid, and therefore we have that $I\hull{A}$ is \lsh-shellable.
  
  Our task is to use Theorem~\ref{t:ci} for verifying \lsh-shellability of
  $I\hull{A^+}$. We set $Q := I\hull{A^+}\setminus I\hull{A}$. We need to verify
  assumptions of Theorem~\ref{t:ci}.

  We have already observed that item (i) of Theorem~\ref{t:ci} is satisfied;
  that is, that $I\hull{A}$ is \lsh-shellable.

  For verifying other items, we need more intrinsic definition of $Q$. We will
  assume that $\aa^+ = 0\cdots0\alpha_{\ell}\alpha_{\ell + 1}\cdots\alpha_n$
  where $\ell$ is the smallest integer such that $\alpha_\ell > 0$. Note that
  $\ell \leq n-1$ since $k \geq 3$.

\begin{claim}
\label{c:QLk}
  We have the following description of $Q$.
  \begin{enumerate}[(i)]
    \item
$Q = \{\qq \in I\hull{A^+}\colon \qq \succeq \aa^+, (\qq - \aa^+)^{(\ell + 1)} =
  \underbrace{0\cdots0}_{n-\ell}
\}$ if $\aa^+ \neq 201\cdots1$; and
    \item
      $Q = \{\qq \in I\hull{A^+}\colon \qq \succeq \aa^+, (\qq - \aa^+)^{(2)}\in\{\underbrace{0\cdots0}_{n-1},\underbrace{10\cdots0}_{n-1} \}\}$ if $\aa^+
      =201\cdots1$.
 \end{enumerate}

\end{claim}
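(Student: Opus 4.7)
The plan is to unfold the definition $Q = I\hull{A^+}\setminus I\hull{A}$ directly. An element $\qq$ belongs to $Q$ iff $\qq \succeq \aa^*$ for some atom $\aa^*\in A^+$ while $\qq \not\succeq \aa'$ for every $\aa'\in A$. Since the only atom in $A^+\setminus A$ is $\aa^+$, the first condition is equivalent to $\qq \succeq \aa^+$, so setting $\bb := \qq - \aa^+$ (with $\bb \in \ver_n$ non-negative and $\bb \neq \jj$) the task reduces to showing that the coordinate condition stated in the claim is equivalent to $\qq \not\succeq \aa'$ for every $\aa' \in \Aall$ with $\aa' <^L \aa^+$. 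Recall $\aa^+ = 0\cdots 0 \alpha_\ell \cdots \alpha_n$ with $\alpha_\ell > 0$ and $\ell \leq n-1$, since $k \geq 3$ excludes the two smallest atoms.

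For the forward inclusion, assume the stated coordinate condition holds. In case~(i), $\bb^{(\ell+1)} = 0\cdots 0$ forces $\qq_j = \alpha_j$ for $j > \ell$. Given an atom $\aa' \leq \qq$ with smallest nonzero coordinate at position $m$, the constraint $\aa' <^L \aa^+$ forces $m \geq \ell$, because $\aa^+_j = 0$ for $j < \ell$. If $m > \ell$, then $\sum_{j \geq m} \aa'_j = n$ while $\aa'_j \leq \alpha_j$ for $j \geq m$ gives $n \leq \sum_{j \geq m}\alpha_j < \sum_{j \geq \ell}\alpha_j = n$, a contradiction. If $m = \ell$, then $\sum_{j>\ell}\aa'_j \leq n - \alpha_\ell$ forces $\aa'_\ell \geq \alpha_\ell$, and combined with $\aa' <^L \aa^+$ this forces $\aa' = \aa^+$, again a contradiction. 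In case~(ii) with $\bb^{(2)} = 10\cdots 0$, one has $\qq = (2 + b_1)\,1\cdots 1$, and any atom $\aa' \leq \qq$ strictly lex-below $201\cdots 1$ is forced to be $\jj$, which is excluded from $\Aall$.

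For the reverse inclusion, assume the coordinate condition fails, so there exists $i > \ell$ with $\bb_i > 0$. The canonical candidate is $\aa' := \aa^+ - e_\ell + e_i$; by construction it lies in $\ver_n$, is non-negative, lex-below $\aa^+$, and below $\qq$. What remains is to ensure $\aa' \neq \jj$ (so $\aa' \in \Aall$) and $\qq - \aa' \neq \jj$ (so $\qq \succeq \aa'$). The main obstacle is that both degeneracies can occur and must be handled: $\aa' = \jj$ pins down $\aa^+ = \jj + e_\ell - e_i$, forcing $\ell = 1$ and $\aa^+$ to have the shape $(2, 1, \ldots, 0, \ldots, 1)$ with the $0$ at position $i$; while $\qq - \aa' = \jj$ pins down $\bb = \jj - e_\ell + e_i$.

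To resolve the degeneracies, note first that when $\qq - \aa' = \jj$, the pinned $\bb$ satisfies $\bb_{i'} = 1$ for every $i' \in \{\ell+1, \ldots, n\}\setminus\{i\}$. For $\ell \leq n - 2$ any such $i'$ yields a valid replacement $\aa^+ - e_\ell + e_{i'}$; for $\ell = n - 1$ only $i = n$ is available, and I would instead use the two-unit move $\aa'' := \aa^+ - 2 e_{n-1} + 2 e_n$, valid because $k \geq 3$ rules out $\aa^+ \in \{\aa^L_1, \aa^L_2\}$ and thus gives $\alpha_{n-1} \geq 2$. When $\aa' = \jj$ is forced, meaning $i$ is the unique index above $\ell$ with $\bb_i > 0$ and coincides with the zero-position $i_0$ of $\aa^+$, the case~(i) hypothesis $\aa^+ \neq 201\cdots 1$ gives $i_0 \geq 3$, and the lateral move $\aa'' := \aa^+ + e_{i_0} - e_j$ for any $j \in \{2, \ldots, i_0 - 1\}$ works: it keeps the $2$ at position $1$ (so $\aa'' \neq \jj$), is lex-below $\aa^+$ (first difference at position $j$), sits below $\qq$, and $\qq - \aa''$ has a zero at every coordinate $k \notin \{1, j, i_0\}$, which exists since $n \geq 4$. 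In case~(ii) the analogous two-unit move $\aa'' := \aa^+ - 2 e_1 + 2 e_2 = 0\,2\,1\cdots 1$ handles the remaining subcase $\bb_2 \geq 2$ in the same manner.
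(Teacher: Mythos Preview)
Your overall strategy mirrors the paper's: both reduce to the canonical candidate $\aa' = \aa^+ - e_\ell + e_i$ and then repair the two degeneracies $\aa' = \jj$ and $\qq - \aa' = \jj$. The forward inclusion (coordinate condition $\Rightarrow \qq \in Q$) is cleanly argued and matches the paper's reasoning. The difference is in the repairs for the reverse inclusion: the paper invokes the auxiliary Claim~\ref{c:a'} when $\qq - \aa' = \jj$, whereas you construct replacements by hand (an alternate index $i'$, a two-unit shift, or a lateral move). That is a legitimate alternative and avoids appealing to a separate lemma.

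There is, however, a small but genuine gap in your repair for the case $\qq - \aa' = \jj$ with $\ell \leq n-2$. You assert that \emph{any} $i' \in \{\ell+1,\dots,n\}\setminus\{i\}$ yields a valid replacement $\aa'' := \aa^+ - e_\ell + e_{i'}$, but you do not check that $\aa'' \neq \jj$. This can fail: for $n=4$, take $\aa^+ = 2101$ (so $\ell = 1$, the zero sits at position $3$), $\bb = 0211$, and initial choice $i = 2$; then $\qq - \aa' = \jj$, and the replacement with $i' = 3$ gives $\aa'' = 1111 = \jj$. The fix is easy: at most one value of $i'$ can produce $\aa'' = \jj$ (namely the zero-position of $\aa^+$ when $\aa^+$ has shape $21\cdots101\cdots1$, forcing $\ell = 1$), and since $n \geq 4$ there are at least $n-2 \geq 2$ choices available in $\{2,\dots,n\}\setminus\{i\}$, so a good $i'$ always exists. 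You should replace ``any such $i'$'' by ``an appropriately chosen $i'$'' and add this one-line count; the same caveat applies in case~(ii) when $\bb_j > 0$ for some $j \geq 3$ and $i' = 2$ must be avoided. With that correction your argument is complete.
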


\begin{proof}
  First, we assume that $\aa^+ \neq 201\cdots1$ and we want to prove item~(i). Let $\qq \succeq \aa^+$. Our
  task is to determine, whether $\qq \in Q$. We also let $\qq - \aa^+ =
  \kappa_1\cdots\kappa_n$.

  We need to show two inclusions. 
  \begin{itemize}
\item
  For the first one, we assume that  $(\qq-\aa^+)^{(\ell + 1)} \neq
  \underbrace{0\cdots0}_{n-\ell}$, and
  we want to show that $\qq \notin Q$. That is, we want to find an atom from
  $A$ which is below $\qq$.  In this case, we have 
  $i \in \{\ell+1, \dots, n\}$ such that $\kappa_i \neq 0$. Let 
  $$\aa :=
  0\cdots0(\alpha_{\ell} - 1)\alpha_{\ell + 1}\cdots\alpha_{i-1}(\alpha_i +
  1)\alpha_{i+1}\cdots\alpha_n.$$
  We have $0 < \aa < \qq$. 
  
  If $\aa \neq \jj$ and $\qq -
  \aa \neq \jj$, then $0 \prec \aa \prec \qq$, and thus $\aa$ is
  the required atom of $A$ since $\aa$ precedes $\aa^+$ in the $<^L$
  order.

  If $\aa = \jj$, then $\aa^+ = 21\cdots101\cdots1$ where the `0' appears in the
  $i$th position ($i \geq 3$ since $\aa^+ \neq 201\cdots1$). In particular, if
  $\aa' = 201\cdots1$, then $\qq \geq \aa'$
  (since $\kappa_i \geq 1$) and $\aa'$ precedes $\aa^+$ in the $<^L$ order. 
  Therefore, $\aa'$ is the required atom of $A$ unless $\qq = \aa' +
  \jj = 312\cdots2$. In this case, we can use $1\cdots102$ for example. 
  
  If $\qq - \aa = \jj$, and $\aa \neq \jj$ we consider $\aa' \prec \aa + \jj = \qq$ obtained from
  Claim~\ref{c:a'}. We also have $\aa' <^L \aa^+$. This follows from
  Claim~\ref{c:a'}~(i) by $\aa' <^L \aa <^L \aa^+$ if $\aa \neq \aa^L_1$. 
  It follows from Claim~\ref{c:a'}~(ii) if $\aa = \aa^L_1$ since
  $\aa' <^L \aa^L_3 \leq^L \aa^+$. 
  

\item
  For the second inclusion, we assume that  $(\qq-\aa^+)^{(\ell + 1)} =
  \underbrace{0\cdots0}_{n-\ell}$, and we need to show that $\qq \in Q$; that
  is, we need to show that $\aa \not\preceq \qq$ for any $\aa \in A$.

  Let $\aa = \alpha'_1\cdots\alpha'_n \in A$. Since $\aa <^L \aa^+$, we have
  that $\alpha'_1 = \cdots = \alpha'_{\ell - 1} = 0$ and $\alpha'_{\ell} \leq
  \alpha_{\ell}$. This implies that there is $i \in \{\ell + 1, \dots, n\}$
  such that $\alpha'_i > \alpha_i$ (note that $\alpha_1 + \cdots + \alpha_n = n
  = \alpha'_1 + \cdots + \alpha'_n$ since both $\aa^+$ and $\aa$ are atoms;
  note also that we get a strict inequality since $\aa^+ \neq \aa$). This implies $\qq \not\succeq \aa$ since
  $\qq$ and $\aa^+$ agree in the $i$th position.
 \end{itemize}


Now, we want to prove item~(ii). That is, we assume that $\aa^+ = 201\cdots1$.
Similarly as before, let $\qq \succeq \aa^+$. Our
  task is to determine, whether $\qq \in Q$. We also let $\qq - \aa^+ =
  \kappa_1\cdots\kappa_n$. We again need to show two inclusions.

\begin{itemize}
\item
  For the first one, we assume that  $(\qq-\aa^+)^{(2)} \notin 
  \{\underbrace{0\cdots0}_{n-1}, \underbrace{10\cdots0}_{n-1}\}$, and we want to show that $\qq \notin Q$.

  If we and apply the reasoning from item~(i), we obtain that 
  $\qq \notin Q$ if $\kappa_i > 0$ for some $i \geq 3$.

  It remains to consider the case $(\qq-\aa^+)^{(2)} = \kappa_20\cdots0$ where
  $\kappa_2 \geq 2$. In this case, we set $\aa = 021\cdots1$. Thus $\qq >
  \aa$. In addition, $\qq \neq \aa + \jj$ since $(\qq-\aa^+)^{(2)} =
  \kappa_20\cdots0$. Thus, $\qq \succ \aa$. We also have $\aa <^L \aa^+$, and
  therefore $\qq \notin Q$.
\item
  For the second inclusion, we assume that $(\qq-\aa^+)^{(2)} \in 
    \{\underbrace{0\cdots0}_{n-1}, \underbrace{10\cdots0}_{n-1}\}$ and we need
    to show that $\aa \not\preceq \qq$ for any $\aa \in A$.

  Let $\aa = \alpha'_1\cdots\alpha'_n \in A$. Since $\aa <^L \aa^+$, we have
  that $\alpha'_1 \leq 2$. This implies that 
  either is $i \in \{3, \dots, n\}$
  such that $\alpha'_i > \alpha_i = 1$, 
  or $\alpha'_2 > \alpha_2 = 0$ and $\alpha'_i = \alpha_i = 1$ for $i \geq 3$.
  
  In the first case we have $\qq \not\succeq \aa$ since
  $\qq$ and $\aa^+$ agree in the $i$th position. In the second case, we have
  $\alpha_2 \geq 2$ since $\aa \neq \jj$. Therefore, again $\qq \not\succeq
  \aa$, since $\qq$ exceeds $\aa^+$ in the second position at most by $1$. 
\end{itemize}

\end{proof}

\medskip

Now we verify condition~(ii) of Theorem~\ref{t:ci}. Let $J = [\aa^+,\qq]$ be an
interval where $\qq \in Q$. We recall that $[\aa^+,\qq]$ is isomorphic to
$[\OO,\qq - \aa^+]$.

If $\aa^+ \neq 201\cdots1$, then by Claim~\ref{c:QLk}, $J$ is isomorphic to an
interval in $\ver_{n,\ell}$ (by forgetting last $n - \ell$ coordinates of $J
\ominus \aa^+$). Therefore, $J$ is shellable by Proposition~\ref{p:non_pinch}.

If $\aa^+ = 201\cdots1$, then $Q$ has a very simple structure by
Claim~\ref{c:QLk}; see Figure~\ref{f:Q(ii)}. We could check that every interval
in $Q$ in this case is a modular lattice and deduce shellability of $Q$ in the
same way as in Proposition~\ref{p:non_pinch}, using~\cite[Theorem
3.7]{bjorner80}. However, this is perhaps just an overkill in this case and
the shelling order of every interval can be easily found explicitly.

 \begin{figure}
\begin{center}
  \includegraphics{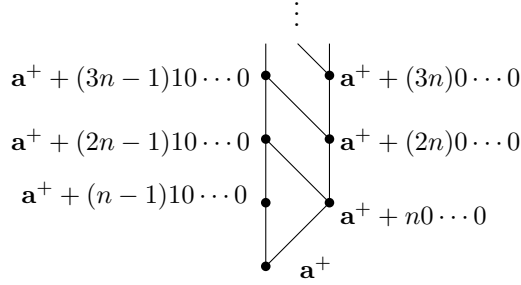}
\end{center}
\caption{The structure of $Q$ in item~(ii) of Claim~\ref{c:QLk} (or
Claim~\ref{c:QSk}).}
\label{f:Q(ii)}
\end{figure}

\medskip

We continue with the verification of condition~(iii) of Theorem~\ref{t:ci}; that is,
we verify the edge-falling property. Let $\qq \in Q$, $\qq' \in Q \cup \{\OO\}$ and $\pp \in
I\hull{A}$ be such that $\pp \covers \qq$ and $\qq \covers \qq'$. Our task is
to find $\pp' \in I\hull{A}$ such that $\pp \covers \pp' \covers \qq'$.

Natural candidate for $\pp'$ is $\pp'_{cand} := \qq' + (\pp - \qq)$. 
We have $\pp \covers \pp'_{cand} \covers \qq'$.
If $\aa^+ \neq 201\cdots1$, we immediately obtain that $\pp'_{cand} \in I\hull{A}$ from
Claim~\ref{c:QLk}~(i) as follows. We know that $(\qq - \qq')^{(\ell + 1)} =
\underbrace{0\cdots0}_{n-\ell}$ by Claim~\ref{c:QLk}~(i) since $\qq - \qq' =
(\qq - \aa^+) - (\qq' - \aa^+)$. Therefore $\pp^{(\ell + 1)} =
(\pp'_{cand})^{(\ell + 1)}$, and it follows by Claim~\ref{c:QLk}~(i) that
$\pp'_{cand}$ indeed belongs to $I\hull{A}$. Therefore we can set $\pp' := \pp'_{cand}$.

If $\aa^+ = 201\cdots1$, we need to be more careful. We have $\pp^{(2)} -
(\pp'_{cand})^{(2)} = \qq^{(2)} - (\qq')^{(2)}$. Therefore, if
$\qq^{(2)} =  (\qq')^{(2)}$, then we obtain $\pp'_{cand} \in I\hull{A}$
by Claim~\ref{c:QLk}~(ii) and we can set $\pp' := \pp'_{cand}$. However, it
might also occur that $(\qq - \aa^+)^{(2)} = 10\cdots0$ and $(\qq' - \aa^+)^{(2)} =
0\cdots0$ by Claim~\ref{c:QLk}~(ii). In this case, we focus on $(\pp -
\qq)^{(2)}$. Claim~\ref{c:QLk}~(ii) implies that $(\pp -
\qq)^{(2)} \neq 0\cdots0$. If $(\pp -
\qq)^{(2)} \neq 10\cdots0$, then $\pp'_{cand} \in I\hull{A}$ again by
Claim~\ref{c:QLk}~(ii) and we can again set $\pp' := \pp'_{cand}$. 

Finally, it remains to consider the case $(\pp - \qq)^{(2)} = 10\cdots0$. In
this case $\pp'_{cand} \in Q$ and we have to choose $\pp'$ differently. We
actually obtain $\pp - \qq = (n-1)10\cdots0$ since $\pp \covers \qq$.
Similarly, we obtain $\qq - \qq' = (n-1)10\cdots0$. We can then choose $\pp' :=
\qq' + (n-2)20\cdots0$. Then $\pp \covers \pp' \covers \qq'$ and $\pp' \in
I\hull{A}$ by Claim~\ref{c:QLk}. See Figure~\ref{f:ver_ef}.
\begin{figure}
\begin{center}
  \includegraphics{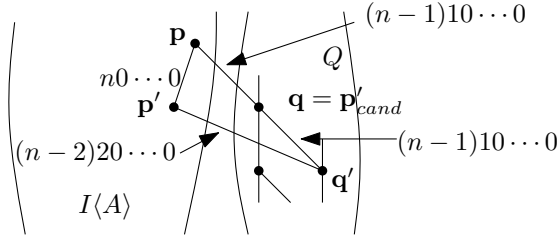}
\end{center}
\caption{The last case of the verification of the edge-falling property. Similarly
as in Figure~\ref{f:LS2_third}, the label of an edge $\ss\ttt$ is $\ttt - \ss$.}
\label{f:ver_ef}
\end{figure}

\medskip

We conclude by verifying condition~(iv) of Theorem~\ref{t:ci}. Let $\qq \in Q$,
we need to show that the poset $I(\qq)\hull{A(\qq)}$ is shellable where
$A(\qq)$ is defined as in the statement of the theorem. We observe
that this poset is isomorphic with $I(\qq)\hull{A(\qq)} \ominus \qq$, that is,
with $[\OO,\zz-\qq]\hull{A(\qq) \ominus \qq}$. Note that $\rk(\zz - \qq) <
\rk(\zz)$. Here we plan to use our assumption that Proposition~\ref{p:vn} is
valid for intervals $[\OO,\yy]$ with $\rk(\yy) < \rk(\zz)$, in particular, for
the interval $[\OO,\zz-\qq]$. Therefore, we want to determine $A(\qq) \ominus \qq$.

Let $\aa \in \Aall$, we want to determine, whether $\aa \in A(\qq) \ominus
\qq$. This is equivalent with determining whether $\qq + \aa \in A(\qq)$
and using the definition of $A(\qq)$ with determining whether $\qq + \aa \in 
I\hull{A}$
(assuming that $\qq + \aa \in I(\qq)$, otherwise $\aa \notin A(\qq)
\ominus \qq$).

If $\aa^+ \neq 201\cdots1$, we get that $\qq + \aa \in I\hull{A}$ if and only if
$\aa \in A^{(\ell + 1)}$ and $\qq + \aa \in I(\qq)$ by Claim~\ref{c:QLk}~(i). 
Therefore, we obtain the required shellability of $[\OO,\zz-\qq]\hull{A(\qq)
\ominus \qq}$ from assertion $(\As^{(\ell + 1)}_k)$
(with $k = |\A^{(\ell + 1)}|$) for the interval $[\OO,\zz-\qq]$.

If $\aa^+ = 201\cdots1$ and $(\qq - \aa^+)^{(2)} = 1\cdots0$, then $\qq + \aa^+ \in
I\hull{A}$ if and only if $\aa \in A^{(2)}$ and $\qq + \aa \in I(\qq)$ by
Claim~\ref{c:QLk}~(ii).  Therefore, we obtain the required shellability of
$[\OO,\zz-\qq]\hull{A(\qq) \ominus \qq}$ from assertion $(\As^{(2)}_k)$
(with $k = |\A^{(2)}|$) for the interval $[\OO,\zz-\qq]$.

If $\aa^+ = 201\cdots1$ and $(\qq - \aa^+)^{(2)} = 0\cdots0$, then $\qq + \aa^+ \in
I\hull{A}$ if and only if $\aa \in \Aall \setminus\{(n-1)10\cdots0,
n0\cdots0\}$ and $\qq + \aa \in I(\qq)$ by Claim~\ref{c:QLk}~(ii). Luckily,
$\Aall \setminus\{(n-1)10\cdots0, n0\cdots0\}$ is $\Aall$ minus the latest two
elements of $\Aall$ in the $<^L$ order. Therefore, we obtain the required
shellability of $[\OO,\zz-\qq]\hull{A(\qq) \ominus \qq}$ 
from assertion $(\As^L_k)$ (with $k = |\Aall| - 2$) for the interval
$[\OO,\zz-\qq]$.

This covers all cases when $\aa^+ = 201\cdots1$ by Claim~\ref{c:QLk}~(ii). Thus,
we have verified condition~(iv) of Theorem~\ref{t:ci} which concludes the proof
of the lemma.
\end{proof}

For the proof of the next lemma, we need the following extension of
Claim~\ref{c:a'}.

\begin{claim} \
  \label{c:a'l}
Let $\ell \in [n-1]$. Let $\aa \in \Aall$ such that $\aa \neq \aa^L_1$. Then there
is $\aa' \in A^{(\ell+1)}$ such that $\aa' <^L \aa$ and $\aa'
      \prec \aa + \jj$. In addition, we can assume $\aa' \neq 1\cdots102$.
\end{claim}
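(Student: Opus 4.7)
The plan is to reuse the construction of Claim~\ref{c:a'}~(i) and verify that, with only one genuinely new case, the resulting $\aa'$ already lies in $A^{(\ell+1)}$. Recall that in Claim~\ref{c:a'} we wrote $\aa = 0\cdots0\alpha_{i_0}\cdots\alpha_n$ with $i_0$ the smallest index satisfying $\alpha_{i_0} > 0$ (and $i_0 \leq n-1$ since $\aa \neq \aa^L_1$), and we used the shifted vector
$$\bb := 0\cdots0(\alpha_{i_0} - 1)\alpha_{i_0 + 1}\cdots\alpha_{n-1}(\alpha_n + 1).$$
The key observation is that the last coordinate of $\bb$ equals $\alpha_n + 1 \geq 1$, so $\bb^{(n)} \neq 0$, which immediately gives $\bb \in A^{(\ell+1)}$ for every $\ell \leq n-1$. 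Hence in the ``generic'' subcase where $\bb \neq \jj, 1\cdots102$ we may take $\aa' := \bb$ and the conclusion follows from Claim~\ref{c:a'}~(i). Similarly, in the case $\bb = 1\cdots102$ (i.e.\ $\aa = 21\cdots101$) the choice $\aa' = 1\cdots1201$ from Claim~\ref{c:a'} still has last coordinate $1$, so it lies in $A^{(\ell+1)}$.

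The only case requiring a new choice is $\bb = \jj$, that is $\aa = 21\cdots10$, for which Claim~\ref{c:a'} takes $\aa' = 1\cdots120$. This $\aa'$ has last coordinate $0$, so it lies in $A^{(\ell+1)}$ exactly when $\ell \leq n-2$, but fails for $\ell = n-1$. To cover $\ell = n-1$, I would instead set
$$\aa' := \underbrace{1\cdots1}_{n-3}\,0\,2\,1,$$
which is well defined because $n \geq 4$. A direct coordinatewise check shows $\aa'_1 = 1 < 2 = \aa_1$, hence $\aa' <^L \aa$; the coordinate sum of $\aa'$ equals $(n-3) + 0 + 2 + 1 = n$, so $\aa' \in \Aall$; the difference $\aa + \jj - \aa' = (2, 1, \ldots, 1, 2, 0, 0)$ differs from $\jj$ in its first coordinate, while $\aa' \leq \aa + \jj$ holds in every coordinate, so $\aa' \prec \aa + \jj$. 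Finally, $\aa'^{(n)} = 1$, so $\aa' \in A^{(n)}$, and clearly $\aa' \neq 1\cdots102$.

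There is no real obstacle here beyond some bookkeeping: all three conditions (comparison in $<^L$, strict inequality $\aa' \prec \aa + \jj$, and $\aa' \neq 1\cdots102$) are inherited from Claim~\ref{c:a'} in every case except $\aa = 21\cdots10$ with $\ell = n-1$, and that one exceptional case is handled by the single explicit substitution above.
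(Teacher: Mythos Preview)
Your proof is correct. Both your argument and the paper's rely on Claim~\ref{c:a'}~(i), but they are organized differently. The paper treats Claim~\ref{c:a'} as a black box: it takes the output $\bb''$, and whenever $\bb'' \notin A^{(\ell+1)}$ (which forces its last coordinate to be $0$) it uniformly shifts one unit from a smallest nonzero coordinate of $\bb''$ into the last coordinate, then checks that the resulting vector still satisfies all three conditions. You instead open up the three subcases inside the proof of Claim~\ref{c:a'} and observe that the constructions there already land in $A^{(n)} \subseteq A^{(\ell+1)}$ in all but the single case $\aa = 21\cdots10$ with $\ell = n-1$, which you patch explicitly with $\aa' = 1\cdots1021$. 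Your approach pinpoints the one genuine exception more sharply; the paper's is more modular and avoids reopening the earlier proof. Either way the verification is routine.
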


\begin{proof}
 By Claim~\ref{c:a'}~(i) we have $\bb'' \in \Aall$ (playing the role of $\aa'$
 in Claim~\ref{c:a'}) such that $\bb'' <^L \aa$
 and $\bb'' \prec \aa + \jj$ and $\bb'' \neq 1\cdots102$. If $\bb'' \in
 A^{(\ell+1)}$, then we set $\aa'
 := \bb''$ and we are done.

 If $\bb'' \notin A^{(\ell+1)}$, then $\bb'' := \beta_1\cdots\beta_{n-1}0$ for
 some $\beta_1,\dots,\beta_{n-1} \geq 0$. Let $i \in [n-1]$ be such that
 $\beta_i \neq 0$ and $\beta_i$ is as small as possible. We set $\aa' :=
 \beta_1\cdots\beta_{i-1}(\beta_i - 1)\beta_{i+1}\cdots\beta_{n-1}1$. We have
 that $\aa' \neq \jj$ due to our choice that $\beta_i$ is as small as
 possible. Thus $\aa' <^L \bb'' <^L \aa$. In addition $\aa' \prec \aa + \jj$
 since $\aa' \leq \aa + \jj$ ($\aa'$ is dominated by $\bb''$ in the first
 $n-1$  coordinates and dominated by $\jj$ in the last coordinate) and $\aa'
 \neq \aa$. Finally, $\aa' \in A^{(\ell+1)}$ and $\aa' \neq 1\cdots102$ since
 its last coordinate is $1$.
\end{proof}

\begin{proof}[Proof of Lemma~\ref{l:Sk}]
  The proof is similar to the proof of Lemma~\ref{l:Lk}. It is only slightly
  more technical, since the $<^S$ order is more complicated than the $<^L$
  order.
  
  We set $A := A^S_{k-1} \cap I$ and $A^+ := A^S_k \cap I$; we also set $\aa^+ =
  \aa^S_k$. Our task is to show that if $A^+$ is nonempty then $I\hull{A^+}$ is
  \ssh-shellable. 
  
  Similarly as in the proof of Lemma~\ref{l:Lk}, we derive that we can assume
  $\aa^+ \in I$, $A \cap I \neq \emptyset$ and therefore $I\hull{A}$ is
  \ssh-shellable from the assumptions of this lemma.

  Our task is to use Theorem~\ref{t:ci} for verifying \ssh-shellability of
  $I\hull{A^+}$. We set $Q := I\hull{A^+}\setminus I\hull{A}$. We need to verify
  assumptions of Theorem~\ref{t:ci}.

  We have already observed that item (i) of Theorem~\ref{t:ci} is satisfied;
  that is, that $I\hull{A}$ is \ssh-shellable.

  For verifying other items, we need more intrinsic definition of $Q$. We will
  assume that $\aa^+ = 0\cdots0\alpha_{\ell}\alpha_{\ell + 1}\cdots\alpha_n$
  where $\ell$ is the smallest integer such that $\alpha_\ell > 0$. Note that
  $\ell \leq n-1$ since $k \geq 3$.

\begin{claim}
\label{c:QSk}
  We have the following description of $Q$.
  \begin{enumerate}[(i)]
    \item
$Q = \{\qq \in I\hull{A^+}\colon \qq \succeq \aa^+, (\qq - \aa^+)^{(\ell+1)} =
  \underbrace{0\cdots0}_{n-\ell}
\}$ if $\aa^+ \neq 1\cdots102,\allowbreak 201\cdots1,\allowbreak 201\cdots102$;
    \item
      $Q = \{\qq \in I\hull{A^+}\colon \qq \succeq \aa^+, (\qq - \aa^+)^{(2)}\in\{\underbrace{0\cdots0}_{n-1},\underbrace{10\cdots0}_{n-1} \}\}$ if $\aa^+
      =201\cdots1$ or $\aa^+ = 201\cdots102$; and
    \item $Q = \{\aa^+\}$ if $\aa^+ = 1\cdots102$.
 \end{enumerate}
\end{claim}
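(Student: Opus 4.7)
The plan is to adapt the argument of Claim~\ref{c:QLk} to the $<^S$ order. As there, one fixes $\qq \succeq \aa^+$, writes $\qq - \aa^+ = \kappa_1\cdots\kappa_n$, and checks two inclusions in each case: if the stated coordinate condition fails, exhibit $\aa \in A$ with $\aa \preceq \qq$ (so $\qq \notin Q$); if it holds, check that no atom in $A$ lies below $\qq$. The new complication is that the set $A = A^S_{k-1} \cap I$ depends on which of the three blocks of the $<^S$ order contains $\aa^+$: the block $A^S$, the singleton $\{1\cdots102\}$, or the block $\Aall \setminus A^{(n)}$.

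For case~(i) the argument mirrors Claim~\ref{c:QLk}~(i). When $\aa^+ \in A^S$, the lower set $A$ is simply the $<^L$-lower prefix of $A^S$, and the construction in the proof of Claim~\ref{c:QLk}~(i) transfers directly, since the candidate atoms produced there already belong to $A^S$. When $\aa^+ \in \Aall \setminus A^{(n)}$, we additionally have $A^S \cup \{1\cdots 102\} \subseteq A$; however, the coordinate condition $(\qq - \aa^+)^{(\ell+1)} = 0\cdots 0$ combined with $\alpha_n = 0$ gives $\qq_n = 0$, automatically excluding every atom with nonzero last coordinate. Hence the candidates reduce to $\Aall \setminus A^{(n)}$, and the proof of Claim~\ref{c:QLk}~(i) applies inside this block. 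Case~(ii), where both $\aa^+ = 201\cdots 1$ and $\aa^+ = 201\cdots 102$ lie in block~(1), is handled analogously to Claim~\ref{c:QLk}~(ii); the candidate atom $021\cdots 1$ used there belongs to $A^S$, so the forward construction goes through unchanged, and the reverse direction is in fact easier since $A \subseteq A^S$.

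Case~(iii) ($\aa^+ = 1\cdots 102$) is where the argument departs most from Claim~\ref{c:QLk}. Here $A^+ = A^{(n)}$ and $A = A^S \cap I$; I must show that every $\qq \succ \aa^+$ lies above some atom of $A^S$. Writing $\uu = \qq - \aa^+$, a nonzero element of $\pinch_n$, I split on the rank of $\uu$. If $\uu \in A^S$, take $\aa := \uu$ and observe $\qq - \aa = \aa^+ \neq \jj$. If $\uu = 1\cdots 102$ or $\uu \in \Aall \setminus A^{(n)}$, invoke Claim~\ref{c:uv102} to obtain $\vv \in A^S$ with $\vv \prec \uu + \aa^+ = \qq$, and take $\aa := \vv$. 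If $\uu$ has rank $\geq 2$, then $\qq$ has sum $\geq 3n$ and large coordinates, so a small atom of $A^S$ (for instance $0\cdots 0n$, or a minor perturbation thereof when $\qq_n$ is small) can be placed beneath $\qq$ without creating a $\jj$-obstruction; any such $\aa$ lies automatically in $I$ since $\aa \preceq \qq \preceq \zz$. The main obstacle is this last sub-case of~(iii): verifying that for every possible rank-$\geq 2$ value of $\uu$ one can exhibit an atom of $A^S$ below $\qq$ while avoiding $\qq - \aa = \jj$. The hypothesis $n \geq 4$ together with the perturbation techniques already developed in Claims~\ref{c:uv102} and~\ref{c:a'} should supply the needed flexibility.
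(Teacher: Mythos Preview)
Your high-level plan---reduce to Claim~\ref{c:QLk} wherever possible and treat the three $<^S$ blocks separately---matches the paper's. Two of your reductions, however, do not go through as stated.

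\textbf{Case~(i), $\aa^+ \in A^S$.} The assertion that ``the candidate atoms produced there already belong to $A^S$'' is false. The candidate
\[
\aa := 0\cdots0(\alpha_\ell-1)\alpha_{\ell+1}\cdots(\alpha_i+1)\cdots\alpha_n
\]
from the proof of Claim~\ref{c:QLk}~(i) can equal $1\cdots102 \notin A^S$: for instance when $\aa^+ = 21\cdots101$ and $i = n$, or when $\aa^+ = 21\cdots1\,0\,1\cdots102$ with the inserted $0$ at position $i \in \{3,\dots,n-2\}$. The paper handles $\aa = 1\cdots102$ by a separate subcase exactly parallel to the $\aa = \jj$ subcase of Claim~\ref{c:QLk}~(i) (just replace the suffix $111$ by $102$): one passes to $\aa' = 201\cdots102$ when $i \geq 3$, and the remaining possibility $i = 2$ is precisely what forces $\aa^+ = 201\cdots102$ and puts you in case~(ii). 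Likewise, when $\aa + \jj = \qq$ one must invoke Claim~\ref{c:a'l} rather than Claim~\ref{c:a'} to ensure the replacement atom lies in $A^{(n)}\setminus\{1\cdots102\} = A^S$.

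\textbf{Case~(iii).} Your rank-based split is more complicated than necessary and, as you concede, incomplete for $\rk(\uu) \geq 2$. The paper instead re-uses the same dichotomy $\kappa_2\cdots\kappa_n = 0$ versus $\neq 0$ already in place for all other $\aa^+$. When $\kappa_2\cdots\kappa_n \neq 0$, the generic candidate $\aa$ has first coordinate $\alpha_1 - 1 = 0$ and last coordinate $\geq 2$, so automatically $\aa \in A^S$; no special sub-subcases arise here at all. When $\kappa_2\cdots\kappa_n = 0$, one has $\qq = (rn+1)1\cdots102$ with $r \geq 1$, and the single explicit atom $\aa = 21\cdots101 \in A^S$ satisfies $\aa \prec \qq$. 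No rank induction, no appeal to Claim~\ref{c:uv102}, and no perturbation argument is needed.
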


 Note that we crucially use that $n \geq 4$ in order that this claim makes
 sense; that is, we use that $20\underbrace{1\cdots1}_{n-4}02$ belongs to
 $\pinch_n$.

\begin{proof}
  The proof is similar to the proof of Claim~\ref{c:QLk}; however, in this
  proof there are more cases to consider. Keeping in mind the
  number of cases we want to consider, we use slightly different approach how
  to treat them, compared to Claim~\ref{c:QLk}.

  We assume that we are given $\qq$ such that $\qq \succeq \aa^+$ (this is a
  necessary condition for $\qq \in Q$).
  We let $\qq - \aa^+ =
  \kappa_1\cdots\kappa_n$. If $\aa^+ \notin \{1\cdots102,\allowbreak
    201\cdots1,\allowbreak 
  201\cdots102\}$ we want to
  verify that $\qq \in Q$ if and only if $\kappa_{\ell+1} = \cdots = \kappa_n =
  0$. If $\aa^+ \in \{1\cdots102, 201\cdots102\}$ we want to
  verify that $\qq \in Q$ if and only if $\kappa_2 \in \{0,1\}$ and $\kappa_3
  = \cdots = \kappa_n = 0$. If $\aa^+ = 1\cdots102$, we want to verify that $\qq
  \in Q$ if and only if $\qq = \aa^+$.

  First, we distinguish cases according to whether
  $\kappa_{\ell+1}\cdots\kappa_{n} = 0\cdots0$ (note that we also cover $\aa^+
  \in \{1\cdots102, 201\cdots1, 201\cdots102\}$ by setting $\ell = 1$ in these cases).

  \begin{enumerate}
    \item $\kappa_{\ell+1}\cdots\kappa_{n} \neq 0\cdots0$.
      In this case we have $i \in \{\ell + 1, \cdots, n\}$ such that $\kappa_i
      > 0$. We prefer $i \neq 2$, if possible. We set
      $$
       \aa := 0\cdots0(\alpha_{\ell} - 1)\alpha_{\ell +
       1}\cdots\alpha_{i-1}(\alpha_i + 1)\alpha_{i+1}\cdots\alpha_n.
      $$
     Note that if $\aa \neq \jj$, then $\aa$ precedes $\aa^+$ in the $<^L$
     order. (In fact, $\aa$ precedes $\aa^+$ in the lexicographic order in any
     case, but we do not define the $<^L$ order for $\jj$.) Note also that $\aa^+
     < \qq$. In some cases, we will manage to show that
     $\aa \neq \jj$, $\aa <^S \aa^+$ and $\aa + \jj \neq \qq$. This will imply
     that $\aa \in A$ and $\aa \prec \qq$ and therefore $\qq \not \in
     Q$. In some other cases we will replace $\aa$ with another $\aa'$
     satisfying the above-mentioned conditions still deriving $\qq \notin Q$.
     However, this will be impossible if $\aa^+ \in \{1\cdots102,
     201\cdots102\}$, $i = 2$ and $\kappa_2 = 1$ when we will actually derive
     that $\qq \in Q$.

     Now we distinguish several subcases according to $\aa^+$.

     \begin{enumerate}
       \item
	 $\aa^+ \in A^{(n)} = A^S \cup \{1\cdots102\}$.

	 Before we start, we remark that all considerations are also valid if
	 $\aa^+ = 1\cdots102$. The atom $1\cdots102$ is the last atom of
	 $A^{(n)}$ in the $<^S$ order. This will reflect in such a way, that in
	 some cases we check for $\aa^+ = 1\cdots102$ more than we need (which is
	 not a big price for a coherent case analysis). 
	 
	 We have that $\aa$ precedes $\aa^+$ in the $<^S$ order unless $\aa \in
	 \{\jj,\allowbreak 1\cdots102\}$. Therefore, for the beginning we assume that
	 $\aa \notin \{\jj, 1\cdots102\}$. If, in addition, $\aa + \jj \neq
	 \qq$, then we have the required properties of $\aa$ deriving $\qq
	 \notin Q$. However, if $\aa + \jj = \qq$, then we obtain $\aa' \neq
	 1\cdots102$ of required properties by Claim~\ref{c:a'l} (or by
	 Claim~\ref{c:a'}~(ii) if $\aa = 0\cdots0n$).

	 If $\aa = \jj$, then $\aa^+ = 21\cdots101\cdots1$ where the `0' appears
	 in the $i$th position. 
	 
	 We distinguish subsubcases according to $i$.
	 
	 \begin{enumerate}
	   \item
	     $i \geq 3$.
	     
	     In this situation we set $\aa' = 201\cdots1$. Then $\qq > \aa'$
	     (since $\kappa_i \geq 1$ and $\qq \succ \aa^+$) and $\aa'$ precedes
	     $\aa^+$ in the $<^L$ order and therefore in $<^S$ order as well. Therefore, $\aa'$ has the required
	     properties unless $\qq = \aa' + \jj = 312\cdots2$. 
	     In this case, we can use $1\cdots1201 \prec \qq$, for example.

	   \item
	     $i = 2$.

	     In this situation $\aa^+ = 201\cdots1$. We also have $\kappa_3 =
	     \cdots = \kappa_n = 0$ since we wanted $i \neq 2$ if possible.
	     
	     If $\kappa_2 \geq 2$, implying $\qq \geq 221\cdots1$,
	     we still can set $\aa' = 021\cdots1$ deriving $\qq \notin Q$ (note
	     that $\qq \neq \aa + \jj$ since $\kappa_n = 0$).

	     If $\kappa_2 = 1$, we actually want to derive $\qq \in Q$
	     according to our description. In this case, it is easiest to refer
	     to Claim~\ref{c:QLk}~(ii) (since we have already done this
	     analysis). The claim implies that there is no $\aa \in \Aall$
	     such that $\aa <^L \aa^+$ and $\aa \preceq \qq$. In particular,
	     there is no such $\aa \in A^{(n)}$. Since $\aa <^L \aa^+$ is
	     equivalent with $\aa <^S \aa^+$ in this case, we deduce $\qq \in Q$.

         \end{enumerate}
	 
	 If $\aa = 1\cdots102$, then we can perform the same analysis as if
	 $\aa = \jj$ just replacing the suffix $111$ with $102$. (The only
	 major difference is that we cannot use the shortcut referring to
	 Claim~\ref{c:QLk}.) Here the analysis follows in detail.

	 We have $\aa^+ = 21\cdots101\cdots102$ where the first `0' appears
	 in the $i$th position or $\aa^+ = 21\cdots101$ if $i = n$. (In particular
	 $i \neq n-1$.)
	 
	 We distinguish subsubcases according to $i$.
	 
	 \begin{enumerate}
	   \item
	     $i \geq 3$.
	     
	     In this situation we set $\aa' = 201\cdots102$. Then $\qq > \aa'$
	     (since $\kappa_i \geq 1$ and $\qq \succ \aa^+$) and $\aa'$ precedes
	     $\aa^+$ in the $<^L$ order (hence in $<^S$ order as well). 
	     Therefore, $\aa'$ has the required
	     properties unless $\qq = \aa' + \jj = 312\cdots213$. 
	     In this case, we can use $1\cdots1201 \prec \qq$, for example.

	   \item
	     $i = 2$.

	     In this situation $\aa^+ = 201\cdots102$. We also have $\kappa_3 =
	     \cdots = \kappa_n = 0$ since we wanted $i \neq 2$ if possible.
	     
	     If $\kappa_2 \geq 2$, implying $\qq \geq 221\cdots102$,
	     we still can set $\aa' = 021\cdots102$ deriving $\qq \notin Q$ (note
	     that $\qq \neq \aa + \jj$ since $\kappa_n = 0$).

	     If $\kappa_2 = 1$, we actually want to derive $\qq \in Q$
	     according to our description. In this case $\qq = (r\cdot n +
	     1)1\cdots102$ for some positive integer $r$. 
	     We want to show that there is no $\aa \in A^S$
	     such that $\aa <^S \aa^+$  and $\aa \prec \qq$. For contradiction,
	     there is such $\aa = \alpha'_1\cdots\alpha'_n$. Condition $\aa
	     <^S \aa^+$ implies $\alpha'_1 \leq 2$.
	     Since the sum of the last $(n-1)$ coordinates of $\qq$ equals $n - 1$, 
	     we derive either that $\alpha'_1 = 1$ and $\aa$ agrees with
	     $\qq$ on all remaining $n-1$ coordinates or that $\alpha'_1 = 2$
	     and $\aa$ agrees with $\qq$ on all remaining $n-1$ coordinates
	     except one coordinate, where it is one less. The first case is
	     excluded since $1\cdots102 \not<^S \aa^+$. The second case is also
	     excluded, since in such a case $\aa \not<^L \aa^+$, implying $\aa
	     \not<^S \aa^+$. A contradiction. We conclude that $\qq \in Q$ if
	     $\kappa_2 = 1$.
	     
	   \end{enumerate} 

       \item
	 $\aa^+ \in \Aall \setminus A^{(n)}$.

	 In this case, $\alpha_n = 0$. We also emphasize that $\aa$ precedes
	 $\aa^+$ in the $<^S$ order if $\aa \neq \jj$. This is simply 
	 because, $\aa$ precedes $\aa^+$ in the $<^L$ order and $\aa^+ \notin
	 A^{(n)}$ in this case. Therefore we derive $\qq \notin Q$ if $\aa
	 \neq \jj$ and $\aa + \jj \neq \qq$.

	 Now, let us consider the case $\aa \neq \jj$, but $\aa + \jj = \qq$.
	 Then there is $\aa'$ from Claim~\ref{c:a'}~(i) or~(ii) such that 
	 $\aa' <^L \aa^+$, $\aa \prec \qq$. We derive $\aa' <^S \aa^+$, and
	 therefore $\qq \notin Q$.

	 Finally, we consider the case $\aa = \jj$. We derive $\aa^+ =
	 21\cdots10$ and $i = n$ (since $\alpha_n = 0$). We set $\aa' :=
	 21\cdots101$ or $\aa' := 21\cdots1011$ so that $\aa' + \jj \neq
	 \qq$. We derive $\aa' \prec \qq$, $\aa' <^S \aa^+$, and therefore $\qq
	 \notin Q$ as desired.

     \end{enumerate}

    \item $\kappa_{\ell+1}\cdots\kappa_{n} = 0\cdots0$.

    In this case we want to derive $\qq \in Q$ for 
    all possible choices of $\aa^+$ except $\aa^+ = 1\cdots102$ and $\qq \succ
    \aa^+$.
 
    We distinguish subcases according to $\aa^+$.

    \begin{enumerate}
      \item $\aa^+ \in A^S$. 
	
	In this case we refer to Claim~\ref{c:QLk} which
	implies that there is no $\aa \in \Aall$ such that $\aa <^L \aa^+$ and
	$\aa \prec \qq$. Therefore, in particular, there is no $\aa \in
	A^S$ with $\aa <^S \aa^+$ and $\aa \prec \qq$ which is what we need.
      \item $\aa^+ = 1\cdots102$.

	If $\qq = \aa^+$, then $\qq \in Q$ as desired.

	If $\qq \succ \aa^+$, then $\qq = (rn+1)1\cdots102$ for some integer $n$.
	Setting $\aa = 21\cdots101$ we get $\aa \in A^{(n)}$ implying 
	$\aa <^S \aa^+$ and also $\aa \prec \qq$. Thus $\qq \notin Q$ as
	required.
     \item $\aa^+ \in \Aall \setminus A^{(n)}$.

       By Claim~\ref{c:QLk} there is no $\aa \in \Aall$ such that $\aa <^L
       \aa^+$ and $\aa \prec \qq$. Therefore, in particular, there is no $\aa
	\in \Aall \setminus A^{(n)}$ with $\aa <^S \aa^+$ and $\aa \prec \qq$.

      On the other hand, there is no $\aa \in A^{(n)}$ with $\aa <^S \aa^+$ and
      $\aa \prec \qq$ either, because $\alpha_n = \kappa_n = 0$ implying that
      the last coordinate of $\qq$ is $0$ whereas $\aa$ from $A^{(n)}$ has the
      last coordinate nonzero.

      Altogether, there is no $\aa \in \Aall$ with $\aa <^S \aa^+$ and $\aa
      \prec \qq$ implying $\qq \in Q$.

    \end{enumerate}

  \end{enumerate}
This finishes the proof of the claim.
\end{proof}

Now we verify condition~(ii) of Theorem~\ref{t:ci}. However, the verification
is almost the same as in case of Lemma~\ref{l:Lk} using Claim~\ref{c:QSk}
instead of Claim~\ref{c:QLk}. This is because of the described structure of $Q$.
(Compare with the text below the proof of Claim~\ref{c:QLk}.)

If $\aa \notin\{1\cdots102, 201\cdots1, 201\cdots102\}$, then we just use
Proposition~\ref{p:non_pinch}. If $\aa \in \{201\cdots1, 201\cdots102\}$, then
we obtain shellability of $Q$ referring to Figure~\ref{f:Q(ii)}. Finally, if
$\aa = 1\cdots102$ then the verification is trivial, since a poset with single
element is shellable.

\medskip

We continue with the verification of condition~(iii) of Theorem~\ref{t:ci}; that is,
we verify the edge-falling property. If $\aa^+ \neq 1\cdots102$, then again 
this verification can be taken in verbatim
from the analogous verification in the proof of Lemma~\ref{l:Lk} using
Claim~\ref{c:QSk} instead of Claim~\ref{c:QLk}, considering cases according to
structure of $Q$. We therefore do not repeat the relevant text
again.

If $\aa^+ = 1\cdots102$, then the verification of the edge falling property is
somewhat trivial. In this case $Q = \{\aa^+\}$ by Claim~\ref{c:QSk}. Therefore,
we are supposed to verify that if $\qq = \aa^+$, $\qq' = \OO$, and $\pp \in
I\hull{A}$ is such that $\pp \covers \qq$, then there is $\pp' \in A$ covering $\OO$ and
covered by $\pp$. But this just immediately follows from $\pp \in
I\hull{A}$ since $\rk(\pp) = 2$.

\medskip

We conclude by verifying condition~(iv) of Theorem~\ref{t:ci}. We again refer
that if $\aa^+ \neq 1\cdots102$, then this verification is already done in the
proof of Lemma~\ref{l:Lk}. It again solely depends on the structure of $Q$.

If $\aa^+ = 1\cdots102$, then we are just supposed to check that the interval
$[\aa^+,\zz]$ is shellable. This follows from the assumptions of this lemma,
since it is isomorphic to $[\OO, \zz - \aa^+]$.
  
\end{proof}

\begin{proof}[Proof of Lemma~\ref{l:Al}]
  First we observe that it is sufficient to prove the lemma for case $k =
  |\Aall|$ since an \lsh-shelling of $I\hull{A_{j+1}^{(\ell + 1)} \cap I}$
  restricts to an \lsh-shelling of $I\hull{A_{j}^{(\ell + 1)} \cap I}$.
  Therefore, in case $k = |\Aall|$ we just aim to show that $I\hull{\Aall \cap
  I}$ is \lsh-shellable.
  
  We plan to use Theorem~\ref{t:ciii} for the proof of this lemma where we set
  $A := \Aall \cap I$ and $A' := A^{(\ell + 1)} \cap I$.

  Condition~(i) of Theorem~\ref{t:ciii} follows from the assumptions of
  the lemma.

  For checking condition~(ii), we have $\bb \in I \cap (\Aall \setminus
  A^{(\ell+1)})$ and $\pp \in I\hull{A^{(\ell+1)} \cap I}$ covering $\bb$. We need
  to find $\bb' \in A^{(\ell+1)} \cap I$ such that $\pp \covers \bb'$ and $\bb'
  <^L \bb$. Actually, we will only check $\bb' \in A^{(\ell+1)}$, $\pp \covers
  \bb'$, and $\bb' <^L \bb$ since $\pp \covers \bb'$ implies $\bb' \in I$.

  We have that $\bb = \beta_1\cdots\beta_{\ell}0\cdots0$ since $\bb \notin
  A^{(\ell+1)}$. On the other hand, if we let $\pp = \pi_1\cdots\pi_n$, then there
  is $j \in \{\ell+1,\dots,n\}$ such that $\pi_j > 0$ since $\pp \in
  I\hull{A^{(\ell+1)} \cap I}$. Let also $i \in [\ell]$ be such an index that
  $\beta_i > 0$ and $\beta_i$ is as small as possible. We set the following
  candidate for $\bb'$.
  $$
  \bb'_{cand} := \beta_1\cdots\beta_{i-1}(\beta_i -
  1)\beta_{i+1}\cdots\beta_{\ell}0\cdots010\cdots0,
  $$
  where the `1' appears on the $j$th position. We have $\bb'_{cand} \leq \pp$.
  We also have $\bb'_{cand} \neq \jj$; this is obvious if $\ell \neq n-1$ and
  it follows from our choice of $\beta_i$ if $\ell = n-1$. In particular
  $\bb'_{cand} <^L \bb$ and $\bb'_{cand} \in A^{(\ell+1)}$. 
  If $\bb'_{cand} + \jj \neq \pp$, then $\bb'_{cand} \prec \pp$ and
  consequently $\pp \covers \bb'_{cand}$ (by comparing ranks). 
  Thus, we can simply set $\bb' := \bb'_{cand}$ in this case.

  If $\bb'_{cand} + \jj = \pp$, we think of $\bb'_{cand}$ as $\aa$ from
  Claim~\ref{c:a'l}. We obtain the corresponding $\aa'$ and we just set $\bb'
  := \aa'$.

\end{proof}

\section{Relation of lexicographic shellability and $A$-shell\-ability.}
\label{s:lex}
\subsection{Lexicographic shellability}

Here we briefly recall the definition of lexicographic shellability. The reader
interested in more details (including examples) is referred to sources such
as~\cite{bjorner-wachs82, bjorner-wachs83, kozlov97, kozlov08}. The reader familiar with lexicographic shellability can skip this subsection.

As usual, we let $(P, \leq)$ be a graded poset (with a unique minimal and
maximal
element), using the notation from Section~\ref{s:crit}.
Given a maximal chain $c \in C(P)$ we label all of its edges with elements of
some poset $\Lambda$ (typically, $\Lambda = \Z$). In this way we label edges of
all maximal chains in $C(P)$ (that is, a label of an edge might differ if we
start with two different chains). We obtain a \emph{chain-edge labeling} of
$P$ if the following condition is satisfied. Whenever $c, c' \in C(P)$ are two
chains sharing first $k$ edges (for some $k$), then the labels of these first
$k$ edges have to coincide. Let us assume that $P$ is equipped with a fixed
chain-edge labeling.

A \emph{rooted interval} $[x,y]_r$ is an interval in $P$ where the root $r$ of
this interval is a maximal chain in the interval $[\hat 0, x]$. Given a maximal
chain $c_0$ in $C([x,y])$ we obtain (with respect to $r$) a labeling of edges of $c_0$ induced from
the labeling of a maximal chain $c' \in C[x,y]$ obtained by composing $r$,
$c_0$ and an arbitrary maximal chain in interval $[y, \hat 1]$. This labeling
does not depend on the choice of the chain in $[y,\hat 1]$ due to the
definition of chain-edge labeling. In the sequel, we consider the labeling of
$c_0$ as a sequence of $\rk(y) - \rk(x)$ elements of $\Lambda$. 
In particular, we can say that $c_0$ is increasing (in $[x,y]_r$) 
if its labeling is increasing and $c_0$ is lexicographically smaller than another maximal chain $c_1$ in $C([x,y])$ if the labeling of $c_0$ is lexicographically smaller then
the labeling of $c_1$.

We say that an chain-edge labeling is \emph{CL-labeling} (chain-lexicographic
labeling) if for every rooted interval $[x,y]_r$ in $P$ 
the following two conditions are satisfied.
\begin{enumerate}[(i)]
  \item There is a unique maximal increasing chain $c_0$ in $[x,y]_r$; and
  \item $c_0$ is lexicographically smaller than any other maximal chain in
    $[x,y]_r$.
\end{enumerate}
The poset $P$ is \emph{(chain-)lexicographically
shellable}, abbreviated as CL-shellable, if it admits CL-labeling.

It follows from~\cite{bjorner-wachs82} that every CL-shellable poset is
indeed shellable. Actually, the order of shelling is given by the lexicographic
order of chains in $C(P)$ (with respect to given CL-labeling).
The converse is not true---there are posets which are shellable but not
lexicographically shellable; see~\cite{vince-wachs85, walker85}.

\subsection{Lexicographic shellability versus $A$-shellability}

In this subsection we want to compare $A$-shellability and lexicographic
shellability. This comparison make sense if $A = \Aall$ is the set of all
atoms. In addition, we also assume that $\Aall$ is linearly ordered. (If we
allow arbitrary partial order on $\Aall$, then, for example, we can allow all
elements incomparable; then $\Aall$-shellability just coincides with
shellability.) 

\heading{Lexicographic shelling is an $\Aall$-shelling.}
Let $P$ be a CL-shellable poset and let us fix a $CL$-labeling
of it. Given an atom $a$ of $P$ we observe that the edge $e_a = \hat 0a$ is
labeled the same way in all maximal chains containing $e_a$ (by the definition
of chain-edge labeling). Thus, we can denote by $\lambda(e_a)$ this label of
$e_a$. By condition (ii) of the definition of CL-labeling we have that
$\lambda(e_a)$ and $\lambda(e_{a'})$ differ for two different atoms $a$ and
$a'$, and in addition they are comparable with $\Lambda$. Thus these labels
induce a linear ordering $\leq_\lambda$ on $\Aall$.
In this setting, the CL-shelling of $P$ is also an $\Aall$-shelling
of $P$ (where $\Aall$ is equipped with $\leq_\lambda$).

\heading{$\Aall$-shelling which is not lexicographic shelling.}
It is not hard to come up with an example of an $\Aall$-shelling which is not
a CL-shelling.
%
%
Let $P'$ be
a poset which is shellable but not CL-shellable. Let us consider $k$ copies $\hat 0_1, \dots,
\hat 0_k$ of the minimal element in $P'$. The poset $P$ is obtained by replacing
the minimal element of $P'$ by these $k$ copies and then adding a new minimal
element $\hat 0_{new}$ smaller than everything else. Note that $\Aall = \{\hat
0_1, \dots, \hat 0_k\}$.

It is not hard to check that $P$ is $\Aall$-shellable where $\Aall$ is equipped
with an arbitrary linear order (either by elementary means or by using
Theorem~\ref{t:ci}). On the other hand, $P$ is not CL-shellable
since $P$ contains an interval isomorphic to $P'$ and all intervals in a
CL-shellable poset are CL-shellable as well.

\subsection{Recursive atom orderings}
Bj\"{o}rner and Wachs~\cite{bjorner-wachs83} gave an equivalent reformulation
of CL-shellability using recursive atom orderings. It is useful to compare
$A$-shellability and recursive atom orderings. We first repeat their definition.

A poset $P$ (graded, with a unique minimum and maximum) 
admits a \emph{recursive atom ordering} if it has length 1
or if the length of $P$ is grater than 1 and there is an ordering $a_1, \dots,
a_t$ of all the atoms of $P$ which satisfies:

\begin{enumerate}[(R1)]
\item For all $k \in [t]$ the interval $[a_k,\hat 1]$ admits a recursive atom
  ordering in which the atoms of $[a_k,\hat 1]$ that come first in the ordering
  are those that cover some $a_i$ where $i < k$.
\item For all $i < k$, if $a_i, a_k < y$, then there is $j < k$ and an element
  $z$ such that $a_j,a_k \covered z \leq y$.
\end{enumerate}

Bj\"{o}rner and Wachs~\cite{bjorner-wachs83} proved that a poset is
CL-shellable if and only if it admits a recursive atom ordering.

In our notation a recursive atom ordering induces an ordering of $\Aall$.
From this point of view, recursive atom orderings are very strongly related 
to our second criterion, Theorem~\ref{t:cii}. Let us assume that condition~(i)
of Theorem~\ref{t:cii} is satisfied in slightly stronger form, that is, we
assume that $P\hull{A}$ admits a recursive atom ordering (which induces
an $A$-shelling). Similarly, let us assume that we can replace
$\Aall(a^+)$-shellability of $I(a^+)$ with a recursive atom ordering on
$I(a^+)$ inducing $\Aall(a^+)$-shellability. Then we can deduce that
$P\hull{A^+}$ admits a recursive atom ordering:

Indeed condition~(R1) translates to condition~(ii) of Theorem~\ref{t:cii} (it
is sufficient to check (R1) only for $a_k = a^+$ since we already assume that
$P\hull{A}$ admits a recursive atom ordering). Similarly we will check that 
condition~(R2) translates to condition~(iii) of Theorem~\ref{t:cii}. 
Given $a_i$, $a_k$ and $y$ from (R2) we can again assume that $a_k = a^+$. We
choose a maximal chain $c$ in $[a^+,y]$ and set $p$ to be the smallest element
of $c$ belonging to $P\hull{A}$ (note that $y \in P\hull{A}$ since $a_i < y$);
see Figure~\ref{f:rao}.
Then we can set $q$ to be the element of $c$ one rank below $p$. Then,
by assuming~(iii) of Theorem~\ref{t:cii}, $p$ is above some $z \in A(a^+)$.
This is the required $z$ since $z \in A(a^+)$ implies that $z$ covers some atom
$a_j$ preceding $a_k$.

\begin{figure}
\begin{center}
  \includegraphics{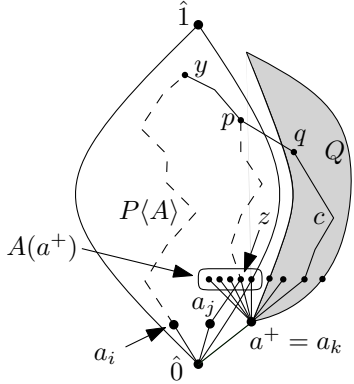}
  \caption{Condition~(iii) of Theorem~\ref{t:cii} follows from (R2). We also
    recommend to compare
  this picture with Figure~\ref{f:cii}.}
  \label{f:rao}
\end{center}
\end{figure}

Altogether, we see that the method using $A$-shellability includes the
recursive atom ordering method. On the other hand, it is not hard to see, that
if we were allowed to use only Theorem~\ref{t:cii}, we would not get more than
recursive atom orderings. However, Theorem~\ref{t:cii} is still more flexible
since, for example, it does not need to assume that $P\hull{A}$ comes with a
recursive atom ordering. This is useful, when it is combined with
Theorem~\ref{t:ci}.

\subsection{Lexicographic shellability versus Theorem~\ref{t:ci}.}
Now we compare our first criterion, Theorem~\ref{t:ci}, to lexicographic
shellability (in this case it is more natural to choose lexicographic
shellability rather than recursive atom orderings). In this case,
Theorem~\ref{t:ci} seems to be in more `generic' position in relation with
lexicographic shellability.

\heading{CL-shellable poset which does not satisfy assumptions
of Theorem~\ref{t:ci}.} First we provide an example of a poset that is
CL-shellable, but which does not satisfy assumptions of
Theorem~2, with respect to a given CL-shelling. This example 
arose in discussions with Afshin Goodarzi.

Let $P$ be the poset from Figure~\ref{f:lex}. It is lexicographically shellable:
we first label edges as on picture; and then we label chains according to
labels of edges. The reader is welcome to check that we indeed obtain a
CL-labeling. (Actually, we obtain a so called \emph{EL-labeling} where, in
addition, the label of an edge does not depend on the considered chain.) 
Note also, that chains containing $a$ appear before chains containing $a^+$ 
in the corresponding lexicographic shelling. In particular, $P$ is
$A^+$-shellable where $A^+ := \{a, a^+\}$ and $a$ appears before $a^+$.

\begin{figure}
\begin{center}
  \includegraphics{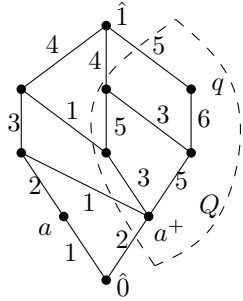}
\end{center}
\caption{Lexicographically shellable poset which does not satisfy assumptions
of Theorem~\ref{t:ci}.}
\label{f:lex}
\end{figure}

On the other hand, if we intend to use Theorem~\ref{t:ci} for showing
$A^+$-shellability of $P$, we will not succeed. The condition (iii) (edge
falling property) is not satisfied for the edge $q\hat1$. 

\heading{Theorem~\ref{t:ci} does not provide a CL-shelling.}
Let us imagine that we replace our shellability assumptions in
Theorem~\ref{t:ci} by CL-assumptions. That is, for
condition~(i) we would assume that $P\hull{A}$ is CL-shellable
(and the corresponding CL-shelling is $A$-shelling as well); and for
condition~(iv) we would assume that $I(q)\hull{A(q)}$ is CL-shellable. Does it
follow that $P\hull{A^+}$ is CL-shellable?

The author does not know the answer to this question; but it seems that the more
probable answer is `no'. If the answer is indeed `no', then this would mean
further difference in applicability of Theorem~\ref{t:ci} and CL-shellability
(or even more general CC-shellability of Kozlov~\cite{kozlov97} as remarked
below). However, even if the answer is `yes', Theorem~\ref{t:ci} still
provides particular conditions that might possibly be checked in an easier way
than establishing CL-labeling (or establishing recursive atom ordering).

Here, we can at least provide a simple example showing that the current proof of
Theorem~\ref{t:ci} does not provide CL-shelling even if we ask CL-shelling assumptions. Let $P$ be the poset on
Figure~\ref{f:non_lex_order}. If we set $a^+$ as in the picture, we can easily
check that all assumptions of Theorem~\ref{t:ci} are satisfied even with
lexicographic assumptions. We label elements of $Q$ as $q_1, \dots, q_5$
consistently with the proof of Theorem~\ref{t:ci}. Then the proof provides
shelling such that the chains $\hat0q_1q_2q_4\hat1$, $\hat0q_1q_3q_4\hat1$,
$\hat0q_1q_2q_5\hat1$, and $\hat0q_1q_3q_5\hat1$ appear in this order; consult also
Figure~\ref{f:i_order}. This cannot be a CL-shelling due to the
alternation of edges $q_1q_2$ and $q_1q_3$. (The reader familiar with Kozlov's
CC-shellability~\cite{kozlov97} is welcome to check that this is not even a
CC-shelling.) 
\begin{figure}
\begin{center}
  \includegraphics{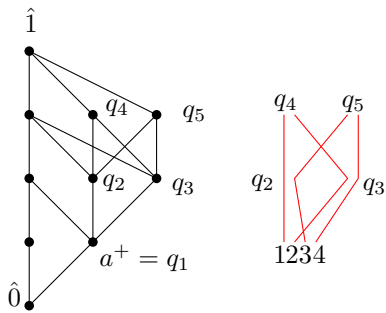}
\end{center}
\caption{Theorem~\ref{t:ci} does not produce a lexicographic shelling of
this poset.}
\label{f:non_lex_order}
\end{figure}

\section*{Acknowledgment}
I am very thankful to Anders Bj\"{o}rner, Aldo Conca and Afshin Goodarzi for
valuable discussions. In addition, I further thank to Anders Bj\"{o}rner
for suggesting this topic for research and reading preliminary versions of this
paper, Aldo Conca for letting me know that
Corollary~\ref{c:koszul} follows from results in~\cite{concaherzogtrungvalla97}
and Volkmar Welker for
pointing out the reference~\cite{peeva-reiner-sturmfels98}.
Finally, I thank two anonymous referees for many valuable remarks.
\bibliographystyle{alpha}
\bibliography{/home/martin/clanky/bib/general}

\end{document}